\def\di{\displaystyle}
\newcommand{\R}{\mathbb{R}}
\newcommand\del[1]{}
\newcommand\think[1]{}
\newcommand\new[1]{}
\newcommand\zus[1]{}
\newcommand\comd[1]{} 
\newcommand\Redd[1]{} 
\def\bdm{\begin{displaymath}}
\def\edm{\end{displaymath}}
\def\bea{\begin{eqnarray}}
\def\eea{\end{eqnarray}}
\newtheorem{theorem}{Theorem}[section]
\newtheorem{lemma}[theorem]{Lemma}
\newtheorem{definition}[theorem]{Definition}
\newtheorem{proposition}[theorem]{Proposition}
\newtheorem{corollary}[theorem]{Corollary}
\newtheorem{example}{Example}
\newtheorem{remark}{Remark}
\begin{document}

\title[A stochastic Landau-Lifshitz equation]{Selection of a stochastic Landau-Lifshitz equation and the stochastic persistence problem}

\author{Jacky Cresson, Yasmina Kheloufi, Khadra Nachi, Frederic Pierret}
\address{Laboratoire de Math\'ematiques Appliqu\'ees de Pau, 
 Universit\'e de Pau et des Pays de l'Adour, 
Avenue de l'Universit\'e, BP 1155,
64013 Pau Cedex, France
\&
Syst\`emes de R\'ef\'erence Temps-Espace - UMR CNRS 8630, Observatoire de Paris,
61 avenue de l'Observatoire,
75014 Paris, France
}
\email{jacky.cresson@univ-pau.fr}
\address{Universit\'e d'Es-S\'enia, Oran -- D\'epartement de math\'ematiques, Alg\'erie}
\email{kheloufiyasmina@ymail.com}
\subjclass{60H10; 92B05; 60J28; 65C30}
\keywords{stochastic differential equations, model validation, Landau-Lifshitz equation, It\^o equations, ferromagnetism}

\begin{abstract}
In this article, we study the persistence of properties of a given classical deterministic differential equation under a stochastic perturbation of two distinct forms: external and internal. The first case corresponds to add a noise term to a given equation using the framework of It\^o or Stratonovich stochastic differential equations. The second case corresponds to consider a parameters dependent differential equations and to add a stochastic dynamics on the parameters using the framework of random ordinary differential equations. Our main concerns for the preservation of properties is stability/instability of equilibrium points and symplectic/Poisson Hamiltonian structures. We formulate persistence theorem in these two cases and prove that the cases of external and internal stochastic perturbations are drastically different. We then apply our results to develop a stochastic version of the Landau-Lifshitz equation. We discuss in particular previous results obtain by Etore and al. in \cite{Etore} and we finally propose a new family of stochastic Landau-Lifshitz equations.
\end{abstract}

\maketitle

\tableofcontents

\section{Introduction}

\subsection{Stochastic extension of a deterministic model}

In an increasing number of domains, scientists need to take into account {\bf stochastic effects} in a problem which were first modeled in a {\bf deterministic} way. The reason is in general due to new experimental data which show a random behavior. This is the case for example in the study of the mechanisms underlying the Hodgkin-Huxley model describing the bioelectrical dynamics of neurons or the random behavior of the flattening of the earth. The previous situation covers different modeling problems. Essentially, we can distinguish three types:
\begin{itemize}
\item A parameter entering in the deterministic model which was assumed to be a constant has a random behavior.

\item A new mechanism which is random in nature enters in the description of the model.

\item The initial model is under the influence of external random fluctuations.
\end{itemize}

The third case is the classical way to take into account the {\bf environment} of the phenomenon which is observed. This situation corresponds to what we call an {\bf external stochastic perturbation}. \\ 

The second and the first one are related and are called {\bf internal stochastic perturbations} in the following. Indeed, the new mechanism can be for example the stochastic dynamical behavior of a parameter of the deterministic system. This is for example the case with the Hodgkin-Huxley model where the dynamic of the ion channels is known to be stochastic.\\

Formally, one considers a parameters dependent ordinary differential equation of the form 
\begin{equation}
\di\frac{dx}{dt}=f(t,x;p),\ x\in \R^n ,\ p\in \R^m .
\end{equation}
An external stochastic perturbation means that we consider the following situation
\begin{equation}
\di\frac{dx}{dt}=f(t,x;p) + "{\rm noise}",
\end{equation}
and for an internal stochastic perturbation, we consider the case
\begin{equation}
\left .
\begin{array}{lll}
\di\frac{dx}{dt} & = & f(t,x;p_t ),\ x\in \R^n ,\ p\in \R^m ,\\
p_t & = & p_0 +"{\rm noise}" .
\end{array}
\right .
\end{equation}
We can sometimes relate the two approaches but they are generically different. In particular, one can use different formalism to give a sense to these two situations.\\

As we will see, some fundamental questions related to modeling and in particular the {\bf persistence} of {\bf properties} of the initial deterministic system under the {\bf stochastisation process} depends drastically on the nature of the stochasticity, internal or external.  

\subsection{Classical approaches}
\label{classical}

There exists many ways to {\bf construct} a {\bf stochastic model}. A classical one is known as the {\bf Allen's derivation method} and is discussed for example in \cite{AlE,Al}. It is based on the assumption that the observed dynamics can be decomposed as the sum of elementary stochastic process which can be completely described. This is certainly the most {\bf natural} way and close to the standard modeling procedure of all the methods. We refer to \cite{CrSo} for a discussion of this method.\\

In this article, we will focus on a second one which is frequently used in the literature. The mathematical framework is given by the theory of {\bf stochastic differential equations} in It\^o or Stratonovich sense. Precisely, we consider a deterministic ordinary differential equation of the form
\begin{equation}
\label{ode}
\di\frac{dx}{dt}=f(x),\ \ x\in \mathbb{R}^n,\ n\in \mathbb{N}^* .
\end{equation}
A stochastic behavior is taken into account by adding a "noise" term to the classical deterministic equation as follows:
\begin{equation}
\di\frac{dx}{dt}=f(x) +"\mbox{\rm noise}",
\end{equation}
and to replace the "noise" term by a stochastic one as
\begin{equation}
\label{sde}
dX_t = f(X_t ) dt + \sigma (X_t ,t) dW_t ,
\end{equation}
where $W_t$ is standard Wiener process. This procedure is for example well discussed in (\cite{oksendal2003stochastic}). \\

Another way to introduce stochasticity can be obtained using the theory of {\bf random ordinary differential equations (RODEs)} as defined for example in \cite{kloeden2}. The specific procedure to introduce a stochastic component called the {\bf RODEs stochastisation procedure} is discussed in Section \ref{rodeapproach}.

\subsection{Selection of models and the stochastic persistence problem}

Of course, the main problem is in this case to {\bf find} the {\bf form of the stochastic perturbation}. We do not discuss this problem which is very complicated. We restrict our attention to the {\bf selection problem} which is concerned with the characterization of the set of {\bf admissible} stochastic models for a given phenomenon. By admissible we mean that the stochastic model satisfies some known constraints like positivity of some variables, conservation law, etc.  This {\bf selection} of a good candidate for a stochastic model of a phenomenon can be done in many ways. However, in our particular setting, dealing with the {\bf stochastic extension} of a known deterministic model, this selection is related to {\bf preserving} some specific {\bf constraints} of the phenomenon. For example, part of the Hodgkin-Huxley model describes the dynamical behavior of concentrations which are typically variables which belong to the interval $[0,1]$. This property is independent of the particular {\bf dynamics} of the variables but is related to their {\bf intrinsic} nature. The same is true for the total energy of a mechanical system. This quantity must be preserved independently of the dynamics. We formulate the {\bf stochastic persistence problem} following our approach given in \cite{CrSz} in a different setting: \\

\noindent {\bf Stochastic persistence problem} : {\it Assume that a classical ODE of the form (\ref{ode}) satisfies a set of properties $\mathcal{P}$. Under which conditions a stochastic model obtained using a specific stochastisation procedure satisfies also properties $\mathcal{P}$ ?}\\

In the case of the It\^o or Stratonovich stochastisation procedure (see Section \ref{classical}), the previous problem will lead to a {\bf characterization} of the set of $\sigma$ preserving the considered properties $\mathcal{P}$. In this article, we focus on the following properties :
\begin{itemize}
\item Invariance of a given submanifold of $\mathbb{R}^n$,
\item Number of equilibrium points,
\item Stability properties of the equilibrium points .
\item Preserving Symplectic or Poisson Hamiltonian structures
\end{itemize}
All these problem have been studied so that a complete answer can be given. The invariance problem was specifically discussed in \cite{cy}. In this article, we discuss the {\bf stability/instability} property of equilibrium points in Section \ref{stabilitypart} following previous work of Khasaminskii \cite{Khasminskii} in the It\^o/Stratonovich case and Han, Kloeden in \cite{kloeden2} for the RODEs case, and {\bf Symplectic/Poisson Hamiltonian structures} in Section \ref{symppoisson} following previous works of J-M. Bismut in \cite{bismut} and Lazaro-Cami, Ortega in \cite{cami} in the Stratonovich setting. 

\subsection{Constructing a stochastic Landau-Lifshitz equation}

In order to illustrate the previous problems on a concrete example, we study in Part \ref{landau} the selection of a {\bf stochastic Landau-Lifshitz} equation. The Landau-Lifshitz (LL) equation \cite{Federer} describes the evolution of magnetization in continuum ferromagnets and plays a fundamental role in the understanding of non-equilibrium magnetism. Following \cite{Etore}, the Landau-Lifshitz equation is given by
 
\begin{equation}
   \label{LLg}
   \frac{d\mu}{dt}=-\mu\wedge h -\alpha\mu \wedge(\mu \wedge h),
   \tag{LLg}
\end{equation} 
where $\mu(t)$ is the single magnetic moment, $\wedge$ is the vector cross product in $ \mathbb{R}^3$, $h$ is the effective field, and $\alpha >0$ the damping effects.\\
   
The LL equation has some important properties \cite{Cimrak}:
\begin{itemize}
\item The main one is that the norm of the magnetization is a constant of motion, i.e. that up to normalization the sphere $S^2$ is an invariant submanifold for the system. 

\item Secondly, the free energy of the system is a nonincreasing function of time. This property is also fundamental, because it guarantees that the system tends toward stable equilibrium points, which are minima of the free energy. 

\item Lastly, if there is no damping, i.e., $\alpha=0,$ the energy is conserved, which is called the Hamiltonian structure \cite{Lakshmanan} and \cite{Bertotti}.
\end{itemize}

However, as reminded by Etore and al. in (\cite{Etore}, Section.1), "in order to understand the behavior of ferromagnetic materials at ambient temperature or in electronic devices where the Joule effect induces high heat fluxes" one need to study {\bf thermal effects} in ferromagnetic materials. This effect is usually modeled by the introduction of a noise at microscopic scale on the magnetic moment direction and the mesoscopic scale by a transition of behavior. As a consequence, as stated in \cite{Etore}, "{\bf it is essential to understand the impact of introducing stochastic perturbations in deterministic models of ferromagnetic materials} such as the micro-magnetism" (\cite{brown1,brown2}).\\
 
During the last decade, an increasing number of articles have been devoted to the {\bf construction of a stochastic version} of the LL equation (see \cite{mercer,zheng,raikher,atkinson,stepanov,scholz,martinez,smith}).\\

Up to our knowledge, all the models constructed in the previous articles are considering {\bf external stochastic perturbations}. Essentially, we have two kind of stochastic models which are related to the stochastic calculus framework used: It\^{o} or Stratonovich stochastic calculus. Both formalism has its advantages and problems.

\begin{itemize}
\item The Stratonovich formalism is suitable for problems involving geometric questions. Indeed, as the Stratonovich calculus behaves like the classical differential calculus, one can recover most known results in differential geometry. This in particular the case for invariance of manifolds. However, other issues related to equilibrium points and stability problems lead to many difficulties. This is in particular the case for all the Stratonovich version of the LL equation as discussed for example by Etore and al in (\cite{Etore}, Section 2.2): the Stratonovich stochastic LL equation satisfies trivially the invariance condition. However, as proved in (\cite{Etore}, Proposition 4), we have not the expected equilibrium point $b$ and moreover, the dynamics is positive recurrent on the whole sphere except a small northern cap. Moreover, the interpretation is more difficult (see \cite{Etore}, section 2.2).

\item The It\^{o} formalism is up to our knowledge, only used by Etore and al in \cite{Etore} in order to solve the previous problems. They use the $S^2$ version of an invariantization method discussed in \cite{cy}. However, as already discussed, this method is unsatisfactory (see \cite{cy}, Section 4). 
\end{itemize}

Using the RODE's approach discussed in Section \ref{rodeapproach}, we propose an {\bf alternative modeling}. Indeed, even if the previous models use external stochastic perturbations, they are intended to model the stochastic fluctuation of the external field which enters as a parameter in the deterministic model. As a consequence, we can adopt our strategy to model the stochastic fluctuation of the externalfield as an {\bf internal stochastic fluctuation}. This is precisely done in Section \ref{rode}, where we use the theory of RODE for random ordinary differential equation to formulate a new class of stochastic LL equation satisfying the invariance of $S^2$ and possessing good properties for the stability/instability of the equilibrium points. 

\subsection{Organization of the paper}

Section \ref{remindstoc} contains a short reminder of classical  results on It\^o and Stratonovich's stochastic differential equations.

We then define and study two stochastisation procedures: In Section \ref{stochastisationparameter} the external stochastisation of linearly parameter dependent systems and in Section \ref{rodeapproach} the RODEs stochastisation procedure. These methods are used to discuss stochastic version of the Larmor equation which is a non dissipative version of the Landau-Lifshitz equation. \\

In Section \ref{stabilitypart} and Section \ref{symppoisson}, we study the persistence problem from the point of view of stability and the Hamiltonian structure of an equation. We use the It\^o, Stratonovich and RODEs formalism. Several examples are also given for each notion. This part is self-contained.\\

Section \ref{landau} discuss the construction of a stochastic Landau-Lifshitz equation following some selection rules which are assumed to be fundamental for all possible stochastic generalization of the equation. We give a self-contained introduction to the properties of the Landau-Lifshitz equation. We review classical approach to the  stochastisation process and we propose a new model based on the RODEs approach discuss in Section \ref{rode}.
 
\section{Reminder about stochastic differential equations}
\label{remindstoc}

In this article, we consider a parameterized differential equation of the form 

\begin{equation}
\label{DE}
dX_t = f (t,X_t,b)dt,\quad x\in \R^{n}
\tag{DE}
\end{equation}

where $b\in \R^{k}$ is a set of parameters, $f: \R^{n}\times \R^{k}\longrightarrow \R^{n}$ is a Lipschitz continuous function with respect to $x$ for all $b$.
We remind basic properties and definition of stochastic differential equations in the sense of It\^o and of Stratonovich. We refer to the book \cite{oksendal2003stochastic} for more details.

\subsection{It\^o stochastic differential equation}

A {\it stochastic differential equation} is formally written (see \cite{oksendal2003stochastic},Chap.V) in differential form as  

\begin{equation}
\label{IE}
dX_t = f (t,X_t)dt+\sigma(t,X_t)dB_t ,
\tag{IE}
\end{equation}

which corresponds to the stochastic integral equation
\begin{equation}
X_t=X_0+\int_0^t f (s,X_s)\,ds+\int_0^t \sigma (s,X_s)\,dB_s ,
\end{equation}
where the second integral is an It\^o integral (see \cite{oksendal2003stochastic},Chap.III) and $B_t$ is the classical Brownian motion (see \cite{oksendal2003stochastic},Chap.II,p.7-8).\\

An important tool to study solutions to stochastic differential equations is the {\it multi-dimensional It\^o formula} (see \cite{oksendal2003stochastic},Chap.III,Theorem 4.6) which is stated as follows : \\

We denote a vector of It\^o processes by $\mathbf{X}_t^\mathsf{T} = (X_{t,1}, X_{t,2}, \ldots, X_{t,n})$ and we put $\mathbf{B}_t^\mathsf{T} = (B_{t,1}, B_{t,2}, \ldots, B_{t,l})$to be a $l$-dimensional Brownian motion (see \cite{karatzas},Definition 5.1,p.72),  $d\mathbf{B}_t^\mathsf{T} = (dB_{t,1}, dB_{t,2}, \ldots, dB_{t,l})$. We consider the multi-dimensional stochastic differential equation defined by (\ref{IE}). Let $F$ be a $\mathcal{C}^2(\mathbb{R}_+ \times \mathbb{R},\mathbb{R})$-function and $X_t$ a solution of the stochastic differential equation (\ref{IE}). We have 
\begin{equation}
dF(t,\mathbf{X}_t) = \frac{\partial F}{\partial t} dt + (\nabla_\mathbf{X}^{\mathsf T} F) d\mathbf{X}_t + \frac{1}{2} (d\mathbf{X}_t^\mathsf{T}) (\nabla_\mathbf{X}^2 F) d\mathbf{X}_t,
\end{equation}
where $\nabla_\mathbf{X} F = \partial F/\partial \mathbf{X}$ is the gradient of $F$ w.r.t. $X$, $\nabla_\mathbf{X}^2 F = \nabla_\mathbf{X}\nabla_\mathbf{X}^\mathsf{T} F$ is the Hessian matrix of $F$ w.r.t. $\mathbf{X}$, $\delta$ is the Kronecker symbol and the following rules of computation are used : $dt dt = 0$, $dt dB_{t,i}  = 0$, $dB_{t,i} dB_{t,j} = \delta_{ij} dt$.

\subsection{Stratonovich stochastic differential equation}

A Stratonovich stochastic differential equation is formally denoted in differential form by
\begin{equation}
\label{SE}
dX_t = f (t,X_t ) dt +\sigma (t,X_t ) \circ dB_t ,
\tag{SE}
\end{equation}
which corresponds to the stochastic integral equation 
\begin{equation}
X_t =x+\di\int_0^t f (s, X_s ) ds + \di\int_0^t \sigma (s, X_s ) \circ dB_t ,
\end{equation}
where the second integral is a Stratonovich integral (see \cite{oksendal2003stochastic},p.24,2)).\\

The advantage of the Stratonovich integral is that it induces classical chain rule formulas under a change of variables:
\begin{equation}
dF(t,\mathbf{X}_t) = \frac{\partial F}{\partial t} dt + (\nabla_\mathbf{X}^{\mathsf T} F) d\mathbf{X}_t .
\end{equation}

\subsection{Conversion formula}

A solutions of the Stratonovich differential equation (\ref{SE}) corresponds to the solutions of a modified It\^o equation (see \cite{oksendal2003stochastic},p.36) :
\begin{equation}
\label{ito}
dX_t = f_{\rm cor} (t,X_t ) dt +\sigma (t,X_t )  dB_t ,
\end{equation}
where
\begin{equation}
f_{\rm cor} (t,x ) =\left [ f (t,x ) +\di\frac{1}{2} \sigma' (t,x ) \sigma (t,x) \right ] .
\end{equation}
The correction term $\di\frac{1}{2} \sigma' (t,X_t ) \sigma (t,X_t)$ is also called the {\it Wong-Zakai} correction term.
In the multidimensional case, i.e., $f :\R^{n+1} \rightarrow \R^n$, $f (t,x)=(f_1 (t,x) ,\dots ,f_n (t,x))$ and $\sigma :\R^{n+1} \rightarrow \R^{n\times l}$, 
$\sigma (t,x)=( \sigma_{i,j} (t,x))_{1\leq i\leq n,\ 1\leq j\leq l}$ the analogue of this formula is given by (see \cite{oksendal2003stochastic},p.85) :
\begin{equation}
\label{strato-ito}
f_{\rm cor, i} (t,x)=f_i (t,x)+\di\frac{1}{2} \di\sum_{j=1}^n \di\sum_{k=1}^l \di\frac{\partial \sigma_{i,j}}{\partial x_k} \sigma_{k,j},\ \ 1\leq i \leq n .
\end{equation}
In all this paper, let assume that the functions $f,g$ satisfy the assumptions of the existence and uniqueness theorem for stochastic differential equations \cite[theorem 5.2.1, p 66] {oksendal2003stochastic}, where they are continuous in both variables, satisfy a global Lipschitz condition with respect to the variable $x \in \mathbb{R}^{n}$ uniformly in $t,$  i.e., there exists a constant $L>0$ such that
$$ \vert f(t,x_1) -f(t,x_2)   \vert + \vert \sigma(t,x_1) -\sigma(t,x_2) \vert\leq L \vert x_1-x_2\vert;$$
for all $x_1, x_2 \in \mathbb{R}^{n}$ and for all $t\geq 0,$ and bounded in the sense that 
$$ \vert f(t,x)  \vert + \vert \sigma(t,x)  \vert\leq C (1+ \vert x \vert); \quad x\in \mathbb{R}^{n}, t\geq 0$$
for some constant $C.$ In addition, suppose that the initial condition $x_0$ is an adapted process to the filtration $(\mathcal{F}_t)$ generated by $W_t(.), t\geq 0$ with $\mathbb{E} [x_0^{2}]< \infty.$

\section{External stochastisation of linearly parameter dependent systems}
\label{stochastisationparameter}

The previous framework can be used to produce a stochastic version of a deterministic system which depends {\bf linearly} on a parameter which is assumed to behave stochastically.

\subsection{The stochastisation procedure}
\label{sectionexternal}

Let us consider an ordinary differential equation of the form
\begin{equation}
\label{deterinitial}
\di\frac{dx}{dt} =f(x;p) ,\ x\in \R^n,\ p\in \R^m ,
\end{equation}
where $f$ is sufficiently smooth and which is {\bf linear} in the parameter $p$. As a consequence, we can write $f$ as 
\begin{equation}
f(x;p)=A(x).p , 
\end{equation}
with $A(x)\in \mathcal{M} (n,m)$.\\

We assume that $p$ behaves stochastically and we apply the previous procedure to produce a stochastic version of the equation. We then denote 
\begin{equation}
p=p_0 +N (x,t) ,
\end{equation}
where $N$ corresponds to the noise. As $f$ is linear, we have 
\begin{equation}
f(x;p)=f(x;p_0) +f(x;N(x,t)) ,
\end{equation}
and we are lead formally to an equation of the form
\begin{equation}
\label{noiselinear}
\di\frac{dx}{dt} =f(x;p_0 )+f(x;N(x,t)).
\end{equation}
As remind in the introduction, the noise $N$ must be seen as the "derivative" of a white noise. Using the framework of stochastic differential equations, we obtain the following {\bf stochastic version} of equation  (\ref{noiselinear}):
\begin{equation}
\label{noiselinearstoc}
dX_t =f(X_t ;p_0 )dt+f(x;\sigma (x,t) dW_t),
\end{equation}
where $\sigma (x,t) \in \mathcal{M}(m,k)$ and $W_t$ is a $k$ dimensional Brownian motion.\\

The previous procedure will be called {\bf stochastisation} of equation (\ref{noiselinear}) under external stochastic perturbations on the parameters.

\subsection{Properties of the stochastized equation}

Some important properties are preserved by the stochastisation procedure.\\

Let $p=p_0$ be fixed. An {\bf equilibrium point} of the deterministic equation (\ref{deterinitial}) denoted $x_{\star} (p_0 )$ is a solution of the equation
\begin{equation}
f(x_{\star} ;p_0 )=0 .
\end{equation}
We denote by $\mathcal{E}_{p_0}$ the set of equilibrium points of equation (\ref{deterinitial}) when $p=p_0$.\\

Due to the linearity of $f$ with respect to $p$, we have:

\begin{lemma}[Persistence of equilibrium points] 
\label{persistenceexternalequilibrium}
The set of equilibrium point of the stochastized equation (\ref{noiselinearstoc}) coincides with those of the deterministic equation independently of the It\^o or Stratonovich interpretation of the SDE if and only if the diffusion term $\sigma (x,t) dW_t$ is such that $\sigma (t,x_{\star} ) dW_t$ belongs to the Kernel of the linear maps $p\rightarrow f(x_{\star} ,p)$ for $x_{\star} \in \mathcal{E}_{p_0}$.
\end{lemma}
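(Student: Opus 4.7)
The plan is to unfold the stochastized equation using the linearity of $f$ in $p$ and then invoke the standard characterization of equilibria of an SDE as points at which both the drift and the diffusion coefficients vanish. Writing $f(x,p)=A(x)p$, equation~(\ref{noiselinearstoc}) reads
\begin{equation*}
dX_t = A(X_t)\,p_0\,dt + A(X_t)\sigma(X_t,t)\,dW_t,
\end{equation*}
and I would abbreviate the diffusion coefficient as $\tilde{\sigma}(x,t):=A(x)\sigma(x,t)$. For a constant process $X_t\equiv x_\star$ to be a strong solution one needs the drift to vanish, i.e.\ $A(x_\star)p_0=0$, which is exactly $x_\star\in\mathcal{E}_{p_0}$, and one also needs $\tilde{\sigma}(x_\star,\cdot)\equiv 0$; the latter I would justify by a quadratic-variation argument, since the constant path has zero quadratic variation while the continuous martingale part contributes $\int_0^t \tilde{\sigma}\tilde{\sigma}^{\mathsf T}(x_\star,s)\,ds$, forcing $\tilde{\sigma}(x_\star,\cdot)=0$.

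Granting this characterization, the equivalence splits into two straightforward directions. For sufficiency, if $A(x_\star)\sigma(x_\star,t)=0$ at every $x_\star\in\mathcal{E}_{p_0}$ — which is exactly the condition that $\sigma(x_\star,t)\,dW_t$ lies in $\ker\bigl(p\mapsto f(x_\star,p)\bigr)$ — then each deterministic equilibrium remains an equilibrium of the It\^o SDE, and conversely no spurious equilibrium can appear since the drift $A(x)p_0$ is unchanged and must still vanish at any equilibrium. For necessity, if the equilibrium sets coincide then for every $x_\star\in\mathcal{E}_{p_0}$ the diffusion $\tilde{\sigma}(x_\star,t)$ must vanish for all $t$, which is precisely the kernel condition.

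For the assertion that this is independent of the It\^o or Stratonovich interpretation, I would apply the conversion formula~(\ref{strato-ito}) to the diffusion matrix $\tilde{\sigma}$. The Wong-Zakai correction evaluated at an equilibrium,
\begin{equation*}
\frac{1}{2}\sum_{j,k}\frac{\partial \tilde{\sigma}_{i,j}}{\partial x_k}(x_\star,t)\,\tilde{\sigma}_{k,j}(x_\star,t),
\end{equation*}
contains the factor $\tilde{\sigma}_{k,j}(x_\star,t)$, which vanishes under the kernel assumption. Hence the It\^o and Stratonovich drifts coincide at $x_\star$, so both formalisms identify the same set of equilibria and the kernel criterion is both necessary and sufficient in each.

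The delicate point I anticipate is the implicit claim that an ``equilibrium of an SDE'' really does force both coefficients to vanish at the point; once this is secured via the uniqueness theorem recalled at the end of Section~\ref{remindstoc}, combined with the quadratic-variation remark above, the remainder of the proof is essentially bookkeeping with the linear factorization $f(x,p)=A(x)p$.
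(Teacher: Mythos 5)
Your proposal is correct and follows essentially the same route as the paper's proof: reduce an equilibrium of the SDE to the simultaneous vanishing of the drift $A(x_\star)p_0$ and the diffusion $A(x_\star)\sigma(x_\star,t)$, identify the latter with the kernel condition by linearity, and dispose of the Stratonovich case by noting that the Wong--Zakai correction contains the diffusion coefficient as a factor and hence vanishes at $x_\star$. The only difference is that you justify the ``both coefficients must vanish'' step explicitly via quadratic variation, which the paper simply asserts; this is a welcome addition rather than a deviation.
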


\begin{proof}
We begin with the It\^o case. An equilibrium point $x_{\star}$ must satisfies the two equations $f(x_{\star} ,p_0 )=0$ and $f(x_{\star} ;\sigma (x_{\star} ,t) dW_t) =0$. The first one is equivalent to $x_{\star}$ is in $\mathcal{E}_{p_0}$. Due to the linearity of $f$, the second equation is satisfied if and only if $\sigma (x_{\star} ,t )dW_t$ belongs to the Kernel of the linear function $p\rightarrow f(x_{\star} ;p)$ for each $x_{\star}$ in $\mathcal{E}_{p_0}$.\\

In the Stratonovich case we return to the It\^o case using the conversion formula. The correcting term involves the diffusion coefficient $A(x) \sigma (t,x)$ evaluated at the point $x_{\star}$ and its partial derivatives. As $A(x_{\star} )\sigma (t,x_{\star} )$ must be $0$ for all $t$, we deduce that the correcting term is also zero in the drift part. As a consequence, the drift condition is equivalent to $f(x_{\star} ,p_0 )=0$ as in the previous case. This concludes the proof.
\end{proof} 

The previous result implies that in the setting of external stochastic perturbation of linearly parameter dependent systems, the choice of the It\^o or Stratonovich framework can not be decided only looking to the set of equilibrium points. We will see that the asymptotic behavior of the solutions can nevertheless be used to select a stochastic model.

\subsection{A stochastic Larmor equation}
\label{larmor}

The {\bf Larmor equation} defined for all $b\in \R^3$ by 
\begin{equation}
\label{eqlarmor}
\di\frac{d\mu }{dt} =\mu \wedge b ,\ \ \mu\in \R^3 .
\end{equation}
For each $b\in \R^3$, equilibrium points are solution of the equation $\mu \wedge b=0$ which gives $\mu \in \langle b \rangle$ Moreover, we have for all $\mu_0 \in \R^3$, the solution $\mu_t$ beginning with $\mu_0$ is such that $\parallel \mu_t \parallel =\parallel \mu_0 \parallel$. As a consequence, for each $r>0$, the sphere centered in $O=(0,0,0)\in \R^3$ of radius $r$ is invariant under the flow of the Larmor equation. As a consequence, restricting our attention to solutions beginning with $\mu_0 \in S^2$, we have for all $b\in \R^3$:
\begin{itemize}
\item The sphere $S^2$ is invariant under the flow of the Larmor equation.
\item The flow of the Larmor equation restricted to $S^2$ possesses two equilibrium point given by $\pm b/\parallel b \parallel$.
\item The function $V(\mu )=\mu .b$ is a first integral of the Larmor equation.
\end{itemize}
The two equilibrium points of the Larmor equation are {\bf center} equilibrium points, i.e. equilibrium points surrounded by family of concentric periodic orbits. This situation is a priori highly sensitive to perturbation. The two equilibrium points are {\bf stable} equilibrium points (see Section \ref{remindstabilitydeter} for a reminder about stability of equilibrium points). The motion of the magnetic moment $\mu$ describing circle around the axes defined by $b$ is called the {\bf Larmor precession}.\\

Using the previous result on persistence, one can study the effect of an external perturbation on the Larmor equation. An external perturbation of the parameter $b$ in the Stratonovich setting of the Larmor equation leads to an equation of the form (see Section \ref{sectionexternal}):
\begin{equation}
\label{stoclarmor}
d\mu = \left [ \mu \wedge b \right ]  dt+ \epsilon \mu \wedge \left [ \sigma (t,\mu ) d_{\circ} W_t \right ],
\end{equation}
where $W_t$ is a three dimensional Brownian motion and $\sigma \in \mathcal{M} (3,3)$ is a non zero matrix. Such an equation will be called a {\bf stochastic Larmor equation}.

\begin{lemma}
\label{larmorinvariance}
The flow of any stochastic Larmor equation preserves the sphere $S^2$. 
\end{lemma}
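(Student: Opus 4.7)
The plan is to exploit the fact that the Stratonovich formalism obeys the ordinary chain rule, so that invariance of $S^2$ reduces to a purely algebraic identity about the cross product. Concretely, I would set $F(\mu) = \|\mu\|^2 = \mu\cdot\mu$ and compute $dF(\mu_t)$ along a solution of the stochastic Larmor equation using the Stratonovich chain rule recalled in Section \ref{remindstoc}, namely
\begin{equation*}
dF(\mu_t) = (\nabla_\mu F)^{\mathsf T}\, d\mu_t = 2\mu_t \cdot d\mu_t.
\end{equation*}

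Substituting the Stratonovich equation (\ref{stoclarmor}) for $d\mu_t$, one gets
\begin{equation*}
dF(\mu_t) = 2\mu_t\cdot(\mu_t\wedge b)\,dt + 2\epsilon\,\mu_t\cdot\bigl(\mu_t\wedge [\sigma(t,\mu_t)d_\circ W_t]\bigr).
\end{equation*}
The key point is then the elementary identity $u\cdot(u\wedge v)=0$ for all $u,v\in\R^3$, applied with $u=\mu_t$ first to $v=b$ and then, component by component in the Brownian increment, to the columns of $\sigma(t,\mu_t)$. Both the drift and the diffusion terms vanish identically, so $dF(\mu_t)=0$, hence $\|\mu_t\|^2=\|\mu_0\|^2$ almost surely for all $t\geq 0$. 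Choosing $\mu_0\in S^2$ gives the claimed invariance.

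The only subtlety, and what I view as the main thing to check carefully, is that the scalar product $\mu_t\cdot(\mu_t\wedge[\sigma\,d_\circ W_t])$ really kills the noise term componentwise: one must expand $\sigma(t,\mu_t)d_\circ W_t = \sum_{j=1}^3 \sigma_{\cdot j}(t,\mu_t)\,d_\circ W_{t,j}$ and apply $\mu_t\cdot(\mu_t\wedge \sigma_{\cdot j}) = 0$ inside each Stratonovich integral. Since the standing assumptions at the end of Section \ref{remindstoc} guarantee existence, uniqueness, and hence well-definedness of these Stratonovich integrals, no further technical machinery is needed, and the usual It\^o-to-Stratonovich correction never enters because we work directly in the Stratonovich framework where the chain rule holds without modification.
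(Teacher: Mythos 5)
Your proof is correct and is essentially the paper's argument: the paper invokes its general Stratonovich invariance criterion (Theorem \ref{sis}/Corollary \ref{spheres} in the appendix) and then checks $\mu\cdot(\mu\wedge b)=0$ and $\mu\cdot(\mu\wedge[\sigma\,dW_t])=0$, while you simply inline the proof of that criterion by computing $d(\|\mu_t\|^2)$ with the Stratonovich chain rule. The key identity $u\cdot(u\wedge v)=0$ and the reliance on the Stratonovich calculus obeying the ordinary chain rule are identical in both.
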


\begin{proof}
A general result asserts that a given submanifold $M$ defined by a function $F$, i.e. $M=\{ x\in \R^n \mid F(x)=0 \}$  which is invariant under the deterministic flow associated to \eqref{DE}, i.e., 
$$\nabla F(x)\cdot f (t,x)=0, \text{ for all } x\in M, t\geq 0$$
is strongly invariant under the flow of the stochastic system \eqref{IE} in the Stratonovich sense, if and only if, 
$$\nabla F(x)\cdot \sigma (t,x)=0, \text{ for all } x\in M, t\geq 0.$$

We have by construction that  $\mu . \left [ \mu \wedge b \right ] =0$ and $\mu . \left [ \mu \wedge \left [ \sigma (t,\mu ) dW_t \right ] \right ] =0$ for any $\sigma$ so that the previous assumptions are satisfied for the sphere $S^2$. As a consequence, the sphere $S^2$ is invariant. This concludes the proof.
\end{proof}

The behavior of the equilibrium point is as usual different:

\begin{lemma}
\label{larmorequilibrium}
The equilibrium points $\pm b/\parallel b\parallel$ persist for the stochastic Larmor equation if and only if the diffusion term $\sigma (t,\mu) dB_t$ is of the form 
\begin{equation}
b. \gamma (\mu) dB_t ,
\end{equation}
where $B_t$ is one dimensional and $\gamma :\R^3 \rightarrow \R$ is a scalar function.
\end{lemma}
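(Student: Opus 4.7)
The plan is to apply Lemma~\ref{persistenceexternalequilibrium} directly to the Larmor setting, exploiting the linearity $f(\mu;b)=\mu\wedge b = A(\mu)\,b$, where $A(\mu)$ is the $3\times 3$ skew-symmetric matrix representing the left cross product by $\mu$. The crucial observation is that for any nonzero vector $v\in\R^3$, the kernel of the linear map $p\mapsto A(v)p=v\wedge p$ is exactly the line $\langle v\rangle$ spanned by $v$. Taking $v=\pm b/\|b\|$, this kernel is $\langle b\rangle$.

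First, I would verify that $\pm b/\|b\|\in\mathcal{E}_{p_0}$, which is immediate from $b\wedge b=0$. Then the persistence lemma asserts that these two points remain equilibria of the stochastic Larmor equation (independently of It\^o or Stratonovich interpretation) if and only if $\sigma(t,\pm b/\|b\|)\,dW_t$ lies in $\langle b\rangle$ for every realization of $dW_t$. Since $dW_t$ varies freely over $\R^k$, this forces each column of the matrix $\sigma(t,\pm b/\|b\|)$ to be collinear with $b$. Equivalently, there exists a row vector $u(t)$ such that $\sigma(t,\pm b/\|b\|)=b\, u(t)^{\mathsf T}$, and hence $\sigma(t,\pm b/\|b\|)\,dW_t = b\cdot\bigl(u(t)^{\mathsf T}dW_t\bigr)$. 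The bracketed factor is a scalar It\^o differential, which by L\'evy's characterisation can be rewritten as $\gamma\,dB_t$ for a one-dimensional Brownian motion $B_t$; globally, this motivates parametrising all admissible diffusions in the canonical form $b\,\gamma(\mu)\,dB_t$ with $\gamma:\R^3\to\R$.

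For the converse direction, I would substitute $\sigma(\mu)\,dB_t=b\,\gamma(\mu)\,dB_t$ into the diffusion term of the stochastic Larmor equation to get $\mu\wedge(b\,\gamma(\mu))\,dB_t = \gamma(\mu)\,(\mu\wedge b)\,dB_t$, which vanishes at $\mu=\pm b/\|b\|$ regardless of $\gamma$. Combined with the vanishing of the drift $\mu\wedge b$ at the same points, this shows $\pm b/\|b\|$ remain equilibria. The independence with respect to the It\^o/Stratonovich choice is handled as in the proof of Lemma~\ref{persistenceexternalequilibrium}: the Wong-Zakai correction involves the product $A(x_\star)\sigma(t,x_\star)$ and its derivative, both of which are controlled by the fact that $A(x_\star)\sigma(t,x_\star)=0$ along the equilibrium set.

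The main obstacle I anticipate is the interpretation of the ``only if'' direction: strictly speaking, Lemma~\ref{persistenceexternalequilibrium} only constrains $\sigma$ at the two equilibrium points, so away from $\pm b/\|b\|$ the diffusion could in principle be arbitrary. The statement should therefore be understood as characterising a canonical representative of the admissible class, using that the effective randomness acting on the equilibrium is intrinsically one-dimensional (since $\langle b\rangle$ is a line). Making this reduction explicit, and justifying that the multidimensional Brownian motion can be collapsed to a scalar $B_t$ without loss, is the delicate point of the argument.
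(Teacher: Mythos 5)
Your proposal is correct and follows essentially the same route as the paper: apply Lemma~\ref{persistenceexternalequilibrium}, identify the kernel of $p\mapsto \pm\frac{1}{\|b\|}\,b\wedge p$ as $\langle b\rangle$, and conclude that the diffusion must be of the form $b\,\gamma(\mu)\,dB_t$ with $B_t$ one-dimensional. The subtlety you flag at the end (the condition only constrains $\sigma$ at the two equilibrium points, so the stated form is a canonical representative rather than a forced global shape) is real, and the paper's own proof passes over it silently, so your more careful reading is if anything an improvement.
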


\begin{proof}
Moreover, using Lemma \ref{persistenceexternalequilibrium}, we deduce that a stochastic Larmor equation preserves the equilibrium points if and only if $\sigma (t,\mu ) dB_t$ belongs to the Kernel of $p\rightarrow f(\pm \di\frac{b}{\parallel b\parallel} ,p)$. As $f(\pm \di\frac{b}{\parallel b\parallel} ,p) =\pm\di\frac{1}{\parallel b\parallel} b\wedge p$ and $b\wedge p=0$ if and only if $p$ is colinear with $b$, we deduce that $\sigma (t,x) dB_t$ must be of the form  $b\, \gamma (\mu ) dB_t$ where $B_t$ is one dimensional and $\gamma (\mu)$ is a scalar function. This concludes the proof.
\end{proof}

One can go even further by looking for the persistence of the first integral given by $V(\mu )=\mu .b$:

\begin{lemma}
The first integral $V(\mu )=\mu .b$ is not preserved by a stochastic Larmor equation unless the diffusion term is such that 
\begin{equation}
b\wedge [\sigma (t,\mu ) d_{\circ} W_t ] =0,
\end{equation}
for all $t\in \R$ and $\mu \in S^2$.
\end{lemma}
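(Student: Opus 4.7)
The natural plan is to compute the Stratonovich differential of $V(\mu) = \mu \cdot b$ along a solution $\mu_t$ of \eqref{stoclarmor}. Since Stratonovich calculus obeys the classical chain rule, we have directly
\begin{equation*}
d_\circ V(\mu_t) = b \cdot d\mu_t = b \cdot (\mu_t \wedge b)\, dt + \epsilon\, b \cdot \bigl[ \mu_t \wedge (\sigma(t,\mu_t) d_\circ W_t) \bigr].
\end{equation*}
The drift term vanishes identically because $\mu \wedge b$ is orthogonal to $b$, so everything reduces to analysing the remaining stochastic contribution.

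Applying the cyclic identity for the scalar triple product rewrites the diffusion part as $-\mu_t \cdot [b \wedge (\sigma(t,\mu_t) d_\circ W_t)]$. For $V$ to be preserved along every sample path, this expression must vanish. By Lemma \ref{larmorinvariance} every point of $S^2$ is a legitimate initial condition (and the flow stays on $S^2$), so the requirement becomes the pointwise constraint
\begin{equation*}
\mu \cdot \bigl[ b \wedge (\sigma(t,\mu) d_\circ W_t) \bigr] = 0, \quad \forall (t,\mu) \in \R_+ \times S^2.
\end{equation*}

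The remaining and decisive step is to conclude from this scalar identity the vector condition $b \wedge (\sigma(t,\mu) d_\circ W_t) = 0$ stated in the lemma; this is the main obstacle of the argument. The idea is that the vector $\eta(t,\mu) := b \wedge (\sigma(t,\mu) d_\circ W_t)$ automatically lies in the plane $b^{\perp}$, and the scalar identity above forces it to be orthogonal to $\mu$ as well, so one exploits the freedom to let $\mu$ traverse enough independent directions on $S^{2}$ to pin down $\eta \equiv 0$. The converse implication is immediate: if $\sigma(t,\mu) d_\circ W_t$ is parallel to $b$, then $\mu \wedge (\sigma d_\circ W_t)$ is orthogonal to $b$, the stochastic contribution to $d_\circ V$ drops out together with the drift, and $V$ is constant along sample paths.
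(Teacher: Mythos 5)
Your computation follows the paper's own route: Stratonovich chain rule, the drift term $b\cdot(\mu\wedge b)$ vanishes, and everything reduces to the vanishing of the scalar triple product $b\cdot[\mu\wedge(\sigma(t,\mu)\,d_{\circ}W_t)]=-\mu\cdot[b\wedge(\sigma(t,\mu)\,d_{\circ}W_t)]$ for all $\mu\in S^2$. The converse direction is also handled correctly. The problem is the step you yourself flag as the main obstacle, and the way you propose to close it does not work. The vector $\eta(t,\mu):=b\wedge(\sigma(t,\mu)\,d_{\circ}W_t)$ depends on $\mu$ through $\sigma$, so letting $\mu$ ``traverse enough independent directions on $S^2$'' does not accumulate constraints on a single fixed vector: each choice of $\mu$ constrains a \emph{different} vector $\eta(t,\mu)$. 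For a fixed $\mu$ you only know $\eta(t,\mu)\perp b$ and $\eta(t,\mu)\perp\mu$, which leaves the whole line $\langle \mu\wedge b\rangle$ available. This freedom is realized: take $\sigma(t,\mu)\,d_{\circ}W_t=\gamma(\mu)\,\mu\,dB_t$. Then $\mu\wedge(\sigma\,d_{\circ}W_t)=0$, so $V$ is preserved along every sample path, yet
\begin{equation*}
b\wedge(\sigma(t,\mu)\,d_{\circ}W_t)=\gamma(\mu)\,(b\wedge\mu)\neq 0
\end{equation*}
whenever $\mu$ is not collinear with $b$. So the scalar identity does \emph{not} imply the vector condition of the lemma; the exact characterization coming out of the triple product is coplanarity, i.e.\ $\sigma(t,\mu)\,d_{\circ}W_t\in\mathrm{span}(\mu,b)$ for all $(t,\mu)$, not collinearity with $b$.

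In fairness, the paper's proof makes the same leap (it passes from ``$\mu$, $b$, $\sigma d_{\circ}W_t$ coplanar for all $\mu$'' directly to ``$\sigma d_{\circ}W_t$ collinear with $b$''). The jump can be defended only by an additional observation that neither you nor the paper makes explicit: any component of $\sigma(t,\mu)\,d_{\circ}W_t$ along $\mu$ is annihilated by the cross product $\mu\wedge(\cdot)$ and hence is invisible in the stochastic Larmor equation, so one may normalize $\sigma d_{\circ}W_t$ to lie in $\mu^{\perp}$, and coplanarity together with this normalization does force collinearity with $b$ off the set $\mu\parallel b$. If you add that normalization argument (or restate the conclusion as the coplanarity condition), your proof is complete; as written, the decisive implication is asserted via a counting-of-directions heuristic that fails.
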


\begin{proof}
The It\^o formula gives
\begin{equation}
\left .
\begin{array}{lll}
d[V(\mu_t )] & = & d\mu .b =[(\mu \wedge b).b ] dt + \epsilon (\mu \wedge \left [ \sigma (t,\mu ) d_{\circ} W_t \right ]).b ,\\
 & = & \epsilon (\mu \wedge \left [ \sigma (t,\mu ) d_{\circ} W_t \right ]).b .
\end{array}
\right .
\end{equation}
We have $(\mu \wedge \left [ \sigma (t,\mu ) d_{\circ} W_t \right ]).b =0$ if and only if the three vector $\mu$, $b$ and $\sigma (t,\mu ) d_{\circ} W_t$ are coplanar. As this equality must be satisfied for all $\mu \in S^2$, this implies that the vector $\sigma (t,\mu ) d_{\circ} W_t$ is collinear to $b$ or equivalently that $b\wedge [\sigma (t,\mu ) d_{\circ} W_t ] =0$. This concludes the proof.
\end{proof} 

The previous result implies that the conservative nature of the Larmor equation is generically lost under a Stratonovich external perturbations. This is in particular the case when $\sigma (t,\mu )$ is a constant vector as one can see in the following simulations. 

\subsection{Simulations}

All the simulations are done under the following set of initial conditions :
\begin{itemize}
	\item Period of time $T=10$,
	\item Bound $\varphi=\frac{\pi}{10}$,
	\item Perturbation coefficient $\sigma_1=1$,
	\item Perturbation coefficient $\sigma_2=5$.
\end{itemize}
In order to do numerical simulations we use the Euler-Murayama scheme with time-step $h=10^{-4}$. We perform the simulations for three initial conditions of $\mu_t$ and each one with new realizations of Brownian motion $B^\theta_t$ and $B^\alpha_t$. \\

In Figure 1, we display the simulation of a stochastic Larmor equation for different values of the initial condition $\mu_0 \in S^2$. The green line corresponds to the dynamics of the deterministic Larmor equation  where the Larmor precession is clearly visible. The red one to the behavior of some solutions of the stochastic Larmor equation near the North or South pole. The solutions stay in the two cases on the sphere $S^2$ and we can see that the deterministic solutions belongs to circle obtain by taking the intersection between the sphere and the plane passing through $\mu_0$ and orthogonal to $b$. In the stochastic case, these circles are broken which traduces the fact that the first integral $V$ is not preserved under the stochastisation procedure.\\

\begin{center}
\begin{figure}[!ht]
\centerline{%
    \begin{tabular}{cc}
        \includegraphics[width=0.4\textwidth]{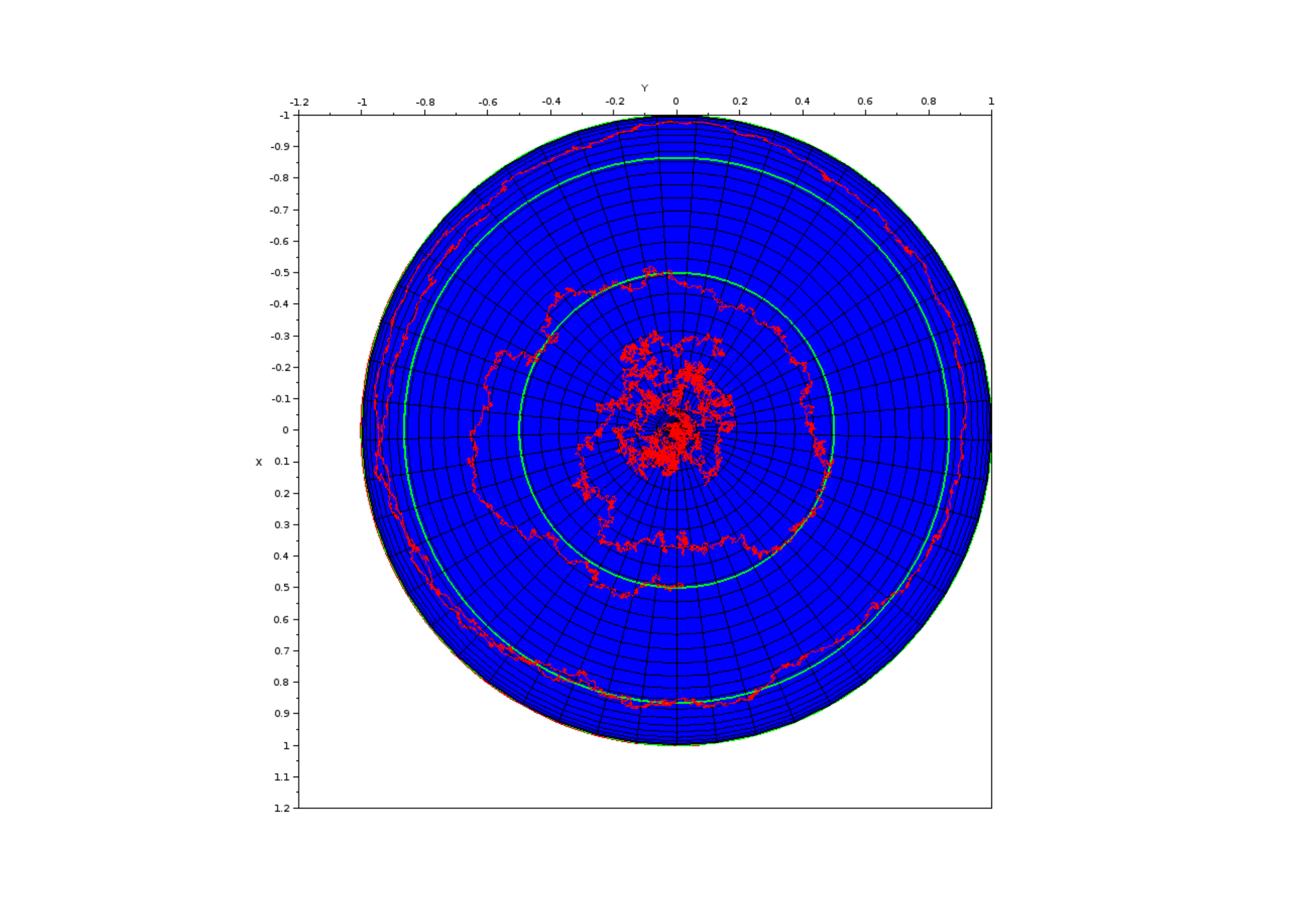} & \includegraphics[width=0.4\textwidth]{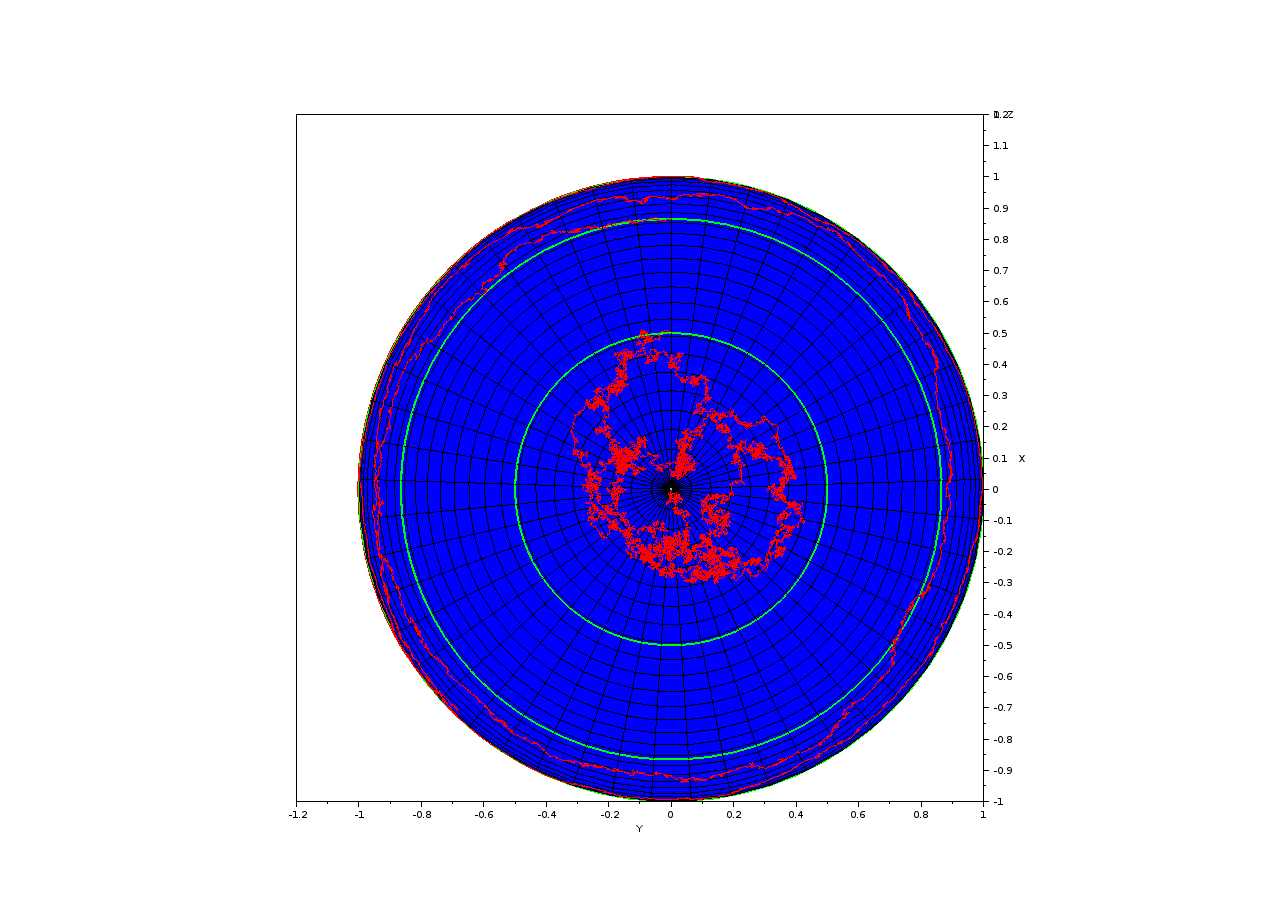}\\
        \footnotesize{(a) Dynamics near the North pole.} & \footnotesize{(b) Dynamics near the South pole}\\
    \end{tabular}}
    \caption{Simulation of a stochastic Larmor equation}
\end{figure}
\end{center}

A global view is given by:

\begin{figure}[!ht]
	\centering
	\includegraphics[width=0.7\textwidth]{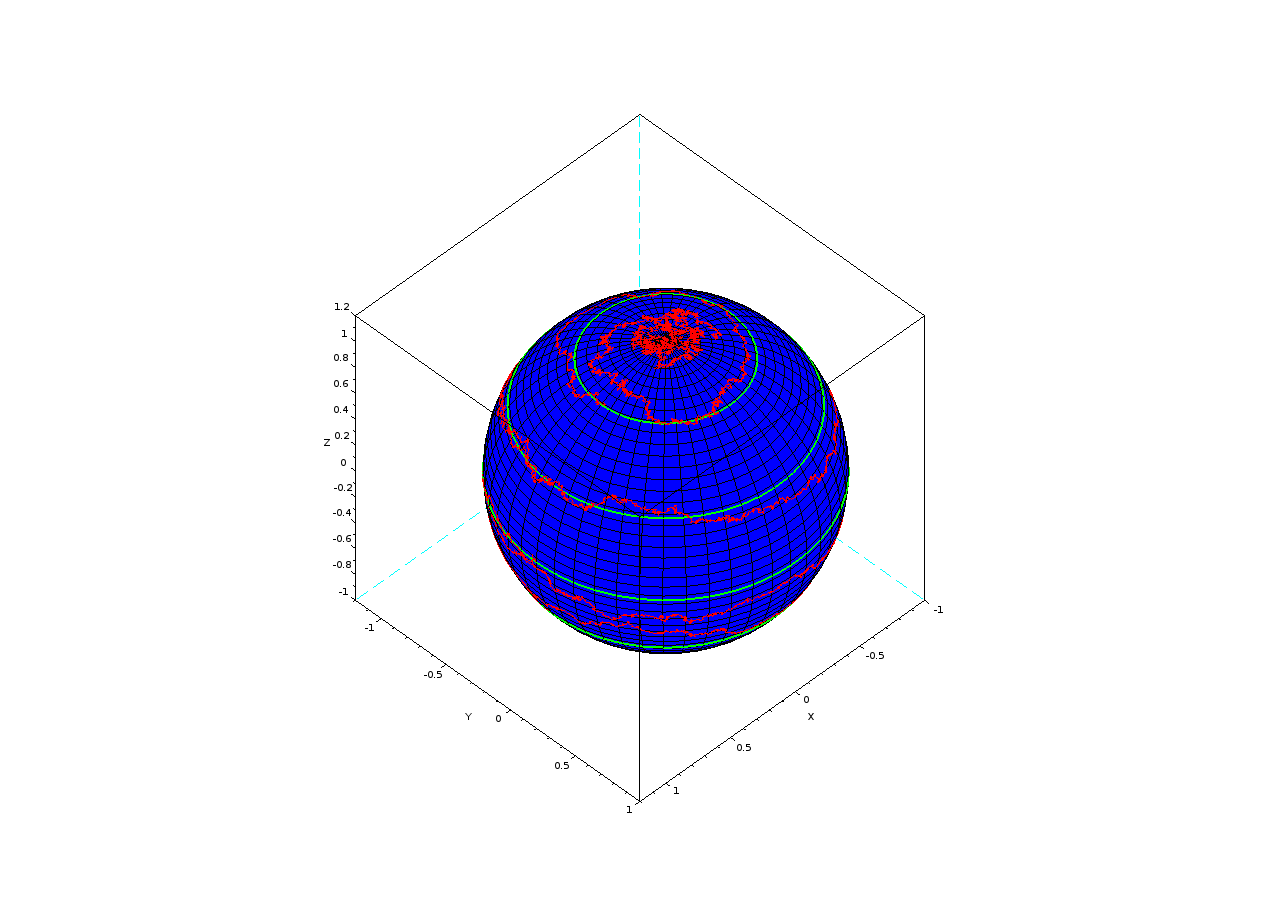}
	\caption{Global dynamics of a stochastic Larmor equation}
	\label{ex1fig3larmor}
\end{figure}

Of course, conservative systems are very sensitive to perturbations in general and one must be very careful in order to preserve this property. The Landau-Lifshitz equation, which can be seen as a perturbation of the Larmor equation by a {\bf dissipative} term, will behave very differently under perturbations. This is due in particular to the robustness of the equilibrium points under perturbations.

\section{The RODEs stochastisation procedure}
\label{rodeapproach}

In this Section, we first remind the definition of a {\bf random ordinary differential equation} (RODE) and we define the RODE stochastisation procedure. We then discuss the persistence of invariance in this framework.

\subsection{The RODEs stochastisation procedure}

For this section, we refer to sevral works of P.Kloeden (\cite{Kloeden}) and in particular the book (\cite{kloeden2}).\\

Up to now, the source of stochasticity was considered as {\bf external} like the modification of the environment in which the dynamical process takes place. However, in many situations, the stochasticity comes from the dynamical behavior of some parameters which were previously considered as fixed. This situation refers to {\bf internal} sources of stochasticity. This is for example the case in biology, when considering the dynamic of neurons where the opening and closing of an ion channel at an experimentally fixed membrane potential is known to be {\bf intrinsically} stochastic (see \cite{Saarinen}, \cite{Fox}).\\

The natural framework to deal with is to consider a {\bf parameters family of differential equations} denoted by $(E_{\eta} )_{\eta \in \R^n}$ and defined by 
$$
\frac{dx}{dt}=f(x,\eta),\quad x\in \R^d,
\eqno{(E_{\eta} )_{\eta \in \R^d}}
$$
where $\eta \in \R^d$ is a set of parameters and $f :\R^n \times\R^d  \rightarrow \R^n $ is a  continuous function.\\

Taking into account stochasticity can be made using the theory of {\bf random ordinary differential equations}, denoted simply by RODE's system in the following which are promoted by P.Kleoden in (\cite{kloeden2}).

\begin{definition}
Let $(\Omega ,\mathcal{F}, P)$ be a probability space, where $\mathcal{F}$ is a $\sigma$ algebra on $\Omega$ and $P$ is a probability measure. Let $\eta_t(\omega)$ be a $\R^d$ valued stochastic process with continuous sample paths. A $\eta_t$-RODE version of the family $(E_{\eta} )_{\eta \in \R^d}$ is the family of autonomous equations 
$$
\frac{dx}{dt}=f(x,\eta_t(\omega))
\eqno{(E_{\eta_t} )}
$$
for almost every realization $\omega \in \Omega.$
\end{definition}

The previous way to associate a RODE's equation to a parameter dependent differential equation will be called {\bf RODE's stochastisation} in contrast with the classical {\bf stochastisation} procedure remind in Section \ref{remindstoc} and applied to parameter dependent systems in Section \ref{stochastisationparameter}.\\

A typical example is given by the scalar RODE equation
\begin{equation}
\di\frac{dx}{dt} =-x+\sin (W_t (\omega )),
\end{equation}
where $W_t (\omega )$ is a standard Wiener process. In this case, we have $f(x, \eta )=-x+\sin (\eta )$ and $n=d=1$.\\

The main point is that the stochasticity is precisely related to the choice of the stochastic dynamics for the parameters. This dynamics is the only one which can be experimentally studied (see \cite{Saarinen}, \cite{Fox}).\\

Many results exist with a very general noise called {\bf RODEs with canonical noise}. We prefer here, to restrict our attention to noise which can be modeled using classical stochastic differential equations which is close to the modeling process. In particular, we consider It\^{o}'s version of RODEs, i.e., RODEs equations of the form 
\begin{equation}
\left\lbrace \begin{array}{l}
\frac{dx}{dt}=f(x,\eta_t(\omega)) \\
d\eta_t=g(t,\eta_t)dt + \sigma(t,\eta_t)dW_t
\end{array}
\right.
\end{equation}
We then recover a special class of It\^{o}'s differential equation for which our results of Part I apply.

\begin{remark}[Existence and uniqueness of solutions] If the initial value $x_0$ is an $\R^d$-valued random variable, has a unique pathwise solution $x(t,\omega)$ for every $\omega \in \Omega,$ which will be assumed to exist on the finite time interval $[0,T] .$ Sufficient conditions that guarantee the existence and uniqueness of such solutions are similar to those for ODEs given on \cite[ch2.1]{kloeden2}, that the vector field is at least continuous in both of its variables and the sample paths of the noise $\eta_t$ are continuous. If it satisfies a global Lipschitz condition, then existence on the entire time interval is obtained.
\end{remark}

\subsection{Equilibrium points of RODE's stochastised systems}

We recall that the stochastisation of a linearly parameter dependent differential equation possesses the same set of equilibrium points as the deterministic systems independently of the It\^o or Stratonovich equation. This property fails generically for an arbitrary equation and stochastic perturbation. The RODE's stochastisation is very particular with respect to this property. Indeed, we have:

\begin{lemma}[Persistence of Equilibrium-RODE's case] Let us assume that the sto\-chastic process $\eta_t$ takes its values in some parameter set $\mathcal{P} \subset \R^n$. Such a stochastic process is said $\mathcal{P}$-valued process. We denote by $\mathcal{E}_{\mathcal{P}}$ the intersection of all the set of equilibrium points of the deterministic system for all $p\in \mathcal{P}$. The equilibrium set $\mathcal{E}_{\mathcal{P}}$ is preserved under any $\mathcal{P}$-valued RODE's stochastisation.
\end{lemma}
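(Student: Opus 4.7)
The plan is to exploit the pathwise nature of RODEs together with the definition of $\mathcal{E}_{\mathcal{P}}$ as an intersection, which makes the argument almost immediate. Concretely, I would take an arbitrary point $x_{\star} \in \mathcal{E}_{\mathcal{P}}$ and show that the constant process $X_t(\omega) \equiv x_{\star}$ is a pathwise solution of the RODE associated with $(E_{\eta_t})$, so that $x_{\star}$ belongs to the equilibrium set of the stochastised system.

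First I would unfold the definition: $x_{\star} \in \mathcal{E}_{\mathcal{P}}$ means exactly that $f(x_{\star}, p) = 0$ for every $p \in \mathcal{P}$. Next, I would use the hypothesis that $\eta_t$ is $\mathcal{P}$-valued, i.e.\ $\eta_t(\omega) \in \mathcal{P}$ for every $t \geq 0$ and almost every $\omega \in \Omega$. Combining these two facts gives $f(x_{\star}, \eta_t(\omega)) = 0$ for all $t\geq 0$ and almost every $\omega$. Therefore the constant function $t \mapsto x_{\star}$ satisfies the ODE $(E_{\eta_t})$ pathwise, which is precisely the definition of an equilibrium point of the RODE.

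I do not expect any serious obstacle: the result is essentially tautological, owing to the fact that the RODE stochastisation only substitutes a $\mathcal{P}$-valued process into the parameter slot, without altering the pointwise structure of $f$. The only subtlety worth flagging is the difference with the It\^o/Stratonovich setting: here there is no diffusion term acting on $x$, hence no It\^o correction to check, and no need to separate drift from diffusion as in Lemma \ref{persistenceexternalequilibrium}. I would close the proof by remarking that this argument is what forces the intersection, rather than union, structure of $\mathcal{E}_{\mathcal{P}}$: if $\eta_t$ attains values outside a given $\mathcal{E}_p$, then the corresponding equilibrium of that slice is in general destroyed, so only the common equilibria survive.
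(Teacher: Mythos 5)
Your proof is correct and follows essentially the same route as the paper's: unfold $\mathcal{E}_{\mathcal{P}}$ as the intersection so that $f(x_{\star},p)=0$ for all $p\in\mathcal{P}$, then use the $\mathcal{P}$-valuedness of $\eta_t$ to get $f(x_{\star},\eta_t(\omega))=0$ pathwise. Your closing remarks on the absence of an It\^o correction and on why the intersection structure is forced are accurate additions, but the core argument is identical.
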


\begin{proof}
Let $p_0\in \R^n$ be given. The set of equilibrium points of the deterministic system for $p=p_0$ is denoted $\mathcal{E}_{p_0}$. A point $x_{\star} \in \mathcal{E}_{p_0}$ satisfies the equation $f(x_{\star} ,p_0 )=0$. Assume that $x_{\star} \in \mathcal{E}_{\mathcal{P}}$, i.e. that $x_{\star} \in \mathcal{E}_{p_0}$ for all $p_0 \in \mathcal{P}$ then $f(x_{\star} ,p_0 )=0$ for all $p_0 \in \mathcal{P}$. As $\eta_t (\omega ) \in \mathcal{P}$ for all $t\in \R$ and $\omega \in \Omega$, we deduce that if $x_{\star} \in \mathcal{E}_{\mathcal{P}}$ then $f(x_{\star} ,\eta_t (\omega ) )=0$ for all $t\in \R$ and $\omega \in \Omega$. As a consequence, the set $\mathcal{E}_{\mathcal{P}}$ is preserved under any $\mathcal{P}$-valued RODE's stochastisation.
\end{proof}

The previous Theorem is very general and the condition to find a common equilibrium point for a large class of parameters seems to be out of reach. However, a classical example where such a set is not empty concerns linear systems with a specific dependence on the parameter:

\begin{lemma}
Let us assume that $f(x,p)$ is linear with respect to $x$. Then, for each $p\in \R^n$, equilibrium points of the deterministic system belongs to the vector space $\mathcal{E}_p$ defined by the kernel of $f(x,p)$. If there exists an open set $\mathcal{P}\subset \R^n$ such that $\mathcal{E}_p =\mathcal{E}_{p'}$ for all $p,p'\in \mathcal{P}$ then $\mathcal{E}_{\mathcal{P}} =\mathcal{E}_p$ for any $p\in \mathcal{P}$.
\end{lemma}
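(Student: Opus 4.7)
The plan is to verify the statement by combining the linearity of $f$ in $x$ with the elementary set-theoretic fact that the intersection of a family of mutually equal sets is any one of them. The openness of $\mathcal{P}$ plays no role in the conclusion itself; it is only inherited from the ambient hypothesis under which one wishes to invoke the lemma (in practice, openness ensures one is working with a nontrivial range of parameters, but the argument is purely algebraic).

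First, I would record the consequence of linearity. Since $f(\cdot,p):\R^n\to\R^n$ is a linear map for each fixed $p$, the equation $f(x,p)=0$ defines exactly the kernel of this endomorphism. Hence $\mathcal{E}_p=\ker f(\cdot,p)$ is a linear subspace of $\R^n$; this gives the first sentence of the statement.

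Next, I would unfold the definition of $\mathcal{E}_{\mathcal{P}}$ from the preceding lemma: $\mathcal{E}_{\mathcal{P}} = \bigcap_{p\in\mathcal{P}}\mathcal{E}_p$. Under the hypothesis $\mathcal{E}_p=\mathcal{E}_{p'}$ for all $p,p'\in\mathcal{P}$, fix any $p_0\in\mathcal{P}$ and denote the common subspace by $E$, so that $\mathcal{E}_p=E$ for every $p\in\mathcal{P}$. Then
\begin{equation*}
\mathcal{E}_{\mathcal{P}} \;=\; \bigcap_{p\in\mathcal{P}} \mathcal{E}_p \;=\; \bigcap_{p\in\mathcal{P}} E \;=\; E \;=\; \mathcal{E}_{p_0},
\end{equation*}
which gives the desired equality $\mathcal{E}_{\mathcal{P}}=\mathcal{E}_p$ for any $p\in\mathcal{P}$.

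There is essentially no obstacle: the only subtlety worth flagging is that $\mathcal{P}$ is required to be nonempty (which is automatic from openness), since otherwise the intersection $\bigcap_{p\in\mathcal{P}}\mathcal{E}_p$ would collapse to all of $\R^n$ by the usual convention, making the conclusion vacuous or false depending on taste. Once nonemptiness is ensured, the proof reduces to the two bookkeeping steps above.
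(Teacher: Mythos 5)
Your proof is correct, and the paper in fact omits the proof entirely, remarking only that it ``follows easily from the assumptions''; your two steps (linearity gives $\mathcal{E}_p=\ker f(\cdot,p)$, and the intersection of mutually equal sets is any one of them) are exactly the intended easy argument. The remark about nonemptiness of $\mathcal{P}$ is a sensible extra precaution but not something the paper addresses.
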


We omit the proof which follows easily from the assumptions. One can wonder if we can find explicit examples where this result can be of some used.

\subsection{RODEs stochastisation of the Larmor equation}
\label{larmor1}

The Larmor equation possesses $S^2$ as an invariant manifold and two equilibrium point by restruction on $S^2$ given by $\pm b/\parallel b \parallel$. Using the previous results, one obtain:

\begin{lemma}
\label{equilibriumlarmor}
Let $b_0 \in \R^3$ be fixed. Let $\eta_t$ be a stochastic process in $\R^3$ such that $\eta_t (\omega ) \in \langle b_0 \rangle$. Then any $\langle b_0 \rangle$ valued $\eta_t$ RODEs stochastisation of the Larmor equation possesses as a set of equilibrium points 
\begin{equation}
\mathcal{E}_{\langle b_0 \rangle} =\langle b_0 \rangle .
\end{equation}
\end{lemma}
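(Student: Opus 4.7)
The claim is a direct application of the RODEs persistence-of-equilibrium lemma proven just above, so my task reduces to identifying the intersection $\mathcal{E}_{\langle b_0\rangle}=\bigcap_{p\in\langle b_0\rangle}\mathcal{E}_p$ for the Larmor family.

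First I would compute each deterministic equilibrium set. For the Larmor equation with a fixed parameter $p\in\R^3$, the vector field is $f(\mu,p)=\mu\wedge p$; thus $f(\mu,p)=0$ if and only if $\mu$ and $p$ are collinear, giving $\mathcal{E}_p=\langle p\rangle$ for every $p\neq 0$. This is exactly the identification already recalled in Section \ref{larmor} for the deterministic equation.

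Next I would evaluate the intersection over $p\in\langle b_0\rangle$. Assuming $b_0\neq 0$ (the interesting case, since otherwise the Larmor equation is trivial), every nonzero $p\in\langle b_0\rangle$ has the form $p=\lambda b_0$ with $\lambda\neq 0$, whence $\langle p\rangle=\langle b_0\rangle$ and therefore $\mathcal{E}_p=\langle b_0\rangle$. The intersection over all such $p$ is then $\langle b_0\rangle$ itself; the value $p=0$, if included, contributes $\mathcal{E}_0=\R^3$ and leaves the intersection unchanged.

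Finally I would invoke the persistence lemma: since $\eta_t$ is by assumption $\mathcal{P}$-valued with $\mathcal{P}=\langle b_0\rangle$, the set $\mathcal{E}_{\mathcal{P}}$ is preserved under any such RODEs stochastisation, and the computation above gives $\mathcal{E}_{\mathcal{P}}=\langle b_0\rangle$. There is no genuine obstacle; the only point requiring care is the observation that collinearity collapses every fiber $\langle p\rangle$ attached to a nonzero $p\in\langle b_0\rangle$ to the single line $\langle b_0\rangle$, which is what forces the intersection to remain one-dimensional rather than shrinking to $\{0\}$.
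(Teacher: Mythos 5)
Your reduction to the general RODE persistence lemma and your computation of the intersection $\bigcap_{p\in\langle b_0\rangle}\mathcal{E}_p=\langle b_0\rangle$ are both correct, and they do establish the inclusion $\langle b_0\rangle\subseteq\mathcal{E}$, where $\mathcal{E}$ denotes the equilibrium set of the $\eta_t$-RODE Larmor equation. But the lemma asserts an equality of sets, and the persistence lemma you invoke only says that $\mathcal{E}_{\mathcal{P}}$ \emph{remains} a set of equilibria after stochastisation; it says nothing about whether the stochastised system acquires equilibria outside $\mathcal{E}_{\mathcal{P}}$. The paper's proof is a direct two-sided computation: it first observes that $\mu\wedge\eta_t(\omega)=0$ for every $\mu\in\langle b_0\rangle$, which gives your inclusion, and then argues conversely that an equilibrium $\mu$ must satisfy $\mu\wedge\eta_t(\omega)=0$ for all $t$ and $\omega$, which, since $\eta_t(\omega)\in\langle b_0\rangle$, forces $\mu\in\langle b_0\rangle$. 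That second half is absent from your argument.

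The converse is not vacuous: if, say, $\eta_t(\omega)\equiv 0$ (a perfectly admissible $\langle b_0\rangle$-valued process), then every point of $\R^3$ is an equilibrium and the claimed equality fails; so the reverse inclusion genuinely uses that $\eta_t(\omega)$ takes some nonzero value in $\langle b_0\rangle$ (a nondegeneracy hypothesis the paper also leaves tacit). To repair your proof, add the following step: if $\mu$ is an equilibrium, then $\mu\wedge\eta_{t_0}(\omega_0)=0$ for some $(t_0,\omega_0)$ with $\eta_{t_0}(\omega_0)=\lambda b_0$ and $\lambda\neq 0$, hence $\mu\wedge b_0=0$ and $\mu\in\langle b_0\rangle$. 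With that addition your route and the paper's coincide in substance, yours merely packaging the forward inclusion through the general persistence lemma rather than recomputing it.
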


It must be noted that up to now, we do not restrict the flow of the RODE stochastized on the sphere $S^2$ as we have not proved for the moment that this restriction keep sense in the stochastic case. We will return to this problem in the next Section.

\begin{proof}
Let $b_0 \in \R^3$ fixed and  $\eta_t$ be a $\langle b_0 \rangle$-valued stochastic process. We do not provide an explicit equation for $\eta_t$. Let us consider the $\eta_t$-RODE Larmor equation
\begin{equation}
\di\frac{d\mu}{dt} = \mu \wedge \eta_t .
\end{equation}
As $\eta_t$ is $\langle b_0 \rangle$ valued for any $\mu \in \langle b_0 \rangle$ we have 
\begin{equation}
\mu \wedge \eta_t (\omega ) =0,\ \ \forall \omega \in \Omega\ \mbox{\rm and}\ t\in \R .
\end{equation}
As a consequence, any point in $\langle b_0 \rangle$ is an equilibrium point of the $\eta_t$-RODE Larmor equation. Moreover, a point $\mu\in \R^3$ is an equilibrium point of the $\eta_t$-RODE Larmor equation if and only if $\mu \wedge \eta_t (\omega )=0$ for all $t\in \R$ and $\omega \in \Omega$. As $\eta_t (\omega )\in \langle b_0 \rangle$ this implies that $\mu \in \langle b_0 \rangle$. This concludes the proof.
\end{proof}

The previous Lemma has strong consequences on the possible models one can construct as long as one want to preserve the set of equilibrium points. Indeed, one has to construct the $\eta_t$ process such that it is $\langle b_0 \rangle$ valued. We will find this problem again about the construction of a stochastic Landau-Lifschitz equation in Section \ref{landau}.
 
\subsection{Invariance using RODE's}

The main property of RODEs version of parameters family of ordinary differential equations having an invariant manifold is that this manifold is preserved under the stochastisation process. 

\begin{theorem}[Persistence of invariance-RODEs case]
\label{persistencerode}
Let us consider a parameters family of equations of the form $(E_{\eta} )_{\eta\in \R^d}$ such that for any $\eta\in \R^d$ the manifold 
$$M=\{ x\in \R^n : F(x)=0\}$$
is invariant.
Then, for any $\eta_t$ stochastic process, the $\eta_t$-RODE version of $(E_{\eta} )_{\eta\in \R^d}$ preserves invariance of the manifold $M$.
\end{theorem}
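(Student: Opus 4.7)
The plan is to exploit the fact that a RODE is solved pathwise: for almost every $\omega\in\Omega$, the sample path $t\mapsto \eta_t(\omega)$ is a continuous curve in $\mathbb{R}^d$, so the equation $(E_{\eta_t})$ becomes a genuine non-autonomous ODE $\dot{x}=f(x,\eta_t(\omega))$ in the ambient space $\mathbb{R}^n$. The theorem then reduces to a purely deterministic invariance statement applied $\omega$-by-$\omega$, and no stochastic calculus (no It\^o correction, no Stratonovich chain rule) is needed at all. This is the structural reason the RODE stochastisation is so well-behaved with respect to invariance, in contrast to the external stochastisation procedure of Section \ref{stochastisationparameter}.

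First I would translate the hypothesis into an infinitesimal form. Since for every fixed $\eta\in\mathbb{R}^d$ the manifold $M=\{F=0\}$ is invariant for $(E_\eta)$, the standard tangency characterisation used already in Lemma \ref{larmorinvariance} gives
\begin{equation}
\nabla F(x)\cdot f(x,\eta)=0\qquad\text{for all }x\in M\text{ and all }\eta\in \mathbb{R}^d.
\end{equation}
This single pointwise identity is the only thing we need to carry over to the stochastic setting: it is stable under evaluating $\eta$ at any point, so in particular at $\eta=\eta_t(\omega)$.

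Next, fix $\omega\in\Omega$ for which the sample path $t\mapsto\eta_t(\omega)$ is continuous (this is a full-measure set by assumption), and let $x(\cdot)=x(\cdot,\omega)$ be the pathwise solution of $\dot{x}=f(x,\eta_t(\omega))$ with initial datum $x(0)\in M$, whose existence and uniqueness is granted by the remark following the definition of RODEs. Set $\varphi(t):=F(x(t))$; $\varphi$ is differentiable in $t$ with
\begin{equation}
\dot{\varphi}(t)=\nabla F(x(t))\cdot f\bigl(x(t),\eta_t(\omega)\bigr).
\end{equation}
The tangency identity above shows that the right-hand side vanishes whenever $x(t)\in M$, which is precisely the non-autonomous Nagumo condition for the time-dependent vector field $(t,x)\mapsto f(x,\eta_t(\omega))$. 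Since this field is continuous in $t$ (because $\eta_t(\omega)$ is) and locally Lipschitz in $x$, the classical Nagumo invariance theorem applies and forces $x(t)\in M$ for all $t$ in the interval of existence. Alternatively, one can argue by uniqueness: the restricted ODE on $M$ admits a local solution, which by the uniqueness in $\mathbb{R}^n$ must coincide with $x(t)$.

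The only genuinely delicate point is justifying the Nagumo step under the low regularity in $t$ that a generic stochastic process $\eta_t(\omega)$ provides (continuous but typically nowhere differentiable, e.g.\ Brownian paths). This is however standard: Nagumo's theorem only requires the driving vector field to be continuous in $t$ and (locally) Lipschitz in $x$, both of which hold here. Once this is in place, taking the union over a full-measure set of $\omega$ yields invariance of $M$ almost surely, which is exactly the RODE-invariance statement of the theorem.
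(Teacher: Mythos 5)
Your proof is correct and follows the same pathwise strategy as the paper's: fix $\omega$, reduce to a deterministic non-autonomous ODE, and use the tangency identity $\nabla F(x)\cdot f(x,\eta)=0$ on $M$. You are in fact more careful than the paper at the one delicate step --- the paper jumps from the tangency identity (valid only for $x\in M$) to $\frac{d}{dt}F(x_t)=0$ along the whole solution, which is circular as stated, whereas you correctly close this gap by invoking the non-autonomous Nagumo invariance theorem (or, alternatively, pathwise uniqueness against the restricted flow on $M$).
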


\begin{proof}
Fix a sample path, i.e., we look at our equation as a deterministic equation for each $\omega \in \Omega$. Then we can study the invariance of the submanifold $M$ by studying the derivative of $F(x_t)$ in the usual sense. We obtain that 
\begin{equation}
\di\frac{d}{dt} (F(x_t )) =\nabla F (x_t ) . \di\frac{dx_t}{dt}=\nabla F (x_t ).f(x_t ,\eta_t ).
\end{equation}
As $M$ is invariant for the family of differential equations $(E_{\eta} )_{\eta\in \R^d}$, we have 
\begin{equation}
\nabla F(x ). f(x ,\eta )=0\ \forall x\in M,\ \eta \in \R^d .
\end{equation}
We deduce that $\di\frac{d}{dt} (F(x_t ))=0$ for all solutions of the RODE equation. As a consequence, the manifold $M$ is invariant.
\end{proof}

This Theorem is in strong contrast with our previous result on {\bf external It\^o perturbations} of differential equations where invariance is generically destroyed. More or less, one can say that {\bf internal It\^o perturbations} modeled using RODEs framework preserve invariance and external one destroy it. As a consequence, we see that the precise nature of the noise in the modeling is fundamental.

\subsection{Invariance of $S^2$ for RODEs Larmor equations}
\label{larmor2}

We return on the example studied in Paragraph \ref{larmor1}. Using the previous Theorem, we easily have:

\begin{lemma}[Invariance of $S^2$ for RODEs Larmor equations] 
\label{invariancelarmor} 
Any RODEs stochastisation of the Larmor equation preserves the invariance of $S^2$.
\end{lemma}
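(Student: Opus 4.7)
The plan is to apply Theorem \ref{persistencerode} (Persistence of invariance, RODEs case) directly to the Larmor family $(E_b)_{b \in \R^3}$ given by $d\mu/dt = \mu \wedge b$. To do so, I only need to verify the two hypotheses of that theorem: that $S^2$ can be written as the zero set of a smooth function $F$, and that $S^2$ is invariant under each deterministic equation $E_b$ for every $b \in \R^3$.

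First I would set $F(\mu) = \|\mu\|^2 - 1$, so that $S^2 = \{\mu \in \R^3 : F(\mu)=0\}$ and $\nabla F(\mu) = 2\mu$. Next I would compute, for arbitrary $\mu \in S^2$ and $b \in \R^3$, the quantity
\begin{equation}
\nabla F(\mu) \cdot (\mu \wedge b) = 2\, \mu \cdot (\mu \wedge b) = 0,
\end{equation}
which vanishes since $\mu \wedge b$ is orthogonal to $\mu$. This is exactly the invariance condition for the deterministic Larmor equation with parameter $b$, and it holds uniformly in the parameter. Hence the family $(E_b)_{b\in \R^3}$ satisfies the hypothesis of Theorem \ref{persistencerode}.

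Finally, I would invoke Theorem \ref{persistencerode}: for any stochastic process $\eta_t$ taking values in $\R^3$, the $\eta_t$-RODE version
\begin{equation}
\frac{d\mu}{dt} = \mu \wedge \eta_t(\omega)
\end{equation}
preserves the invariance of $S^2$ almost surely. This concludes the proof.

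There is essentially no obstacle here: the statement is a direct corollary of the persistence theorem, and the only computation required is the elementary identity $\mu \cdot (\mu \wedge b) = 0$. The only mild subtlety worth noting explicitly in the final write-up is that the argument works pathwise, so one should emphasize that the invariance holds for almost every realization $\omega \in \Omega$, in line with the pathwise interpretation of RODEs used throughout the section.
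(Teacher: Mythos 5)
Your proposal is correct and follows exactly the route the paper intends: the lemma is stated there as an immediate consequence of Theorem \ref{persistencerode}, and your verification that $\nabla F(\mu)\cdot(\mu\wedge b)=2\,\mu\cdot(\mu\wedge b)=0$ uniformly in the parameter $b$ is precisely the hypothesis check the paper leaves implicit. The only addition in the paper is a later alternative argument via the Casimir function $\|z\|^2/2$ of the Poisson structure, but your pathwise argument is the primary one.
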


Using this result, we can precise the result about equilibrium points obtained in Paragraph \ref{larmor1}:

\begin{lemma}
\label{classlarmor}
Let $b_0 \in \R^3$ be fixed. Let $\eta_t$ be a stochastic process in $\R^3$ such that $\eta_t (\omega ) \in \langle b_0 \rangle$. Then any $\langle b_0 \rangle$ valued $\eta_t$ RODEs stochastisation of the Larmor equation restricted to $S^2$ possesses as a set of equilibrium points 
\begin{equation}
\mathcal{E}_{\langle b_0 \rangle} =\left\{ \pm b_0 /\parallel b_0 \parallel \right \} .
\end{equation}
\end{lemma}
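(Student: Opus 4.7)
The plan is to combine the two preceding lemmas, Lemma \ref{equilibriumlarmor} and Lemma \ref{invariancelarmor}, and then intersect the resulting equilibrium set with the sphere. The conceptual idea is simple: once we know that $S^2$ is preserved by the RODE flow and that the set of equilibria of the unrestricted RODE Larmor equation is exactly the line $\langle b_0 \rangle$, the equilibria of the restricted system are forced to lie in $\langle b_0\rangle \cap S^2$.

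First I would invoke Lemma \ref{invariancelarmor}, which guarantees that any RODEs stochastisation of the Larmor equation leaves $S^2$ invariant. This legitimises the phrase \emph{restricted to $S^2$}: the vector field $\mu \mapsto \mu\wedge\eta_t(\omega)$ is tangent to $S^2$ pathwise, so for every initial datum $\mu_0\in S^2$ the solution remains on $S^2$ almost surely, and the restricted flow is well-defined. Consequently a point $\mu_\star\in S^2$ is an equilibrium of the restricted system if and only if it is an equilibrium of the ambient system in $\R^3$.

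Next I would apply Lemma \ref{equilibriumlarmor} to identify the set of ambient equilibria with $\langle b_0\rangle$. Thus the equilibrium set of the restricted RODE Larmor equation equals $\langle b_0\rangle \cap S^2$. The final step is the elementary observation that a vector $\mu = \lambda b_0$ with $\lambda\in\R$ has unit norm if and only if $\lambda = \pm 1/\|b_0\|$, giving exactly $\mathcal{E}_{\langle b_0\rangle} = \{\pm b_0/\|b_0\|\}$.

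There is no real obstacle here; the only subtlety worth flagging is the logical point that the equilibrium condition $\mu \wedge \eta_t(\omega) = 0$ must hold for all $t$ and almost all $\omega$ simultaneously, so one must ensure that the argument of Lemma \ref{equilibriumlarmor} uses the full range of $\eta_t$ (namely $\langle b_0\rangle$) rather than a single realisation. Since that has already been established in Lemma \ref{equilibriumlarmor}, the present statement follows by the two-line combination above.
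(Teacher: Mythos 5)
Your argument is correct and follows exactly the paper's own route: invoke Lemma \ref{invariancelarmor} to make sense of the restriction to $S^2$, then Lemma \ref{equilibriumlarmor} to identify the ambient equilibria with $\langle b_0\rangle$, and intersect with the sphere. The only difference is that you spell out the elementary final step ($\lambda b_0\in S^2$ iff $\lambda=\pm 1/\parallel b_0\parallel$), which the paper leaves implicit.
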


\begin{proof}
By Lemma \ref{invariancelarmor}, if $\mu_0 \in S^2$ then $\mu_t \in S^2$ for all $t\in \R$. As a consequence, the restriction of any $\eta_t$-RODE stochastisation of the Larmor equation has a sense. Let us then consider the restriction of a $\langle b_0 \rangle$ valued $\eta_t$ RODE Larmor equation. By Lemma \ref{equilibriumlarmor}, we deduce that points in $\mathcal{E}_{\langle b_0 \rangle} \cap S^2$ are the only equilibrium points of the RODE Larmor equation. This concludes the proof.
\end{proof}

As a consequence, one can easily find a class of RODE stochastized Larmor equations preserving the essential features of the deterministic equation. We will see if some essential qualitative behavior of the equation as for example the stability nature of the equilibrium points or the asymptotic behavior of the solutions can be preserved under some RODE stochastisation.

\section{Stochastic persistence: stability}
\label{stabilitypart}

We study the persistence problem for what concerns the stability/instability of equilibrium points. As we will see, the behavior of the stochastically perturbed system will depend on the internal or external nature of the noise. We begin by recalling some well known results on stable/unstable equilibrium point for deterministic systems and in particular  Lyapunov theory. We then recall stochastic analogue of these notions. We finally give persistence results for external or internal perturbations. 

\subsection{Stability in the deterministic case}
\label{remindstabilitydeter}

\subsubsection{Stable and asymptotically stable equilibrium}

Let us consider an autonomous ordinary differential equation of the form
$$
\di\frac{dx}{dt} =f(t,x),\ t\in \R,\ x\in \R^n .
\eqno{(E)}
$$
An {\bf equilibrium point} or {\bf steady state} for (E) is a point $x_0 \in \R^n$ such that $f(t, x_0 )=0$ for all $t\in \R$.\\

Stability of a dynamical system means insensitivity of steady states of a system to small changes in the initial states. We are leaded to the following definition of a {\bf stable} equilibrium point:

\begin{definition}
The equilibrium solution $x_t=0$ of \eqref{DE} is said to be stable, if for every $\varepsilon > 0,$ and $t _{0}$ there exists $r=r(\varepsilon,t _{0})>0,$ such that  
$$ \sup_{t\geq t _{0}}\vert x_{t}(x_0)\vert \leq \varepsilon $$
whenever $\vert x_0\vert< r.$ Otherwise, it is said to be unstable.
\end{definition}
 
A more stronger notion is given by the {\bf asymptotic stability} of an equilibrium point. 
 
\begin{definition}
The equilibrium solution $x_t=0$ of \eqref{DE} is said to be asymptotically stable if it is stable, and 
$$ \lim_{t\rightarrow \infty} x_t(x_{0}) =0 $$
for all $x_0$ in some neighborhood of $x=0.$ 
\end{definition} 

A very useful characterization of stable and unstable equilibrium points is given thanks to {\bf  Lyapunov functions}. We remind some classical results in the following Section.

\subsubsection{Lyapunov functions and stability}
 
The idea in Lyapunov stability is that if one can choose a function $V=$ constant represents tubes surrounding the line $x=0$ such that all solutions cross through the tubes toward $x=0,$ the system is stable, while if they cross in the other direction the system is unstable. 

\begin{definition}[ Lyapunov function] 
Let assume that there exist a positive definite function $V(t,x),$ that is continuously differentiable with respect to $x$ and to $t$ throughout a closed neighborhood $U$ of $x=0$, such that $V(t,0)=0$ for all $t\geq t_0$. This function will be called a  Lyapunov function.
\end{definition}

It must be noted that the derivative of $V(t,x_t )$ with respect to $t$ along a given solution of the equation \eqref{DE} is given by 
\begin{equation}
\dot {V} (t,x)= \frac{d V}{d t} = \frac{\partial V}{\partial t}+\sum_{i=1}^n f_i(t,x)\frac{\partial V}{\partial x_i} .
\end{equation}

Depending on the properties of the  Lyapunov function, one obtain stable or asymptotically stable equilibrium points.
 
\begin{theorem}
Let us assume that there exist $V(t,x)$ a positive definite  Lyapunov function in a neighborhood $U$ of the set $x=0$. Then, we have:
\begin{itemize}
\item If  $\dot{V}\leq 0$ for all $x\in U$ and $t\geq 0,$ then the equilibrium solution of the equation \eqref{DE} is stable.
\item If the function $V$ has an arbitrarily small upper bound and $\dot{V}$ is negative definite, then the equilibrium solution of the equation \eqref{DE} is asymptotically stable. 
\end{itemize}
\end{theorem}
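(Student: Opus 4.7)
The plan is the classical Lyapunov argument, split into two parts according to the two bullet points. Throughout, I would interpret ``positive definite'' in the standard sense that there exists a continuous function $W_1:U\to\R_+$ with $W_1(0)=0$, $W_1(x)>0$ for $x\neq 0$, and $V(t,x)\geq W_1(x)$ for all $t\geq t_0$, and ``arbitrarily small upper bound'' (decrescent) as the existence of a continuous $W_2:U\to\R_+$ with $W_2(0)=0$ and $V(t,x)\leq W_2(x)$.

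For stability, fix $\varepsilon>0$ small enough that the closed ball $\bar B_\varepsilon\subset U$. By positive definiteness, $\ell(\varepsilon):=\inf\{W_1(x):\|x\|=\varepsilon\}>0$. Using continuity of $V$ at $(t_0,0)$ together with $V(t_0,0)=0$, I would choose $r=r(\varepsilon,t_0)>0$ so small that $V(t_0,x_0)<\ell(\varepsilon)$ whenever $\|x_0\|<r$. The key observation is then that along a solution $x_t$ of (E) starting at $x_0$, the chain rule gives $\frac{d}{dt}V(t,x_t)=\dot V(t,x_t)\leq 0$, so $V(t,x_t)\leq V(t_0,x_0)<\ell(\varepsilon)$ for all $t\geq t_0$ as long as the solution stays in $U$. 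If $\|x_t\|$ ever reached $\varepsilon$ one would obtain $V(t,x_t)\geq W_1(x_t)\geq \ell(\varepsilon)$, a contradiction. Hence $\|x_t\|<\varepsilon$ for all $t\geq t_0$, which is exactly the definition of stability.

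For asymptotic stability I would first use the previous step to confine the trajectory to a compact set $\bar B_\varepsilon$. The function $t\mapsto V(t,x_t)$ is non-negative and non-increasing by $\dot V\leq 0$, hence converges to some $L\geq 0$. The goal is to show $L=0$ and then conclude $x_t\to 0$. Suppose for contradiction $L>0$. Using the decrescent bound $V(t,x)\leq W_2(x)$, the relation $V(t,x_t)\geq L$ implies $W_2(x_t)\geq L$, and since $W_2$ is continuous with $W_2(0)=0$ this forces $\|x_t\|\geq\delta$ for some $\delta>0$ uniformly in $t$. On the annular region $\delta\leq\|x\|\leq\varepsilon$ the negative definiteness of $\dot V$ gives a uniform upper bound $\dot V(t,x_t)\leq -\gamma<0$; integrating yields $V(t,x_t)\leq V(t_0,x_0)-\gamma(t-t_0)\to -\infty$, contradicting $V\geq 0$. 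Hence $L=0$, and then the positive-definite lower bound $W_1(x_t)\leq V(t,x_t)\to 0$ together with the continuity and positive-definiteness of $W_1$ implies $x_t\to 0$.

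The main obstacle, and the reason the decrescent hypothesis appears, is step two: converting the convergence $V(t,x_t)\to L$ into a uniform lower bound on $\|x_t\|$. Without an upper bound of the form $V(t,x)\leq W_2(x)$ independent of $t$, the value $V(t,x_t)$ could remain bounded away from zero even while $x_t$ drifts towards the origin along times where $V(t,\cdot)$ becomes large, breaking the contradiction argument. Everything else is a direct application of the chain rule along solutions and compactness/continuity on a neighborhood of the equilibrium.
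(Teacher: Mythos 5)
The paper does not actually prove this theorem: it is stated as a reminder of the classical Lyapunov stability criteria (with a pointer to the literature), so there is no in-paper argument to compare against. Your proof is the standard textbook argument and it is correct: part one is the usual confinement argument via the level $\ell(\varepsilon)=\inf\{W_1(x):\|x\|=\varepsilon\}$ and monotonicity of $t\mapsto V(t,x_t)$, and part two is the usual contradiction showing the limit $L$ of $V(t,x_t)$ must vanish, where you correctly use the decrescent bound $V(t,x)\leq W_2(x)$ to turn $V(t,x_t)\geq L$ into a uniform lower bound on $\|x_t\|$, and the uniform-in-$t$ negative definiteness of $\dot V$ on the resulting annulus to drive $V$ to $-\infty$. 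Your closing remark on why the ``arbitrarily small upper bound'' hypothesis is indispensable is also accurate; the only (standard, harmless) omissions are the continuation argument guaranteeing the solution exists for all $t\geq t_0$ once it is trapped in a compact subset of $U$, and the explicit statement that ``negative definite'' for $\dot V$ is meant uniformly in $t$, i.e.\ $\dot V(t,x)\leq -W_3(x)$ for a positive definite $W_3$, which is exactly what your annulus estimate requires.
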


When $V$ is independent of the time variable $t$, the previous criterion reduces to the classical {\bf geometric criterion}: if $\Delta V .f \leq 0$ meaning that for $c_0 >0$ sufficiently small the vector field associated to the equation \eqref{DE} points always toward the inside of the area whose boundary is given by $V =c_0$ and containing the equilibrium point $0$ or is tangent to this boundary.\\

\subsubsection{Lyapunov functions and instability}

A function is said to have an arbitrarily small upper bound if there exists a positive definite function $u(x)$ such that
$$V(t,x)\leq u(x), \text{ for all } t\geq t_0.$$
For the instability, let us reminder only the first Lyapunov theorem on instability where the technique in the second theorem is similar to the first such that $dV=\lambda V+W;\; \lambda > 0$ and $W$ is either identically zero or satisfies the conditions of the first theorem of Lyapunov. (see \cite[p 9]{Krasovskii})
 
\begin{theorem}
 Assuming that there exist a function $V$ for which the total derivative associated to the equation \eqref{DE} is definite, assume that $V$ admits an infinitely small upper bound, and assume that for all values of $t$ above a certain bound there are arbitrarily small values $x_s$ of $x$ for which $V$ has the same sign as its derivative, then the equilibrium is unstable.
\end{theorem}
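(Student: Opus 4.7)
The plan is a contradiction argument in the classical Chetaev/Lyapunov style. Without loss of generality we suppose that $\dot V$ is positive definite on $U$, so that there is a continuous function $W$ with $W(0)=0$ and $W(x)>0$ for $x\neq 0$ satisfying $\dot V(t,x)\geq W(x)$ on $U\times[t_0,\infty)$; the hypothesis then supplies, for every $\delta>0$ and every $t_1\geq t_0$, a point $x_s$ with $\abs{x_s}<\delta$ and $V(t_1,x_s)>0$. We suppose, toward a contradiction, that the origin is stable. Fix $\varepsilon>0$ so small that the closed ball $\overline{B_\varepsilon}$ lies in $U$; by stability there is $r>0$ such that $\abs{x_0}<r$ implies $\abs{x_t(x_0)}\leq\varepsilon$ for all $t\geq t_0$.

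Using the ``infinitely small upper bound'' assumption, choose a continuous positive definite $u$ with $V(t,x)\leq u(x)$ on $U\times[t_0,\infty)$. Since $u(x)\to 0$ as $\abs{x}\to 0$, for any given threshold $V_0>0$ there is $\eta>0$ with $u(x)<V_0$ whenever $\abs{x}<\eta$; hence $V(t,x)<V_0$ on the small ball $\abs{x}<\eta$, uniformly in $t$. Now use the instability hypothesis to pick an initial state $x_0$ with $\abs{x_0}<\min(r,\eta)$ and $V(t_0,x_0)=V_0>0$. Along the corresponding trajectory $x_t$, one has $\dot V\geq 0$, so $V(t,x_t)\geq V_0$ for all $t\geq t_0$; therefore the trajectory cannot reenter the ball $\abs{x}<\eta$, i.e. $\abs{x_t}\geq \eta$ for all $t\geq t_0$.

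Combined with the stability bound $\abs{x_t}\leq\varepsilon$, the trajectory is confined to the compact annulus $\eta\leq\abs{x}\leq\varepsilon$ on which $W$ attains a strictly positive minimum $m>0$. Consequently $\dot V(t,x_t)\geq m$, and integrating gives $V(t,x_t)\geq V_0+m(t-t_0)$, which tends to $+\infty$. But from $V(t,x_t)\leq u(x_t)\leq \max_{\abs{x}\leq\varepsilon} u(x)<\infty$ the quantity $V(t,x_t)$ is bounded above uniformly in $t$, which is the desired contradiction; thus the equilibrium is unstable.

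The only delicate point is the dichotomy provided by the ``infinitely small upper bound'' hypothesis, which is what forces the trajectory to stay in the annulus rather than drift back toward the origin; once this is in place the rest is routine integration. The adaptation to the case where $\dot V$ is negative definite (and $V$ takes negative values at arbitrarily small points) is symmetric, replacing $V$ by $-V$.
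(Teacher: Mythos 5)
The paper does not actually prove this statement: it is quoted as Lyapunov's first instability theorem with a pointer to Krasovskii (p.~9), so there is no in-paper argument to compare against. Your proof is the standard textbook proof of that theorem and the overall logic is sound: use the sign hypothesis to start at a point where $V(t_0,x_0)=V_0>0$, use $\dot V\geq 0$ to keep $V\geq V_0$, use the infinitesimal upper bound to conclude the trajectory stays out of a small ball around the origin, trap it (via the assumed stability) in a compact annulus where $\dot V\geq m>0$, and integrate to contradict the boundedness of $V$ there.

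One step is written in the wrong order and, as stated, is self-contradictory: you first fix $V_0$, choose $\eta$ so that $V(t,x)<V_0$ for $\abs{x}<\eta$, and then ask for $x_0$ with $\abs{x_0}<\min(r,\eta)$ \emph{and} $V(t_0,x_0)=V_0$ --- but no such $x_0$ can exist, since on $\abs{x}<\eta$ you have just arranged $V<V_0$. The repair is the usual quantifier swap: first pick $x_0$ with $\abs{x_0}<r$ and $V(t_0,x_0)>0$ (the sign hypothesis guarantees this for arbitrarily small $x_0$), set $V_0:=V(t_0,x_0)$, and only then choose $\eta>0$ with $u(x)<V_0$ on $\abs{x}<\eta$; the conclusion that the trajectory never enters $\abs{x}<\eta$ then follows from $V(t,x_t)\geq V_0$. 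With that reordering the argument is complete. A last cosmetic point: the paper's definition of ``arbitrarily small upper bound'' gives only $V(t,x)\leq u(x)$, not $\abs{V(t,x)}\leq u(x)$, so in the symmetric case ($\dot V$ negative definite) you should say explicitly that the hypothesis is applied to $-V$.
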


\subsection{Stochastic stability}

In stochastic systems, it is possible to defined a several types of stochastic stability corresponding to the deferents types of convergence of a stochastic process as stability in probability, $p$-stability, exponential $p$-stability, and the stability almost surely in any of the last senses.

\subsubsection{Definitions}

In this paper we are interested to the stability in probability and asymptotic stability in probability, which be called stochastic stability, where the technique in this case is similar to the Lyapunov stability of the deterministic case.\\

If we assume that $f(t,0)=\sigma(t,0)=0,$ for each $t\geq 0,$ then $x_t(0)=0$ is the unique solution of the stochastic system\eqref{IE} with initial state $x_0=0$ which will be called the equilibrium solution, whose stability is be examined. 
For more details, we refer to the books by Khasminskii \cite{Khasminskii} and Arnold \cite[p.179-184]{Arnold}.

\begin{definition}
 The equilibrium solution of \eqref{IE} is said to be stochastically  stable, if for every $\varepsilon >0,\delta >0,$ and $t _{0}$ there exists $r=r(\varepsilon,\delta,t _{0})>0,$ such that for every initial value $x_0$ at time $t_0 $ with $\vert x_0\vert< r$ the solution $x_t(x_0)$ exist for all $t\geq 0$ and satisfies
     $$ P\lbrace \omega :  \sup_{t\geq t _{0}}\vert x_{t}(x_0)\vert >\delta \rbrace<\varepsilon .$$
 Otherwise, it is said to be stochastically unstable.
\end{definition}

\begin{definition}
The equilibrium solution of \eqref{IE} is said to be stochastically asymptotically stable if it is stochastically stable, and if furthermore, for every $\varepsilon$ and $t _{0}$ there exist $r(\varepsilon,t _{0})>0$ such that
 $$\lim_{x_0\rightarrow 0} P \lbrace \omega : \lim_{t\rightarrow \infty} x_t(x_{0}) =0 \rbrace \geq1-\varepsilon, \;\text{ for all } \vert x_0\vert< r.$$
\end{definition}

If in addition to being stable in the (resp. asymptotically stable in probability), the equilibrium has the property that for any finite initial state, then it is globally stable (resp, globally asymptotically stable) in the large.

\subsubsection{Lyapunov functions and stability}

Let $V(t,x)$ a positive-definite function that is twice continuously differentiable with respect to $x,$ continuously differentiable with respect to $t$ throughout a closed domain $U$ in $\mathbb{R}^{+}\times \mathbb{R}^{n},$  except possibly for the set $x=0,$ and equals to zero only at the equilibrium point $x=0.$ The generator operator $L$ of the process defined by the equation \eqref{IE} for any function $V\in C^{2}$ is given as 
$$ LV (t,x)= \frac{\partial V}{\partial t}+\sum_{i=1}^n f_i(t,x)\frac{\partial V}{\partial x_i}+ \frac{1}{2}\sum_{i,j=1}^n  \frac{\partial^2 V}{\partial x_i \partial x_j}\sum^{l}_{k=1}\sigma_{i,k}(t,x_{t})\sigma_{j,k}(t,x_{t}),$$ 
where the derivative of $V$ under It\^o formula is given by
\begin{equation}
\label{dv}
 dV_t= LV(t,x)+ \sum_{i=1}^n\sum_{j=1}^l \frac{\partial V}{\partial x_i}\sigma_{i,j}(t,x_{t})dW^{j}_t 
\end{equation}  
A stable system should have the property that $V_t$ does not increase, that is $dV_t\leq 0.$ This would mean that the ordinary definition of stability holds for each single trajectories $x(\omega).$ However, because of the presence of the fluctuation term in \eqref{DE}, the condition $ dV_t\leq 0$ is formulated as $ \mathbb{E}[dV_t]\leq 0,$ which reduces to the form $ dV_t\leq 0$ in the deterministic case if $\sigma$ vanishes. Since
$$\mathbb{E}[dV_t]= LV_t dt,$$
the condition in the Lyapunov theorem is given as 
$$ LV_t(t,x)\leq 0, \quad \text{for all } t\geq t_0, x\in \mathbb{R}^{n}.$$ 

\begin{theorem}
\label{stability}
Let us assume that there exist a  Lyapunov function $V(t,x)$ in a neighborhood $U$ of the set $x=0$. Then, we have:
\begin{itemize}
\item If $LV \leq 0$ for all $x\in U$ and $t\geq 0,$ then the equilibrium solution of \eqref{IE} is stochastically stable.
\item If in addition, $V$ has an arbitrarily small upper bound and $LV$ is negative definite, then the zero solution is stochastically asymptotically stable.
\end{itemize} 
\end{theorem}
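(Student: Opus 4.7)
The plan is to apply the It\^o formula \eqref{dv} to $V(t,x_t)$ and exploit the supermartingale structure of the resulting process. Writing
$$V(t,x_t) = V(t_0,x_0) + \int_{t_0}^t LV(s,x_s)\,ds + M_t,$$
where $M_t = \sum_{i,j}\int_{t_0}^t \tfrac{\partial V}{\partial x_i}(s,x_s)\,\sigma_{i,j}(s,x_s)\,dW^j_s$ is a local martingale, the hypothesis $LV\leq 0$ makes $V(t,x_t)$ a nonnegative local supermartingale as long as the trajectory remains in $U$. Both conclusions then reduce to classical stopping-time and Markov-inequality arguments on this supermartingale.

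For stochastic stability, fix $\varepsilon>0$ and $\delta>0$ small enough that $\overline{B_\delta(0)}\subset U$, and define the exit time $\tau_\delta = \inf\{t\geq t_0:|x_t|\geq\delta\}$. Positive definiteness of $V$ together with $V(t,0)=0$ ensures that $V_\delta := \inf_{t\geq t_0,\,|x|=\delta} V(t,x)>0$ while continuity of $V(t_0,\cdot)$ at $0$ yields $r>0$ with $V(t_0,x_0)<\varepsilon V_\delta$ whenever $|x_0|<r$. Optional stopping applied to $V(t\wedge\tau_\delta,x_{t\wedge\tau_\delta})$ gives
$$\mathbb{E}\bigl[V(t\wedge\tau_\delta,x_{t\wedge\tau_\delta})\bigr]\leq V(t_0,x_0),$$
and since on $\{\tau_\delta\leq t\}$ one has $V(\tau_\delta,x_{\tau_\delta})\geq V_\delta$ by path continuity, Markov's inequality provides $P(\tau_\delta\leq t)\leq V(t_0,x_0)/V_\delta<\varepsilon$. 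Letting $t\to\infty$ produces exactly the bound on $P(\sup_{t\geq t_0}|x_t|\geq\delta)$ required by stochastic stability.

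For asymptotic stability the added hypotheses let us bootstrap. Writing $LV\leq -W$ for a positive definite $W$, taking expectations in the It\^o representation at the stopping time $t\wedge\tau_\delta$ produces
$$\mathbb{E}\int_{t_0}^{t\wedge\tau_\delta} W(x_s)\,ds \leq V(t_0,x_0).$$
On the event $\{\tau_\delta=\infty\}$, which by the first part has probability exceeding $1-\varepsilon$ when $|x_0|$ is small, the supermartingale $V(t,x_t)$ converges almost surely to a nonnegative limit, and monotone convergence gives $\int_{t_0}^\infty W(x_s)\,ds<\infty$ almost surely. Combining path continuity with positive definiteness of $W$, if $x_{t_n}$ stayed bounded away from $0$ along some sequence $t_n\to\infty$, the integral would diverge, a contradiction; hence $x_t\to 0$ almost surely on this event. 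The arbitrarily small upper bound on $V$ is what allows the threshold $r$ to be chosen uniformly so that both ``non-exit'' and ``convergence to zero'' occur with probability at least $1-\varepsilon$, which is the definition of stochastic asymptotic stability.

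The main technical obstacle is that $V$ and the coefficients of \eqref{IE} are only controlled on the neighborhood $U$, so the process $M_t$ is a priori only a local martingale and the solution may leave $U$. The stopping time $\tau_\delta$ localizes everything cleanly in the first part, but in the asymptotic part one must interlace the stopping with the limit $t\to\infty$ and justify the exchange of expectation and limit --- this is handled by monotone convergence applied to $\int W(x_s)\,ds$ together with the stability estimate of part one, which guarantees that non-exit occurs with probability close to $1$ on a neighborhood of $0$ whose size is controlled by the arbitrarily small upper bound of $V$.
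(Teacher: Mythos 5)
The paper does not actually prove Theorem \ref{stability}: it records the heuristic $\mathbb{E}[dV_t]=LV\,dt\leq 0$ and refers the reader to Khasminskii and Arnold, so your proposal is supplying a proof where the paper gives none. Your argument for the first bullet is the standard and correct one: localize at the exit time $\tau_\delta$ of a ball contained in $U$, observe that $LV\leq 0$ makes the stopped process $V(t\wedge\tau_\delta,x_{t\wedge\tau_\delta})$ a nonnegative supermartingale, and combine optional stopping with the Markov/supermartingale inequality and the lower bound $V_\delta=\inf_{t,\,|x|=\delta}V(t,x)>0$ to estimate $P(\tau_\delta<\infty)$. (The only point worth making explicit is that positive definiteness must be understood uniformly in $t$, i.e.\ $V(t,x)\geq w(|x|)$ for some continuous $w$ vanishing only at $0$; otherwise $V_\delta$ could be zero. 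This is the standard convention and is what the paper intends.)

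The second bullet, however, contains a genuine gap. From $\mathbb{E}\int_{t_0}^{t\wedge\tau_\delta}W(x_s)\,ds\leq V(t_0,x_0)$ you correctly get $\int_{t_0}^{\infty}W(x_s)\,ds<\infty$ a.s.\ on $\{\tau_\delta=\infty\}$, but your next step --- ``if $x_{t_n}$ stayed bounded away from $0$ along some sequence the integral would diverge, hence $x_t\to 0$'' --- is false: a path can make excursions of shrinking duration away from the origin, keeping $\limsup|x_t|>0$ while the time integral of $W(x_s)$ stays finite. The finite integral only yields $\liminf_{t\to\infty}|x_t|=0$. The correct chain, and the actual role of the ``arbitrarily small upper bound'' $V(t,x)\leq u(x)$, is the following: the supermartingale $V(t,x_t)$ converges a.s.\ to some $V_\infty\geq 0$; along a subsequence $t_n$ with $x_{t_n}\to 0$ (which exists by the $\liminf$ statement) one has $V(t_n,x_{t_n})\leq u(x_{t_n})\to 0$, forcing $V_\infty=0$; and then the uniform lower bound $V(t,x)\geq w(|x|)$ forces $|x_t|\to 0$. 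In your write-up the small upper bound is instead invoked to ``choose the threshold $r$ uniformly,'' which is not where it is needed; without the step $V_\infty=0$ the conclusion $x_t\to 0$ does not follow. The repair is routine but it is a missing idea, not a cosmetic omission.
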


\subsubsection{Lyapunov functions and instability}

The analogs of the instability theorem of Lyapunov do not hold in the stochastic systems. Thus we will reminder only the following theorem of Khasminskii \cite{Khasminskii} which will be used to prove the instability of one of the equilibriums of the stochastic Landau-Lifshitz equation.

\begin{theorem}
\label{instability}
Assuming that there exist a function $V(t,x)$ that is twice continuously differentiable with respect to $x$  throughout a subset $U_r={\vert x\vert<0},$ continuously differentiable with respect to $t,$ such that
 $$\lim_{x\rightarrow 0}\inf_{t>O} V (t,x)= \infty,$$
 and 
  $$\sup_{\varepsilon<\vert x\vert < r}LV < 0, \text{ for any } \varepsilon >0 \text{ as } x\in U_r\text{ and } x \neq 0,$$
 Then the equilibrium solution of \eqref{IE} is stochastically unstable.
\end{theorem}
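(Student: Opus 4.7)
The plan is to combine a localization argument with Dynkin's formula, turning the strict negativity of $LV$ into a supermartingale-type estimate and then using the blow-up of $V$ near the origin to push trajectories out of any small neighborhood. I fix $\delta\in(0,r)$ and an initial state $x_0$ with $0<|x_0|<\delta$; the goal is to show $P\{\sup_{t\ge 0}|x_t(x_0)|\ge\delta\}=1$, whence instability at any $\delta'\in(0,\delta)$ follows immediately from the definition. To sidestep the singularity of $V$ at $0$, I introduce for $0<\varepsilon<|x_0|$ the stopping times
\begin{equation*}
\tau_\varepsilon=\inf\{t\ge 0:|x_t|\le\varepsilon\},\qquad \tau_\delta=\inf\{t\ge 0:|x_t|\ge\delta\},\qquad \tau=\tau_\varepsilon\wedge\tau_\delta,
\end{equation*}
so that on the compact annulus $\{\varepsilon\le|x|\le\delta\}\subset U_r$ the function $V$ is $C^2$ and bounded, and the hypothesis supplies a positive constant $c_\varepsilon:=-\sup_{\varepsilon<|x|<r}LV$.

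Applying Dynkin's formula to $V(s\wedge\tau,x_{s\wedge\tau})$ and using $LV\le -c_\varepsilon$ on the annulus yields
\begin{equation*}
\mathbb{E}[V(s\wedge\tau,x_{s\wedge\tau})]\le V(0,x_0)-c_\varepsilon\,\mathbb{E}[s\wedge\tau].
\end{equation*}
Combined with a uniform lower bound on $V$ over the closed annulus, this forces $\mathbb{E}[\tau]<\infty$ and so $\tau<\infty$ almost surely; letting $s\to\infty$ by bounded convergence then gives $\mathbb{E}[V(\tau,x_\tau)]\le V(0,x_0)$. The decisive step is to split on which boundary of the annulus is hit first: on $\{\tau_\varepsilon<\tau_\delta\}$ we have $|x_\tau|=\varepsilon$, hence $V(\tau,x_\tau)\ge M(\varepsilon):=\inf\{V(t,x):t\ge 0,\,|x|=\varepsilon\}$, while on the complement $V(\tau,x_\tau)\ge -K$ by continuity of $V$ on the compact set $\{|x|=\delta\}$. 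Putting the two lower bounds together gives $M(\varepsilon)\,P\{\tau_\varepsilon<\tau_\delta\}\le V(0,x_0)+K$, and the hypothesis $\lim_{x\to 0}\inf_{t>0}V(t,x)=\infty$ forces $M(\varepsilon)\to\infty$, so $P\{\tau_\varepsilon<\tau_\delta\}\to 0$ as $\varepsilon\to 0$.

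To conclude, observe that the event $\{\sup_{t\ge 0}|x_t|<\delta\}=\{\tau_\delta=\infty\}$ is contained, almost surely on $\{\tau<\infty\}$, in $\{\tau_\varepsilon<\tau_\delta\}$ for every $\varepsilon>0$; therefore $P\{\sup_{t\ge 0}|x_t|\ge\delta\}=1$, which negates stochastic stability. The main obstacle I anticipate is precisely the singular behavior of $V$ at the origin: it rules out a direct global application of It\^o's formula and forces the two-sided annular localization rather than the simpler estimate one might hope to make using $\tau_\delta$ alone. A second, related delicate point is the extraction of the quantitative drift constant $c_\varepsilon>0$ from the merely strict inequality $\sup_{\varepsilon<|x|<r}LV<0$ in the hypothesis; this constant is the technical linchpin that secures both $\mathbb{E}[\tau]<\infty$ and the final comparison $P\{\tau_\varepsilon<\tau_\delta\}\le(V(0,x_0)+K)/M(\varepsilon)$.
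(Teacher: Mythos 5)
The paper does not actually prove this theorem: it is recalled (with typographical slips, e.g.\ $U_r=\{|x|<0\}$ should read $\{|x|<r\}$) from Khasminskii's book and used later as a black box, so there is no in-paper proof to compare against. Your argument is the standard Khasminskii annulus argument and is essentially the right one: the two-sided localization between $\tau_\varepsilon$ and $\tau_\delta$, the extraction of the quantitative constant $c_\varepsilon=-\sup_{\varepsilon<|x|<r}LV>0$ fed into Dynkin's formula to force $\mathbb{E}[\tau]<\infty$, and the blow-up of $\inf_{t>0}V(t,x)$ as $x\to 0$ to drive $P\{\tau_\varepsilon<\tau_\delta\}\to 0$, followed by the observation that $\{\tau_\delta=\infty\}\subset\{\tau_\varepsilon<\tau_\delta\}$ up to the null set $\{\tau=\infty\}$, are exactly the ingredients of the original proof, and the final passage from $P\{\sup_{t}|x_t|\ge\delta\}=1$ to the negation of stochastic stability is handled correctly.

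One point deserves flagging. You twice invoke a lower bound on $V$ that is uniform in $t$: once on the closed annulus, to convert the Dynkin inequality into $\mathbb{E}[\tau]<\infty$, and once on the outer sphere, to produce the constant $K$, the latter justified ``by continuity of $V$ on the compact set $\{|x|=\delta\}$''. The relevant set is $[0,\infty)\times\{|x|=\delta\}$, which is not compact in the time variable, so neither bound follows from the hypotheses as literally stated; you need the additional (standard, usually implicit) assumption $\inf_{t\ge 0,\ \varepsilon\le|x|\le\delta}V(t,x)>-\infty$. This is harmless in the paper's application, where $V(x)=-\log|x|$ is time-independent and the bound is automatic on any annulus, but it should be stated as a hypothesis or derived from one. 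With that caveat added, the proof is complete.
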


\begin{example}
Let us consider the scalar linear stochastic differential equation
\begin{equation}
dx_t=ax_t dt + bx_tdW_t, \quad x_{t_0}=c,
\end{equation}
where $a,b,c \in \R$ and $W_t$ is a 1-dimentional Brownian motion.\\

The zero solution is an equilibrium to this equation where the Lyapunov function is given by $V(x)= \vert x\vert^r$ such that $r> 0,$ and its derivative along the process $x_t$ give us  
$$ L \vert x\vert^r =[a+\frac{1}{2}b^2(r-1)]r\vert x\vert^r.$$
As long as $a< \frac{b^2}{2},$ we can choose $r$ such that $0<r<1-2\frac{a}{b^2}$ and hence satisfy the condition 
$$LV\leq -kV, \text{ where }  k>0$$
which, according to theorem \eqref{stability} is sufficient for stochastic asymptotic stability.\\

If $a\geq \frac{b^2}{2}>0,$ we choose $V(x)=-\log\vert x\vert$ and we obtain
$$LV=-a+ \frac{b^2}{2}\leq 0,$$
for $b\neq 0;$ we have the stochastic instability of the zero solution from the theorem \eqref{instability}, for $b=0$ the system is deterministic, where we have the asymptotic stability for $a<0,$ simple stability for $a=0,$ and instability for $a>0.$
\end{example}

The Stratonovich integral lacks an important property of the It\^o integral to generalize the deterministic Lyapunov theorem to the stochastic case, which does not semimartingale, because of that we will study the stability of the solution of \eqref{SE} by studying its modified It\^o equation \eqref{ito}.

\subsection{Persistence of stability - the It\^o case}

We now look from a perturbative point of view the question of the persistence of the stability of an equilibrium point, i.e. we assume that the deterministic system possesses a stable equilibrium point and we look if for a sufficiently small external perturbation, meaning that all the components of $\sigma$ are sufficiently small, this property persists. 

\begin{theorem}[Persistence of stability- It\^o case] 
\label{persistencestabilityito}
Let us assume that $0$ is a stable (resp. asymptotically stable) equilibrium point for a deterministic system such that there exists a Lyapunov function $V(t,x)$ in a neighborhood $U$ of the set $x=0$ such that the Hessian matrix of $V$ is negative definite at $0$. Assume moreover, that the external perturbation of this system possesses $0$ as an equilibrium point. Then for $\sigma$ sufficiently small, the stochastic perturbation preserves the nature of the equilibrium point.
\end{theorem}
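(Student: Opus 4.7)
The plan is to reuse the deterministic Lyapunov function $V(t,x)$ and verify the hypotheses of Theorem~\ref{stability} for the perturbed It\^o system. The central observation is that the It\^o generator $L$ applied to $V$ decomposes as
\begin{equation*}
LV(t,x) \;=\; \dot V_{\mathrm{det}}(t,x) \;+\; \tfrac{1}{2}\,\operatorname{tr}\!\bigl(\sigma(t,x)\sigma(t,x)^{\mathsf T}\,\nabla_x^2 V(t,x)\bigr),
\end{equation*}
where $\dot V_{\mathrm{det}} = \partial_t V + f\cdot \nabla_x V$ is exactly the derivative of $V$ along the unperturbed deterministic flow. By the deterministic Lyapunov hypothesis, the first summand is $\leq 0$ (respectively strictly negative away from $x=0$) on a neighborhood $U$ of the origin.

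To handle the second summand I would use the assumed negative definiteness of $\nabla_x^2 V(t,0)$ together with continuity of the Hessian to produce a possibly smaller neighborhood $U_0 \subseteq U$ on which $\nabla_x^2 V(t,x)$ remains negative semidefinite, uniformly in $t$. Since $\sigma\sigma^{\mathsf T}$ is positive semidefinite and the trace of the product of a positive semidefinite and a negative semidefinite symmetric matrix is nonpositive, one deduces $\tfrac{1}{2}\operatorname{tr}(\sigma\sigma^{\mathsf T}\nabla_x^2 V) \leq 0$ throughout $U_0$. Adding the two estimates yields $LV \leq 0$ on $U_0$ (respectively $LV < 0$ on $U_0\setminus\{0\}$), which are precisely the hypotheses of the stochastic Lyapunov criterion in Theorem~\ref{stability}. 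This delivers stochastic stability, respectively stochastic asymptotic stability, of the equilibrium $0$. The smallness condition on $\sigma$ is then invoked to guarantee that $\sigma(t,0)=0$ so that $0$ persists as an equilibrium of the perturbed system, and that trajectories can be confined to $U_0$ with controllable probability, so that the pointwise Lyapunov estimate translates into the required probabilistic conclusion.

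The main technical obstacle I anticipate is the uniformity in $t$ of the neighborhood $U_0$ on which the Hessian sign is controlled: negative definiteness at the single point $x=0$ for every $t$ is strictly weaker than a uniform spectral gap on a fixed neighborhood. In the autonomous case, or whenever $V$ is quadratic in $x$ to leading order at the origin, this is immediate; in general one may have to strengthen the hypothesis slightly, for instance by assuming that the largest eigenvalue of $\nabla_x^2 V(t,x)$ is bounded above by a strictly negative constant uniformly in $t$ on $U_0$. Under such a bound the extra diffusion contribution is $O(\|\sigma\|^2)$ on $U_0$ and can be absorbed into the strictly negative part of $\dot V_{\mathrm{det}}$ in the asymptotic stability case, closing the argument cleanly.
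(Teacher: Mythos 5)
Your proposal is correct and follows essentially the same route as the paper: apply the It\^o generator to the deterministic Lyapunov function, observe that the first-order part is the deterministic Lyapunov derivative and hence nonpositive, control the sign of the diffusion term via the negative definiteness of the Hessian near the origin, and conclude by Theorem~\ref{stability}. Your version is in fact slightly more careful than the paper's: the trace-of-product argument correctly shows the second-order term is nonpositive for \emph{any} $\sigma$ once the Hessian is negative semidefinite on a neighborhood (the paper attributes this sign to $\sigma$ being small, which is not where smallness actually enters), and your remark about the uniformity in $t$ of that neighborhood identifies a genuine gap that the paper's one-line proof glosses over.
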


\begin{proof}
Using the assumptions on the Hessian matrix, one concludes that $0$ is a maximum for $V$ in a sufficiently small neighborhood of $0$, that is the second order term 
$$\frac{1}{2}\sum_{i,j=1}^n  \frac{\partial^2 V}{\partial x_i \partial x_j}\sum^{l}_{k=1}\sigma_{i,k}(t,x)\sigma_{j,k}(t,x),$$
is negative for $\sigma$ sufficiently small. As a consequence, if $0$ is a stable equilibrium point for the deterministic system then $0$ is again stable in the stochastic case. The same argument applies to the asymptotically stable case.
\end{proof}

\subsection{Persistence of stability - the RODEs case}

In the internal case, we will assume that the RODE system associated to the given deterministic equation is of the form
\begin{equation}
\label{fluctuat}
\left\lbrace \begin{array}{l}
\frac{dx}{dt}=f(x,\eta_t(\omega)) ,\\
d\eta_t= \sigma(t,\eta_t)dW_t ,
\end{array}
\right.
\end{equation}
meaning that $\eta_t$ is a fluctuation with respect to the case of constant parameters.\\

In this case, we have the following result:

\begin{theorem}[Persistence of stability- RODE case] 
Let us assume that $0$ is a stable (resp. asymptotically stable) equilibrium point for a deterministic system such that there exists a Lyapunov function $V(t,x;p)$ in a neighborhood $U$ of the set $x=0$ and for all $p\in \R^m$. Assume moreover, that the internal perturbation of this system possesses $0$ as an equilibrium point. If the Hessian of $V$ with respect to $p$ is negative definite then for $\sigma$ sufficiently small, the stochastic perturbation preserves the nature of the equilibrium point. Moreover, if the Lyapunov function does not depend on $p$ then no conditions on the Hessian of $V$ are needed.
\end{theorem}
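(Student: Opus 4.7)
The plan is to apply the stochastic Lyapunov criterion (Theorem~\ref{stability}) to the joint $(x_t,\eta_t)$-process viewed as an It\^o diffusion on $\R^n\times\R^m$. I would first rewrite the RODE system \eqref{fluctuat} in joint form,
$$
d\begin{pmatrix}x_t\\ \eta_t\end{pmatrix}=\begin{pmatrix}f(x_t,\eta_t)\\ 0\end{pmatrix}dt+\begin{pmatrix}0\\ \sigma(t,\eta_t)\end{pmatrix}dW_t,
$$
so that the diffusion acts only on the $\eta$-block. I would then take $\widetilde V(t,x,p):=V(t,x;p)$ as Lyapunov candidate. Because the $x$-coordinate is of bounded variation, It\^o's formula produces a second-order correction only from the $\eta$-diffusion, giving
$$
L\widetilde V(t,x,p)=\frac{\partial V}{\partial t}+\nabla_x V\cdot f(x,p)+\tfrac{1}{2}\,{\rm Tr}\!\left(\sigma(t,p)\sigma(t,p)^{\!\top}\,\nabla_p^2 V(t,x;p)\right).
$$

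Next, I would identify the first two terms with the deterministic Lyapunov derivative $\dot V$ of the parameter-frozen equation $dx/dt=f(x,p)$; by hypothesis this is $\le 0$ (resp.\ $<0$) on $U$ uniformly in $p$. The only new object is the It\^o correction $\tfrac12\,{\rm Tr}(\sigma\sigma^{\!\top}\nabla_p^2 V)$. When $\nabla_p^2 V$ is negative definite, its trace against the positive semidefinite matrix $\sigma\sigma^{\!\top}$ is $\le 0$, so $L\widetilde V\le 0$ already in the stable case; for asymptotic stability, taking $\sigma$ small ensures that the strictly negative deterministic part is not spoiled by the correction near $x=0$. When $V$ does not depend on $p$, both $\nabla_p V$ and $\nabla_p^2 V$ vanish identically, the It\^o correction disappears, and $L\widetilde V=\dot V$ with no further condition on $\sigma$.

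Finally, I would verify the remaining hypotheses of Theorem~\ref{stability}: $\widetilde V(t,0,p)=V(t,0;p)=0$ for every $p$, and $\widetilde V$ is positive in $x$ in a neighbourhood of $\{x=0\}\times\R^m$, properties inherited directly from the parameter-family Lyapunov function. The first part of Theorem~\ref{stability} then yields stability in probability; in the asymptotic case, the arbitrarily-small-upper-bound condition transfers from the deterministic $V$ and, combined with the strict negativity of $L\widetilde V$ just established, delivers stochastic asymptotic stability.

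The main obstacle I expect is conceptual rather than computational: reconciling the requirement ``$\sigma$ sufficiently small'' with the fact that negative definiteness of $\nabla_p^2 V$ alone already forces the It\^o correction to have the favourable sign, regardless of the amplitude of $\sigma$. Smallness is genuinely needed only to maintain the \emph{strict} inequality required in the asymptotic case when $\nabla_p^2 V$ may degenerate on part of $U$. A secondary delicate point is that $\widetilde V$ is positive definite with respect to the $x$-slice only, while the joint state lives on $\R^n\times\R^m$; this is resolved by formulating Theorem~\ref{stability} on the cross-product neighbourhood $U\times\R^m$, a standard adaptation of the proof of Khasminskii.
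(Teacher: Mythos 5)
Your proof is correct and follows essentially the same route as the paper: apply It\^o's formula to $V(t,X_t,\eta_t)$, observe that the deterministic $x$-equation contributes no second-order term so the only correction is $\tfrac{1}{2}\operatorname{Tr}\bigl(\sigma\sigma^{\top}\nabla_p^2V\bigr)$, and conclude from the sign of the Hessian via the stochastic Lyapunov theorem. Your side remark that negative definiteness of $\nabla_p^2V$ already forces this correction to be non-positive for \emph{every} $\sigma$ --- so that the ``sufficiently small'' clause is not needed for the sign --- is accurate and in fact sharper than the paper's own wording, which asserts $\sigma^{\top}\mathrm{Hess}_p(V)\,\sigma<0$ only ``for $\sigma$ sufficiently small''; you also explicitly handle the point, silently skipped in the paper, that $V$ is positive definite only in the $x$-slice of the joint state space $\R^n\times\R^m$.
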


\begin{proof}
This is a simple computation. Let $V(t,x;p)$ be a Lyapunov function for the deterministic system, meaning that 
\begin{equation}
\di\frac{\partial V}{\partial t} +\nabla V . f \leq 0 ,
\end{equation}
for all $x$ in $U\setminus \{0\}$ and all $p\in \R^m$. Then, we have using the It\^o formula
\begin{equation}
dV(t;X_t ,\eta_t ) = \left [ \di\frac{\partial V}{\partial t} +\nabla V .f \right ] dt+\di\frac{1}{2} dp_t^{T} Hess_p (V) dp_t ,
\end{equation}
where $Hess_p (V)$ denotes the Hessian matrix of $V$ with respect to $p$. It must be note that due to the deterministic character of the equation on $X_t$, we have no second order term due to $X_t$. \\

We already know that the first order term is negative for $x$ in $U\setminus \{0\}$ and all values of $p\in \R^m$. Assuming that the Hessian of V is negative definite over $U\setminus\{ 0\}$ and all values of $p$, we deduce that for $\sigma$ sufficiently small, we have 
\begin{equation}
\sigma^T Hess_p (V) \sigma <0 ,
\end{equation}
which concludes the proof.
\end{proof}

\subsection{A stochastic Larmor equation: first integral and stability}

We return to the study of the stochastic Larmor equation studied in Paragraph \ref{larmor1} and \ref{larmor2}. Let $b_0 \in \R^3$ be fixed such that L$b_0 \not= 0$. We consider the RODE Larmor equation defined by 
\begin{equation}
\di\frac{d\mu}{dt} = \mu \wedge \eta_t ,
\end{equation}
where $\eta_t$ is a $\langle b_0 \rangle$-valued stochastic process. By Lemma \ref{classlarmor}, we know that the sphere $S^2$ is invariant under the flow of this equation and moreover that the flow restricted to the sphere  possesses only as equilibrium point $\pm b_0 /\parallel b_0 \parallel$. \\

In the deterministic case, we have:

\begin{lemma}
The function $V(\mu )=\mu . b_0$, $\mu \in \R^3$, is a first integral of the Larmor equation $\di\frac{d\mu}{dt} =\mu \wedge b_0$.
\end{lemma}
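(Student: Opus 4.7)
The plan is to verify directly that $V$ is constant along every trajectory of the deterministic Larmor equation by differentiating $V(\mu_t)$ with respect to $t$ and showing the result vanishes identically. First I would apply the chain rule: for any smooth solution $\mu_t$, since $V(\mu) = \mu \cdot b_0$ is linear with gradient $\nabla V(\mu) = b_0$, substituting the Larmor equation yields
\begin{equation}
\frac{d}{dt} V(\mu_t) = b_0 \cdot \frac{d\mu_t}{dt} = b_0 \cdot (\mu_t \wedge b_0).
\end{equation}

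Next I would invoke the elementary property of the cross product that $u \wedge v$ is orthogonal to $v$; equivalently, the scalar triple product $b_0 \cdot (\mu_t \wedge b_0)$ vanishes since two of its arguments coincide. Hence $\frac{d}{dt} V(\mu_t) = 0$ for every $t$, so $V(\mu_t) = V(\mu_0)$ along every trajectory, which is exactly the statement that $V$ is a first integral. There is no real obstacle: the argument reduces to a one-line verification using only the definition of the cross product, and the only minor point to state explicitly is that the gradient of the linear functional $\mu \mapsto \mu \cdot b_0$ is $b_0$ itself.
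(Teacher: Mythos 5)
Your proposal is correct and is essentially identical to the paper's own proof: both differentiate $V(\mu_t)$ along a trajectory, substitute the Larmor equation, and conclude via the vanishing of the scalar triple product $b_0\cdot(\mu_t\wedge b_0)$. No further comment is needed.
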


\begin{proof}
We have 
\begin{equation}
\di\frac{d}{dt} \left [ V(\mu_t ) \right ] = \di\frac{d\mu }{dt} .b_0 =\left [\mu \wedge b_0 \right ] . b_0 =0,
\end{equation}
which concludes the proof.
\end{proof}

A natural question is to look for conditions under which such a first integral is preserved under a RODE  stochastisation process. We have:

\begin{lemma}
The first integral $V(\mu )=\mu .b_0$ is preserved by any $\eta_t$-RODE stochastisation where $\eta_t$ is a $\langle b_0 \rangle$ valued stochastic process.
\end{lemma}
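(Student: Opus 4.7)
The plan is to exploit the central feature of the RODE framework: for each fixed sample path $\omega \in \Omega$, the equation on $\mu$ is a genuine (classical) ordinary differential equation with time-dependent coefficients, so one may apply ordinary differential calculus pathwise. In particular, there is no It\^o correction term to worry about, since the noise enters only as a parameter in the drift and not as a separate diffusion term acting on $\mu$.

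First I would fix $\omega \in \Omega$ and differentiate $V(\mu_t) = \mu_t \cdot b_0$ in the usual sense along a solution of $\tfrac{d\mu}{dt} = \mu \wedge \eta_t(\omega)$, obtaining
\begin{equation}
\frac{d}{dt}\bigl[V(\mu_t)\bigr] \;=\; \frac{d\mu_t}{dt}\cdot b_0 \;=\; \bigl(\mu_t \wedge \eta_t(\omega)\bigr)\cdot b_0.
\end{equation}
Next I would use the hypothesis that $\eta_t$ is $\langle b_0 \rangle$-valued: for each $t$ and $\omega$ there exists a scalar $\lambda_t(\omega)$ with $\eta_t(\omega) = \lambda_t(\omega)\, b_0$. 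Substituting gives
\begin{equation}
\bigl(\mu_t \wedge \eta_t(\omega)\bigr)\cdot b_0 \;=\; \lambda_t(\omega)\, (\mu_t \wedge b_0)\cdot b_0 \;=\; 0,
\end{equation}
where the last equality is the standard vanishing of the scalar triple product whenever two of its entries coincide.

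Thus $\tfrac{d}{dt} V(\mu_t) = 0$ along every sample path, so $V(\mu_t) = V(\mu_0)$ almost surely, which is the claimed conservation. There is essentially no obstacle in this proof: the content is entirely in recognising that the $\langle b_0 \rangle$-valued constraint on $\eta_t$ reduces $\mu \wedge \eta_t$ to a scalar multiple of $\mu \wedge b_0$, which is already orthogonal to $b_0$. The contrast with the external Stratonovich stochastic Larmor equation of Section \ref{larmor} is worth underlining: there, the presence of the noise increment $\sigma(t,\mu)\,d_\circ W_t$ as an independent direction (not constrained to lie in $\langle b_0 \rangle$) was precisely what destroyed the conservation of $V$, whereas here the RODE stochastisation constrains the random field to remain parallel to $b_0$ and the first integral survives automatically.
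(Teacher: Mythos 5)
Your proof is correct and follows essentially the same route as the paper: differentiate $V(\mu_t)=\mu_t\cdot b_0$ pathwise along the RODE and observe that $[\mu_t\wedge\eta_t]\cdot b_0=0$ because $\eta_t$ is $\langle b_0\rangle$-valued. The only difference is that you make the scalar $\lambda_t(\omega)$ and the triple-product argument explicit, which the paper leaves implicit.
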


\begin{proof}
As $\mu_t$ satisfies a RODE equation, we have by differentiating with respect to $t$ the function $V(\mu_t )$ that 
\begin{equation}
d\left [ V(\mu_t ) \right ] = \di\frac{d\mu}{dt} .b_0 =\left [ \mu_t \wedge \eta_t \right ] .b_0 .
\end{equation}
As $\eta_t$ is a $\langle b_0 \rangle$-valued process, we have
\begin{equation}
\left [ \mu_t \wedge \eta_t \right ] .b_0 =0 .
\end{equation}
This concludes the proof.
\end{proof}

As a consequence, from a qualitative view point, the $\eta_t$ RODE Larmor equation where $\eta_t$ is a $\langle b_0 \rangle$ valued process preserves the main properties of the initial system. The dynamics differs only at a quantitative level. As a consequence, the two equilibrium points are in this case Lyapunov stable:

\begin{lemma}
The two equilibrium points $\pm b_0 /\parallel b_0 \parallel$ of any $\eta_t$-RODE Larmor equation where $\eta_t$ is a $\langle b_0 \rangle$-valued stochastic process are stochastically stable.
\end{lemma}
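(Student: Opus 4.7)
The plan is to exploit the fact that both the sphere invariance and the first integral $V(\mu)=\mu\cdot b_0$ are preserved \emph{pathwise} by any $\langle b_0\rangle$-valued RODE Larmor equation. By Lemma \ref{invariancelarmor} we have $\|\mu_t\|=\|\mu_0\|=1$ for every $\omega$, and by the preceding lemma on conservation of $V$ we have $\mu_t\cdot b_0=\mu_0\cdot b_0$ for every $\omega$. Since both constraints hold sample-path by sample-path, the verification of stochastic stability reduces to a purely deterministic geometric estimate on $S^2$.

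Fix the equilibrium $\mu_\star = b_0/\|b_0\|$ (the argument for $-\mu_\star$ is identical). For any $\mu\in S^2$ decompose $\mu = \alpha\,\mu_\star + \mu^\perp$ with $\mu^\perp\perp b_0$, so that $\mu\cdot b_0 = \alpha\|b_0\|$ and $\alpha^2+\|\mu^\perp\|^2=1$. A direct computation gives
$$
\|\mu-\mu_\star\|^2 \;=\; (\alpha-1)^2 + \|\mu^\perp\|^2 \;=\; 2(1-\alpha) \;=\; 2\left(1-\frac{\mu\cdot b_0}{\|b_0\|}\right).
$$
The right-hand side depends only on the conserved quantity $V(\mu)/\|b_0\|$. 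Applying this identity along any trajectory and using that both $\|\mu_t\|=1$ and $V(\mu_t)=V(\mu_0)$ are preserved, I obtain the pathwise identity
$$
\|\mu_t-\mu_\star\| \;=\; \|\mu_0-\mu_\star\| \qquad \text{for all } t\ge 0,\ \omega\in\Omega.
$$
Geometrically, this just says that the trajectory is confined to the latitude circle through $\mu_0$, and every point of such a circle is at the same Euclidean distance from $\mu_\star$.

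Stability is then immediate. Given $\varepsilon,\delta>0$ and $t_0$, set $r:=\delta$; whenever the initial condition $\mu_0\in S^2$ satisfies $\|\mu_0-\mu_\star\|<r$, the pathwise identity above yields $\sup_{t\ge t_0}\|\mu_t-\mu_\star\|\le r<\delta$ for \emph{every} realization $\omega$, hence
$$
P\!\left\{\omega:\sup_{t\ge t_0}\|\mu_t-\mu_\star\|>\delta\right\} \;=\; 0 \;<\;\varepsilon,
$$
which is precisely the definition of stochastic stability of $\mu_\star$.

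Regarding difficulty: there is essentially no obstacle, because the RODE stochastisation together with the $\langle b_0\rangle$-valued hypothesis eliminates the noise from both conservation laws. The only genuinely nontrivial input is the geometric identity $\|\mu-\mu_\star\|^2 = 2(1-\mu\cdot b_0/\|b_0\|)$ on $S^2$, which transfers conservation of the linear first integral directly into conservation of the distance to the equilibrium. Any attempt to use a Lyapunov/generator approach à la Theorem \ref{stability} would be overkill here and would obscure the fact that the result is in fact a pathwise (almost sure) stability statement, stronger than stochastic stability in probability.
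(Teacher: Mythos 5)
Your proof is correct and follows essentially the same route as the paper: both arguments rest on the pathwise conservation of the first integral $V(\mu)=\mu\cdot b_0$ together with the invariance of $S^2$, whose level sets are concentric circles around $\pm b_0/\parallel b_0\parallel$. Your version is in fact more careful than the paper's one-line geometric claim, since the identity $\parallel\mu-\mu_\star\parallel^2=2\bigl(1-\mu\cdot b_0/\parallel b_0\parallel\bigr)$ makes explicit that the distance to the equilibrium is itself conserved, yielding the (stronger) almost-sure stability with $r=\delta$.
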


\begin{proof}
Let $U$ be an open neighborhood of $\pm b_0 /\parallel b_0 \parallel$ on the sphere $S^2$. This neighborhood is foliated by the level surfaces of the function $V(\mu )=\mu .b_0$ which gives concentric circles centered on $\pm b_0 /\parallel b_0\parallel$ on $S^2$. As a consequence, the equilibrium points $\pm b_0 /\parallel b_0\parallel$ are stochastically stable.
\end{proof}

The stability is here obtained from a geometrical argument without using a  Lyapunov type argument.

\section{Stochastic persistence problem - Symplectic and Poisson Hamiltonian structures}
\label{symppoisson}

In this Part, we study the behavior of symplectic and Poisson Hamiltonian systems under stochastic perturbations. These systems have very strong properties. In particular, they can be derived from a variational principle. The persistence of these properties for the stochastic system will play an important role when selecting a stochastic model for a given phenomenon. However, as for stochastic gradient systems, it is not clear how to define a satisfying stochastic analogue of Hamiltonian systems. Several notions already exist and follows different paths. One part take its source in the seminal work of J-M. Bismut in \cite{bismut} defining what he called {\bf random mechanics} and the other one in the work of E. Nelson \cite{nelson} on {\bf stochastic mechanics}. Although the two approaches are related, the resulting definitions are different. For a generalization of Bismut work we refer to \cite{cami} and to a developpment of Nelson's approach we refer to \cite{cresson} and \cite{zambrini}. In this Part, we use the work of C\'ami and Ortega \cite{cami} for what concern the Stratonovich perturbation framework. However, and contrary to \cite{cami}, we discuss alternative definitions in the It\^o case and the RODE case. 

\subsection{Symplectic and Poisson Hamiltonian systems}

\subsubsection{Symplectic structures and Hamiltonian systems}
\label{symplectichamiltonian}

We review classical result on Hamiltonian systems on linear symplectic spaces following the book of J.E. Marsden and T.S. Ratiu (\cite{ratiu},Chap.2). All these part can be extended to manifolds (see \cite{ratiu},Chap.5) but we do not need such a generality in the following. 

\paragraph{Symplectic forms and symplectic spaces}

In the following, $Z$ denote an even dimensional vector space.

\begin{definition}
A symplectic form $\Omega$ on a vector space $Z$ is a non-degenerate skew-symmetric bilinear form on $Z$. The pair $(Z,\Omega )$ is called a symplectic vector space. 
\end{definition}

We often deal with specific symplectic forms called {\bf canonical forms}. Let $W$ be a real vector space of dimension $n$ and let $Z =W\times W^*$. We define the {\bf canonical symplectic form $\Omega$} on $Z$ by 
\begin{equation}
\Omega \left ( (w_1 ,\alpha_1 ) ,(w_2 ,\alpha_2 ) \right ) =\alpha_2 (w_1 ) -\alpha_1 (w_2 ) ,
\end{equation}
where $w_1 ,w_2 \in W$ and $\alpha_1 ,\alpha_2 \in W^*$. \\

Let $\{ e_i \}$ be a basis of $W$ and let $\{ e^i \}$ be the dual basis of $WW^*$. Then the matrix of $\Omega$ in the basis
\begin{equation}
\{ (e_1 ,0) ,\dots, (e_n ,0 ), (0,e^1 ),\dots ,(0,e^n ) \}
\end{equation}
is
\begin{equation}
\mathbb{J} =
\left [ 
\begin{array}{cc}
0 & 1 \\
-1 & 0
\end{array}
\right ] 
,
\end{equation}
where $1$ and $0$ are the $n\times n$ identity and zero matrices.\\

The symplectic form $\Omega$ is then written as 
\begin{equation}
\Omega (z,w) =z \mathbb{J} w^T .
\end{equation}

\paragraph{Hamiltonian function and Hamilton's equations}

\begin{definition}
Let $(Z,\Omega )$ be a symplectic vector space and $H$ an Hamiltonian function. Hamilton's equation for $H$ is the system of differential equations defined by $X_H$, i.e. 
\begin{equation}
\dot{z} =X_H (z) .
\end{equation}
\end{definition}

The {\bf classical Hamilton equation} corresponds to Hamilton's equations in {\bf canonical coordinates} $(q,p) =(q^1 ,\dots ,q^n ,p_1 ,\dots ,p_n )$ with respect to which $\Omega$ has matrix $\mathbb{J}$. In this coordinate system, the Hamiltonian vector field $X_H$ takes the form
\begin{equation}
X_H =\mathbb{J} .\nabla H .
\end{equation}
We then recover the usual form 
\begin{equation}
\left \{ 
\begin{array}{lll}
\di\frac{dq^i}{dt} & = & \di\frac{\partial H}{\partial p_i} ,\\
\di\frac{dp_i}{dt} & = & - \di\frac{\partial H}{\partial q^i} .
\end{array}
\right .
\end{equation}

\subsubsection{Poisson structures and Hamiltonian systems}
\label{poissonhamiltonian}

The previous Section can be generalized on Poisson manifold, which as explained in (\cite{ratiu},Chap.10,p.329), "keep just enough of the properties of Poisson brackets to describe Hamiltonian systems".

\paragraph{Poisson manifolds and Poisson brackets}

We remind some classical properties of Poisson manifolds and Poisson brackets following the book of J.E. Marsden and T. Ratiu \cite{ratiu} to which we refer for more details. 

\begin{definition}
A Poisson bracket (or Poisson structure) on a manifold $P$ is a bilinear operation $\{ .,.\}$ on $\mathcal{F}(P) =C^{\infty} (P)$ such that 
\begin{itemize}
\item $(\mathcal{F}(P) ,\{ . ,.\} )$ is a Lie algebra and $\{ . , .\}$ is a derivation. 
\item $\{ . ,.\}$ is a derivation in each factor, that is
\begin{equation}
\{ FG,H\}=\{F,H\}G +F\{ G,H\} ,
\end{equation}
for all $F,G$, and $H\in \mathcal{F}(P)$.
\end{itemize}
A manifold $P$ endowed with a Poisson bracket on $\mathcal{F}(P)$ is called a Poisson manifold.
\end{definition}

A Poisson manifold is denoted by $(P,\{ .,.\})$.\\
 
Of course, {\bf any symplectic manifold is a Poisson manifold} as noted in (\cite{ratiu},Chap.10,p.330).\\

An important example is given by the {\bf Lie-Poisson bracket}. If $\mathfrak{g}$ is a Lie algebra, then its dual $\mathfrak{g}^*$ is a Poisson manifold with respect to each of the {\it Lie-Poisson brackets} $\{ .,.\}_+$ and $\{ .,.\}_-$ defined by 
\begin{equation}
\{ F,G\}_{\pm} (\mu ) =\pm \langle \mu ,\left [ \di\frac{\delta F}{\delta \mu} ,\di\frac{\delta G}{\delta \mu} \right ] \rangle ,
\end{equation}
for $\mu \in \mathfrak{g}^*$ and $F,G \in \mathcal{F}(\mathfrak{g}^* )$.

\paragraph{Hamiltonian vector fields and Casimir functions}

The notion of Hamiltonian vector field can be extended from the symplectic to the Poisson context. This follows from the following Proposition (see \cite{ratiu}, Chap.10,Proposition 10.2.1 p.335):

\begin{proposition}
Let $P$ be a Poisson manifold. If $H \in \mathcal{F}(P)$, then there is a unique vector field $X_H$ on $P$ such that 
\begin{equation}
X_H [G] =\{ G,H \} ,
\end{equation}
for all $G \in \mathcal{F}(P)$. We call $X_H$ the {\bf Hamiltonian vector field} of $H$.
\end{proposition}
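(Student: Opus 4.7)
The plan is to exhibit $X_H$ by reading off the derivation that the Poisson bracket automatically produces, and then appeal to the standard correspondence between derivations of $\mathcal{F}(P)=C^{\infty}(P)$ and smooth vector fields on $P$.

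First, I fix $H\in\mathcal{F}(P)$ and define the operator $D_H:\mathcal{F}(P)\to\mathcal{F}(P)$ by $D_H(G)=\{G,H\}$. The defining axioms of a Poisson bracket give me everything I need about $D_H$ essentially for free. Bilinearity of $\{\cdot,\cdot\}$ implies that $D_H$ is $\mathbb{R}$-linear in $G$. The Leibniz property of $\{\cdot,\cdot\}$ in its first slot, namely $\{FG,H\}=\{F,H\}G+F\{G,H\}$, says exactly that $D_H(FG)=D_H(F)\,G+F\,D_H(G)$. Hence $D_H$ is a derivation of the commutative algebra $\mathcal{F}(P)$.

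Second, I invoke the standard identification from differential geometry: on a smooth manifold $P$, the $\mathbb{R}$-linear derivations of $C^{\infty}(P)$ are in bijective correspondence with smooth vector fields, the bijection sending a vector field $X$ to the derivation $G\mapsto X[G]$. Applied to $D_H$, this produces a unique smooth vector field, which I denote $X_H$, satisfying $X_H[G]=D_H(G)=\{G,H\}$ for every $G\in\mathcal{F}(P)$. Uniqueness is immediate from the bijectivity: if $Y$ is another vector field with $Y[G]=\{G,H\}$ for all $G$, then $Y$ and $X_H$ induce the same derivation and thus coincide as vector fields (equivalently, $(Y-X_H)[G]=0$ for all $G\in\mathcal{F}(P)$ forces $Y-X_H=0$ pointwise, since tangent vectors at each point are determined by their action on smooth functions).

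There is no real obstacle here; the proposition is essentially a repackaging of the derivation axiom of $\{\cdot,\cdot\}$ together with a basic fact about smooth manifolds. The only point worth being careful about is that one really does need the second axiom in the definition of a Poisson manifold (the Leibniz rule) and not just antisymmetry and the Jacobi identity, because it is precisely the Leibniz rule that makes $\{\cdot,H\}$ a derivation rather than merely an $\mathbb{R}$-linear map. The Jacobi identity, although not used in this existence and uniqueness statement, is what will subsequently guarantee that the assignment $H\mapsto X_H$ is a Lie algebra antihomomorphism, but that lies beyond the present statement.
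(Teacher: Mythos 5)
Your proof is correct: the Leibniz property of the bracket makes $G\mapsto\{G,H\}$ a derivation of $C^{\infty}(P)$, and the standard bijection between derivations and smooth vector fields gives existence and uniqueness of $X_H$. The paper itself states this proposition without proof, citing Marsden--Ratiu (Proposition 10.2.1), and your argument is exactly the standard one given there, so there is nothing to reconcile.
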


This definition reduces to the symplectic one if the Poisson manifold $P$ is symplectic.\\

A classical differential equation $\dot{z} =f(z)$ will be called an {\bf Hamilton equation} if we can find a Hamiltonian function $H$ such that 
\begin{equation}
\label{hamiltonpoisson}
\dot{z} =X_H (z) .
\end{equation}
This equation can be written in Poisson bracket form as
\begin{equation}
\dot{F} =\{ F,H \} ,
\end{equation}
for any $F\in \mathcal{F}(P)$.\\

The classical transfer between the Poisson structure on $\mathcal{F}(P)$ and the Lie structure on Hamiltonian systems is also preserved (see \cite{ratiu},Proposition 10.2.2 p.335):

\begin{proposition}
The map $H\rightarrow X_H$ of $\mathcal{F} (P)$ to $\mathcal{H}(P)$ is a Lie algebra antihomomorphism; that is 
\begin{equation}
\left [ X_H ,X_K \right ] = -X_{{ H,K}} .
\end{equation}
\end{proposition}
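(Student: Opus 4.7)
The plan is to verify the identity by applying both sides to an arbitrary test function $F \in \mathcal{F}(P)$, since two vector fields on a manifold coincide iff they act identically as derivations on $\mathcal{F}(P)$. The key ingredients are the defining property $X_H[G] = \{G, H\}$ and the Jacobi identity of the Poisson bracket, which is part of the Lie algebra axiom assumed on $(\mathcal{F}(P), \{\cdot,\cdot\})$.

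First I would expand the Lie bracket of vector fields acting on $F$:
\begin{equation}
[X_H, X_K][F] = X_H\bigl[X_K[F]\bigr] - X_K\bigl[X_H[F]\bigr] = X_H\bigl[\{F,K\}\bigr] - X_K\bigl[\{F,H\}\bigr] = \{\{F,K\},H\} - \{\{F,H\},K\}.
\end{equation}
Next I would apply the skew-symmetry of $\{\cdot,\cdot\}$ to rewrite the second term as $\{K,\{F,H\}\}$ and invoke the Jacobi identity on $F, H, K$, which rearranged reads
\begin{equation}
\{\{F,K\},H\} + \{\{K,H\},F\} + \{\{H,F\},K\} = 0.
\end{equation}
Solving for the expression obtained above and using skew-symmetry once more gives $[X_H,X_K][F] = \{\{H,K\},F\} = -\{F,\{H,K\}\} = -X_{\{H,K\}}[F]$. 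Since $F$ was arbitrary, this yields the claimed identity $[X_H,X_K] = -X_{\{H,K\}}$.

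The remaining point is that $H \mapsto X_H$ is linear, which is immediate from the bilinearity of $\{\cdot,\cdot\}$ in its first argument and the uniqueness of the Hamiltonian vector field associated with a given function (guaranteed by the preceding proposition). There is no real obstacle here: the entire argument is algebraic once one treats vector fields as derivations on $\mathcal{F}(P)$, and the only nontrivial input is the Jacobi identity. The minus sign, which is what makes the map an \emph{anti}homomorphism rather than a homomorphism, arises precisely from the convention $X_H[G] = \{G,H\}$ (placing $H$ in the second slot of the bracket) combined with the skew-symmetry used to swap arguments in the final step.
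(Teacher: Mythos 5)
Your proof is correct. The paper itself gives no proof of this proposition, quoting it directly from Marsden--Ratiu (\cite{ratiu}, Proposition 10.2.2); your argument --- evaluating both sides on a test function $F$, using $X_H[G]=\{G,H\}$, and combining the Jacobi identity with skew-symmetry to produce the minus sign --- is precisely the standard derivation found there, so there is nothing to flag.
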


An interesting property is that first integrals of a Hamiltonian systems possess a simple algebraic characterization (see \cite{ratiu},Corollary 10.2.4 p.336):

\begin{corollary}
\label{poissonintegral}
Let $G,H \in \mathcal{F}(P)$. Then $G$ is constant along the integral curves of $X_H$ if and only if $\{ G,H\}=0$. 
\end{corollary}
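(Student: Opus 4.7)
The plan is to reduce the statement to the defining relation $X_H[G]=\{G,H\}$ from the preceding proposition, and then invoke the standard fact that a smooth function is constant along the integral curves of a vector field if and only if its Lie derivative along that field vanishes identically.

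More concretely, let $c:I\to P$ be an integral curve of $X_H$, so that $\dot c(t)=X_H(c(t))$ for all $t\in I$. The chain rule gives
\begin{equation}
\frac{d}{dt}\bigl(G\circ c\bigr)(t)=dG_{c(t)}\bigl(\dot c(t)\bigr)=X_H[G]\bigl(c(t)\bigr).
\end{equation}
Using the identity $X_H[G]=\{G,H\}$ supplied by the previous proposition, this becomes
\begin{equation}
\frac{d}{dt}\bigl(G\circ c\bigr)(t)=\{G,H\}\bigl(c(t)\bigr).
\end{equation}

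From this equality both implications follow at once. For the easy direction, if $\{G,H\}\equiv 0$ on $P$, then the derivative of $G\circ c$ vanishes on $I$ for every integral curve $c$, hence $G$ is constant along integral curves of $X_H$. For the converse, assume $G$ is constant along every integral curve of $X_H$; then for every point $z\in P$, taking the integral curve $c$ with $c(0)=z$ (which exists locally by the standard ODE theory applied to the smooth vector field $X_H$) and evaluating the above identity at $t=0$ yields $\{G,H\}(z)=0$. Since $z$ was arbitrary, $\{G,H\}\equiv 0$ on $P$.

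There is no real obstacle here: the only subtlety is justifying the existence of an integral curve through every point in the converse direction, which is immediate from smoothness of $X_H$ and the local existence theorem for ODEs on manifolds. This gives a very short proof, essentially two lines, once the identification $X_H[G]=\{G,H\}$ is in hand.
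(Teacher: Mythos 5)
Your proof is correct: the identity $\frac{d}{dt}(G\circ c)=X_H[G](c(t))=\{G,H\}(c(t))$ together with local existence of integral curves gives both directions cleanly. The paper states this corollary without proof, citing Marsden--Ratiu (Corollary 10.2.4), and your argument is exactly the standard one found there, so there is nothing to add.
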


Among the elements of $\mathcal{F}(P)$ are functions $C$ such that $\{ C,F\}=0$, for all $F\in \mathcal{F}(P)$, that is, $C$ is constant along the flow of all Hamiltonian vector fields or, equivalently, $X_C =0$, that is, $C$ generates trivial dynamics. Such functions are called {\bf Casimir functions} of the Poisson structure. They form the center of the Poisson algebra.

\paragraph{Poisson structure of the Larmor equation}
\label{poissonlarmor}

We first define a Poisson structure on $\R^3$ following (\cite{ratiu},p.331) where the {\bf Rigid Body bracket} is defined for all $z\in \R^3$ by
\begin{equation}
\label{bracketlarmor}
\{ F ,G \}_{\pm} (z)=\pm z.\left ( \nabla F \wedge \nabla G \right ) ,
\end{equation}
where $\nabla F$, the gradient of $F$, is evaluated at $z$.\\

Let us consider the Hamiltonian function defined by 
\begin{equation}
H(z)=z. b ,
\end{equation}
where $b\in \R^3$ is a given constant vector. 

\begin{lemma}
\label{hamlarmorpoisson}
The Hamiltonian vector field $X_H$ defined by $H(z) =z.b$ on the Poisson manifold $(\R^3 ,\{ .,.\}_{\pm} )$ is given for all $z\in \R^3$ by 
\begin{equation}
X_H (z)= -\pm z\wedge b .
\end{equation}
\end{lemma}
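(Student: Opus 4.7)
The plan is to apply the defining relation $X_H[G] = \{G,H\}$ of the Hamiltonian vector field (valid for every $G \in \mathcal{F}(\mathbb{R}^3)$) with the explicit Rigid Body bracket (\ref{bracketlarmor}), and read off $X_H$ from the result.

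First I would compute $\nabla H$. Since $H(z) = z\cdot b$ is linear in $z$ with $b$ constant, $\nabla H (z) = b$ for every $z\in\mathbb{R}^3$. Plugging into (\ref{bracketlarmor}) gives, for any smooth $G$,
\begin{equation}
\{G,H\}_{\pm}(z) \;=\; \pm\, z\cdot\bigl(\nabla G(z) \wedge b\bigr).
\end{equation}

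Next I would use the cyclic invariance of the scalar triple product $u\cdot(v\wedge w)=w\cdot(u\wedge v)=v\cdot(w\wedge u)$ to move $\nabla G$ into the outermost factor:
\begin{equation}
z\cdot\bigl(\nabla G \wedge b\bigr) \;=\; \nabla G \cdot (b\wedge z) \;=\; -\,\nabla G\cdot (z\wedge b).
\end{equation}
Combining the two displays,
\begin{equation}
\{G,H\}_{\pm}(z) \;=\; \nabla G(z)\cdot\bigl(\mp\, z\wedge b\bigr).
\end{equation}

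Finally, by the uniqueness of the Hamiltonian vector field (from the Proposition cited just before Corollary \ref{poissonintegral}) and since $X_H[G](z)=\nabla G(z)\cdot X_H(z)$ for any smooth $G$, the identity above forces
\begin{equation}
X_H(z) \;=\; \mp\, z\wedge b \;=\; -\!\pm z\wedge b,
\end{equation}
which is the stated formula. There is no real obstacle here: the only place one must be careful is keeping track of the $\pm$ sign coming from the two possible Rigid Body brackets and the minus sign produced by the cyclic rearrangement of the triple product; getting these to combine consistently is the one bookkeeping step worth double checking.
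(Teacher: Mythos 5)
Your proof is correct and follows essentially the same route as the paper: compute $\nabla H = b$, insert it into the Rigid Body bracket, rearrange the scalar triple product to isolate $\nabla G$, and identify $X_H$ from the defining relation $X_H[G] = \nabla G \cdot X_H$. The only cosmetic difference is that you invoke cyclic invariance of the triple product where the paper uses the antisymmetry $[u,v,w] = -[v,u,w]$; both yield the same sign and the same conclusion.
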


\begin{proof}
We have $\nabla H (z)=b$ for all $z\in \R^3$ and 
\begin{equation}
\{ F ,H\}_{\pm} (z)=\pm z. \left ( \nabla F \wedge b \right ) ,
\end{equation}
for all $z\in \R^3$ and $F\in \mathcal{F} (\R^3 , \{ .,.\})$. The mixed product $[u,v,w]=u.(v\wedge w)$ satisfies the relation 
$[u,v,w]=-[v,u,w]$ so that we obtain 
\begin{equation}
\label{equabracket}
\{ F ,H\}_{\pm} (z)=-\pm \nabla F (z). (z \wedge b) .
\end{equation}
By definition of $X_H$, one must have for all $F\in \mathcal{F}((\R^3 , \{ .,.\})$ the equality $X_H [F]=\{ F,H \}_{\pm}$. As $X_H [F] =X_H[z] .\nabla F (z)$, we deduce that 
\begin{equation}
X_H (z)= -\pm z\wedge b .
\end{equation}
This concludes the proof.
\end{proof}

As a consequence, the Hamiltonian equation associated to $H$ on the Poisson manifold $(\R^3 ,\{ .,.\}_{\pm}$ is given by 
\begin{equation}
\dot{z} =- \pm z\wedge b .
\end{equation}
The previous result gives a Poisson Hamiltonian structure to the Larmor equation studied in Section \ref{larmor}:

\begin{lemma}
\label{poissonlarmorlemma}
The Larmor equation (\ref{eqlarmor}) is a Hamiltonian systems with respect to the Poisson structure $(\R^3 ,\{ . ,.\}_{-} )$ with a Hamiltonian function given by $H(z)=z.b$.
\end{lemma}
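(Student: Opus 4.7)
The plan is to reduce the statement to a direct specialization of Lemma \ref{hamlarmorpoisson}, which already computes the Hamiltonian vector field associated to the linear function $H(z) = z \cdot b$ simultaneously for both Lie-Poisson brackets $\{\cdot,\cdot\}_{\pm}$ on $\R^3$. First I would select the minus bracket $\{\cdot,\cdot\}_-$; inserting the corresponding sign into the formula $X_H(z) = -\!\pm z\wedge b$ from that Lemma yields $X_H(z) = +\,z\wedge b$. Next I would write down the associated Hamilton equation on the Poisson manifold $(\R^3,\{\cdot,\cdot\}_-)$, namely $\dot{z} = X_H(z) = z \wedge b$. Identifying the state variable $z$ with the magnetic moment $\mu$, this is term for term the Larmor equation (\ref{eqlarmor}), which proves the claim.

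The only step that requires any care — and essentially the only one worth double-checking — is the sign bookkeeping: Lemma \ref{hamlarmorpoisson} combines the sign from the choice of Poisson bracket with an additional sign coming from the antisymmetry of the mixed product $[u,v,w]=-[v,u,w]$ used in its derivation, and these two signs cancel precisely for the minus bracket. This is what explains why the Larmor equation is Hamiltonian for $\{\cdot,\cdot\}_-$ rather than for $\{\cdot,\cdot\}_+$. Beyond this verification, no genuinely new computation is needed; in particular, all the algebraic structure (existence of $X_H$, its expression in terms of $\nabla H = b$, and the pairing with the rigid-body bracket (\ref{bracketlarmor})) is already packaged in the preceding lemma.
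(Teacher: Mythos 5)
Your proposal is correct and follows exactly the route the paper takes: the lemma is stated as an immediate consequence of Lemma \ref{hamlarmorpoisson}, and specializing its formula $X_H(z)=-\pm\, z\wedge b$ to the minus bracket gives $\dot z = z\wedge b$, which is the Larmor equation. Your sign bookkeeping (the bracket sign cancelling the sign from the antisymmetry of the mixed product) is the right point to emphasize and matches the paper's computation.
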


Using this structure, one can deduce an alternative proof of the invariance of $S^2$ for the Larmor equation. Indeed, we have the following general result:

\begin{lemma}
\label{casimir}
The function $F(z)=\di\frac{\parallel z \parallel^2}{2}$ is a Casimir function of the Poisson manifold $(\R^3 ,\{ .,.\}_{\pm} )$.
\end{lemma}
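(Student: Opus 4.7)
The plan is to verify directly that $F(z) = \|z\|^2/2$ lies in the center of the Poisson algebra by using the definition of the Rigid Body bracket given in equation (\ref{bracketlarmor}) and the standard identity for the mixed product of three vectors when two of them coincide.

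First, I would compute the gradient of $F$, which is simply $\nabla F(z) = z$ for every $z \in \mathbb{R}^3$. Then, for an arbitrary test function $G \in \mathcal{F}(\mathbb{R}^3)$, I would plug this into the bracket formula to obtain
\begin{equation}
\{F,G\}_{\pm}(z) \;=\; \pm\, z \cdot \bigl( \nabla F(z) \wedge \nabla G(z) \bigr) \;=\; \pm\, z \cdot \bigl( z \wedge \nabla G(z) \bigr).
\end{equation}

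The key observation is the well-known antisymmetry of the mixed product $[u,v,w] = u \cdot (v \wedge w)$, which forces $[u,u,w] = 0$ for any vectors $u,w \in \mathbb{R}^3$. Applying this with $u = z$ and $w = \nabla G(z)$, the right-hand side above vanishes identically. Since $G$ was arbitrary, this gives $\{F,G\}_{\pm} = 0$ for every $G \in \mathcal{F}(\mathbb{R}^3)$, which is exactly the defining property of a Casimir function. There is no real obstacle here: the whole proof is essentially a one-line application of the vanishing of a mixed product with a repeated entry, and it applies uniformly to both signs in $\{\cdot,\cdot\}_{\pm}$.
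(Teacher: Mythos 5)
Your proof is correct and follows essentially the same route as the paper: compute $\nabla F(z)=z$, substitute into the Rigid Body bracket, and observe that the mixed product $z\cdot(z\wedge\nabla G(z))$ vanishes because of the repeated entry. The paper merely phrases this via its previously derived identity (\ref{equabracket}) before concluding the same vanishing, so there is no substantive difference.
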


\begin{proof}
We have $\nabla F (z)=z$. From equation (\ref{equabracket}) we deduce that 
\begin{equation}
\{ F ,G\}_{\pm} (z)=-\pm z. \left ( z \wedge \nabla G (z) \right ) =0.
\end{equation}
This concludes the proof.
\end{proof}

Using Corollary \ref{poissonintegral}, we deduce that:

\begin{corollary}
The sphere $S^2$ is invariant under the flow of the Poisson Hamiltonian $X_H$.
\end{corollary}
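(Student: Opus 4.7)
The plan is to chain together Lemma \ref{casimir} and Corollary \ref{poissonintegral}, both already established in the excerpt, and observe that the level sets of the Casimir $F(z)=\|z\|^2/2$ include the sphere $S^2$.

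First I would invoke Lemma \ref{casimir}, which tells us that $F(z)=\|z\|^2/2$ is a Casimir function of the Poisson manifold $(\R^3, \{\cdot,\cdot\}_{\pm})$. By the definition of a Casimir function, $\{F,G\}_{\pm}=0$ for every $G\in\mathcal{F}(\R^3)$; in particular, taking $G=H$ where $H(z)=z\cdot b$ is the Larmor Hamiltonian from Lemma \ref{poissonlarmorlemma}, we obtain $\{F,H\}_{-}=0$.

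Next I would apply Corollary \ref{poissonintegral} with this choice of $F$ and $H$: since $\{F,H\}_{-}=0$, the function $F$ is constant along the integral curves of the Hamiltonian vector field $X_H$. Equivalently, for any solution $z(t)$ of $\dot z = X_H(z)$, the quantity $\|z(t)\|^2 = 2F(z(t))$ is preserved in time, so $\|z(t)\|=\|z(0)\|$ for all $t$.

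Finally, to conclude invariance of $S^2$, I would observe that $S^2=\{z\in\R^3 : F(z)=1/2\}$ is precisely a level set of $F$. Hence, for any initial condition $z(0)\in S^2$, the solution satisfies $F(z(t))=F(z(0))=1/2$ for all $t$, i.e. $z(t)\in S^2$. Since the argument reduces to citing two previously proved results and noting that $S^2$ is a level set of the Casimir, I do not expect any real obstacle; the whole point of introducing the Poisson framework was precisely to make the invariance of $S^2$ an immediate algebraic consequence rather than a direct geometric computation as in Lemma \ref{larmorinvariance}.
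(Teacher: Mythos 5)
Your proposal is correct and follows exactly the route the paper intends: the paper states this corollary as an immediate consequence of Lemma \ref{casimir} (the Casimir $F(z)=\|z\|^2/2$) combined with Corollary \ref{poissonintegral}, which is precisely your chain of reasoning. Your added observation that $S^2$ is the level set $F(z)=1/2$ just makes explicit the final step the paper leaves implicit.
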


\subsection{Stochastic symplectic and Poisson Hamiltonian systems}

\subsubsection{How to select a definition for stochastic Hamiltonian systems ?}

As already discussed in the Introduction of this paper, in many applications one has to choose a stochastic model respecting a certain number of constraints. In this Section, we face a problem of the same nature by searching for a definition of a stochastic object to which some specific features of classical Hamiltonian systems can be attach.\\

The reminder about Hamiltonian systems in Sections \ref{symplectichamiltonian} and \ref{poissonhamiltonian} offers many possibilities:

\begin{itemize}
\item {\bf Algebraic analogue}. We focus on a specific form of Hamiltonian equations in canonical coordinate systems and we just generalize this specific form by extending it directly to the stochastic case. The approach of J-M. Bismut in \cite{bismut} to define stochastic Hamiltonian systems in the Stratonovich setting is an example. We refer to Section \ref{bismutham2} for a discussion. In general, such a generalization is not satisfying as the specific shape of a class of equations is coordinates dependent. As a consequence, one usually look for a stochastic model which preserves a more intrinsic property of the initial system.

\item {\bf Qualitative analogue}. In this case, one select a class of stochastic models by imposing some well known properties like the preservation of the symplectic or volume form (Liouville's theorem) or the conservation of energy. This procedure is reminiscent of our approaches to stochastic gradient equations where the main property of these systems is to possess a global  Lyapunov functions inducing all the qualitative properties of the system. This approach has been used by Milstein in \cite{mil} in order to recover the Bismust stochastic Hamiltonian class as a consequence of the preservation of the symplectic form (see Section \ref{bismutham2}). Qualitative behavior are of course very important but in some cases the constraints imposed by preserving it in the stochastic setting are two strong. This is the case for example if one wants to preserves the conservation of energy (see \ref{bismutham2}).

\item {\bf Geometrical analogue}. Hamiltonian systems possess a rich geometrical structure related to symplectic and Poisson structure which allows to obtain intrinsic/geometric formulations of these equations. An idea is naturally to extend to the stochastic case this geometric framework, thus avoiding the problem of the {\it algebraic analogue} approach. However, preserving geometric properties often mean that one must deal with the Stratonovich approach. Indeed, most of the object and notions which are defined at the differential geometric level are dependent of the specific properties of the differential calculus, in particular the classical chain rule and Leibniz properties. These rules are only preserved in the Stratonovich case which then provide a convenient framework to extend the notions at the stochastic level. This strategy is for example followed by C\'ami and Ortega in \cite{cami}. 

\item {\bf Variational analogue}. One of the main properties of Hamiltonian equations is that they corresponds to critical points of a given functional known as the Hamilton's principle. As this principle does not depend on coordinate systems, we have a very deep characterization of these equations. An idea is then to generalize the functional in a stochastic setting and to develop an appropriate calculus of variation. Here again the Stratonovich is more convenient but the It\^o case has also received much attention due to Nelson's approach to stochastic mechanics (see \cite{nelson},\cite{zambrini},\cite{cresson}).
\end{itemize}

The next Sections discuss in the three frameworks, Stratonovich, It\^o and RODE, the definition of a stochastic analogue of Hamiltonian systems following the previous selection rules.

\subsubsection{Bismut stochastic Hamiltonian systems - Stratonovich case}
\label{bismutham2}

In the Strato\-novich framework, most of the classical differential relation between objects are preserved so that we can manage to define stochastic Hamiltonian in different ways.

\paragraph{Algebraic analogue}

In \cite{bismut}, J-M. Bismut defines stochastic Hamiltonian systems as follows:

\begin{definition}
A stochastic differential equation in $\R^{2n}$ is called stochastic Hamiltonian system if we can find a finite family of functions $\mathbf{H}=\{ H_r \}$, $H_r :\R^{2n} \rightarrow \R$ such that 
\begin{equation}
\left \{
\begin{array}{lll}
dq_i & = & \di\frac{\partial H_0}{\partial p_i} dt +\di\sum_{r=1}^m \di\frac{\partial H_r}{\partial p_i} \circ dB_t^r ,\\
dp_i & = & -\di\frac{\partial H_0}{\partial q_i} dt -\di\sum_{r=1}^m \di\frac{\partial H_r}{\partial q_i} \circ dB_t^r .
\end{array}
\right .
\end{equation}
\end{definition}

We recover the classical algebraic structure of Hamiltonian systems: Let $z=(q,p)\in \R^{2n}$, we have
\begin{equation}
\label{bismutham}
dz = \mathbb{J} \nabla H_0 dt +\di\sum_{j=1}^m \mathbb{J} \nabla H_i \circ dB_t^i .
\end{equation}
The previous property is not satisfying as this specific form depends on the coordinate system. However, Bismut (see \cite{bismut},Chap.V,p.222) proves that we preserve also the Liouville's property and the Hamilton's principle:
\begin{itemize}
\item {Liouville's property}. Let $dz=f(t,z)dt +\sigma (t,z) dB_t$ be of a stochastic differential equation on $\R^{2n}$. The phase flow preserves the symplectic structure if and only if it is a stochastic Hamiltonian system of the form (\ref{bismutham}).

\item {\it Hamilton's principle}. Solutions of a stochastic Hamiltonian system correspond to critical points of a stochastic functional defined by
\begin{equation}
\mathcal{L}_{\mathbf{H}} (X)=\di\int_0^t H_0 (s,X_s ) dt +\di\sum_{r=1}^m H_r (s,X_s ) \circ dB_s^r .
\end{equation}
\end{itemize}

As a consequence, not only the shape of the equation is preserved but intrinsic properties like satisfying an Hamilton's principle.

\begin{remark}
In (\cite{cami}, Section 2, Remark 2.5) the authors pointed out that the choice of the Stratonovich equation is related to fact that "underlying geometric features underlying classical deterministic Hamiltonian mechanics are preserved" in the Stochastic case. However, they say that in the It\^o case it is also possible as we have transfer formula between Statonovic and It\^o stochastic equation. 
\end{remark}

\paragraph{Qualitative analogue}

We have seen that preserving the shape of Hamiltonian systems lead to a notion of stochastic Hamiltonian systems which preserves also the symplectic form. A natural question is then: {\bf assume that the flow of a Stratonovich differential equation preserves the symplectic form. Can we deduce that it is a Bismut stochastic Hamiltonian system ?}\\

This problem was answered positively by G.N. Milstein and al. in \cite{mil} where it is proved the following 

\begin{theorem}
A $2n$ dimensional Stratonovich differential equation possesses a Bismut stochastic Hamiltonian formulation if and only if it preserves the symplectic form.
\end{theorem}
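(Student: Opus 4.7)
The plan is to establish both directions via the classical correspondence between symplectic vector fields and (locally) Hamiltonian ones, transported to the stochastic setting through the ordinary chain rule that Stratonovich calculus enjoys.

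First, I write an arbitrary Stratonovich SDE on $\R^{2n}$ in the form $dz = f_0(z)\,dt + \sum_{r=1}^m f_r(z) \circ dB_t^r$ with smooth vector fields $f_0,\ldots,f_m$. A Stratonovich SDE defines a stochastic flow of diffeomorphisms $\phi_t(\omega)$ sample path by sample path, so preserving the symplectic form means $\phi_t^*\Omega = \Omega$ almost surely for every $t$.

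For the ``only if'' direction, assume the equation is in Bismut form (\ref{bismutham}), so $f_r = \mathbb{J}\nabla H_r$ for $r=0,1,\ldots,m$. Applying the Stratonovich chain rule to the two-form $\phi_t^*\Omega$ gives
$$d(\phi_t^*\Omega) = \phi_t^*(\mathcal{L}_{f_0}\Omega)\,dt + \sum_{r=1}^m \phi_t^*(\mathcal{L}_{f_r}\Omega)\circ dB_t^r.$$
Since each $f_r$ is Hamiltonian, the classical identity $\mathcal{L}_{\mathbb{J}\nabla H}\Omega = 0$ makes every term vanish, so $\phi_t^*\Omega = \Omega$ for all $t$. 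For the ``if'' direction, start from $\phi_t^*\Omega = \Omega$ and use the same Stratonovich differential. One must then conclude that $\phi_t^*(\mathcal{L}_{f_r}\Omega) = 0$ for each $r=0,1,\ldots,m$ separately; evaluating at $t=0$ yields $\mathcal{L}_{f_r}\Omega = 0$, meaning every $f_r$ is a symplectic vector field. Combining Cartan's formula $\mathcal{L}_{f_r} = d\,i_{f_r} + i_{f_r} d$ with $d\Omega = 0$ shows that $i_{f_r}\Omega$ is closed on $\R^{2n}$, and the Poincaré lemma produces smooth functions $H_r$ with $i_{f_r}\Omega = -dH_r$, equivalently $f_r = \mathbb{J}\nabla H_r$. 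Substituting back recovers the Bismut form (\ref{bismutham}).

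The main obstacle is the separation step in the ``if'' direction, where one must disentangle the $dt$ contribution from each $\circ dB_t^r$ contribution in order to conclude that every $\mathcal{L}_{f_r}\Omega$ vanishes on its own. The cleanest way is to exploit the mutual independence of the Brownian motions $B^1,\ldots,B^m$ through the Stroock--Varadhan support theorem: every smooth path $(w^1(\cdot),\ldots,w^m(\cdot))$ lies in the support of $(B^1,\ldots,B^m)$, so the preservation of $\Omega$ by $\phi_t(\omega)$ almost surely entails preservation by the deterministic flow obtained by replacing the $\circ dB_t^r$ by $\dot w^r\,dt$ for arbitrary smooth $w^r$. Choosing $w^r$ to single out one coefficient at a time then forces each $f_r$ to be symplectic. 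An alternative, more computational route uses the Stratonovich--It\^o conversion formula to read off the coefficients of $dt$ and of each $\circ dB_t^r$ in the expansion of $d(\phi_t^*\Omega)$ using the independent quadratic covariations $\langle B^r,B^s\rangle_t = \delta_{rs}t$; this is the route implicit in \cite{mil}.
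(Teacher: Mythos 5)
The paper gives no proof of this statement: it is quoted from Milstein et al.\ \cite{mil} and used as a black box, so there is no internal argument to compare yours against. Judged on its own, your proof is correct in outline and is the intrinsic, coordinate-free version of the result. Two points deserve flagging. First, the identity $d(\phi_t^*\Omega)=\phi_t^*(\mathcal{L}_{f_0}\Omega)\,dt+\sum_r\phi_t^*(\mathcal{L}_{f_r}\Omega)\circ dB_t^r$ is the one genuinely stochastic ingredient; it is Kunita's transport formula for pullbacks of tensor fields under Stratonovich flows (also in Bismut \cite{bismut}) and should be cited or proved rather than asserted, since the whole argument rests on it. Second, for the separation step in the ``if'' direction, the It\^o-conversion route is the cleaner of your two options: writing $F_t:=\phi_t^*\Omega-\Omega\equiv 0$ in It\^o form, uniqueness of the semimartingale decomposition and independence of the $B^r$ force each martingale coefficient $\phi_t^*(\mathcal{L}_{f_r}\Omega)$ to vanish, whence $\mathcal{L}_{f_r}\Omega=0$ at $t=0$ for $r\geq 1$; the Wong--Zakai correction then drops out and the drift coefficient gives $\mathcal{L}_{f_0}\Omega=0$. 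The support-theorem route also works but requires a Wong--Zakai continuity statement you do not supply. The concluding step (Cartan's formula plus the Poincar\'e lemma on $\R^{2n}$ to obtain global $H_r$ with $f_r=\mathbb{J}\nabla H_r$) is fine. By contrast, \cite{mil} verifies symplecticity by a direct computation of $dP\wedge dQ$ in canonical coordinates; your route buys coordinate independence and generalizes to exact symplectic manifolds, at the price of invoking the transport formula.
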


As a consequence, the qualitative approach lead to the same definition as the algebraic one. 

\paragraph{Geometrical analogue}

The geometric formulation of Hamiltonian systems deals with the symplectic form or the Poisson bracket on a symplectic or Poisson manifold. As a consequence, if one want to follow the geometric approach one has to be sure that the previous notions keep sense. This is done for example by C\'ami and Ortega \cite{cami} where they obtain a definition similar to the Bismut one but extends the notion up to cover Poisson Hamiltonian systems.
 
\paragraph{What about the It\^o case ?}

The It\^o version of stochastic Hamiltonian system as defined by Bismut in the Stratonovich case can be obtain using the {\it conversion formula} (\ref{strato-ito}) between Stratonovich and It\^o stochastic differential equations. The expression can be given using the {\bf Hamiltonian Swartz operator} as described by Ortega-Cam\'i in (\cite{cami}, Section 2.1, Proposition 2.8). Also, as remarked by Ortega-Cam\'i in (\cite{cami},Remark 2.5) many authors prefer to use the It\^o formalism as the definition of the It\^o integral is not {\bf anticipative}, that is, it does not assume any knowledge of the system in future times. This is of course the classical problem of {\bf causality} in Physics which is not respected by a Stratonovich modeling. Ortega and Cam\'i (\cite{cami},Remark 2.5) says that their definition also share this property as the Stratonovich definition can be translated in the It\^o framework. This is indeed the case, but the corresponding equation then possess a very different structure which can be difficultly called a Hamiltonian system. Indeed, the deterministic part is changed due to the noise and as a consequence, we obtain a new term which was not present in the initial perturbative scheme. The interpretation is then more difficult from the point of view of modeling.\\

It must be noted that in the It\^o framework, different strategies to construct a definition of a stochastic Hamiltonian system are explored for example in \cite{cresson} and \cite{zambrini} where the analogue of derivatives for stochastic process defined by E. Nelson \cite{nelson} are used.

\subsection{Stochastisation of Linearly dependent Hamiltonian systems}
\label{stochlinearhamiltonian}

We have discussed in Section \ref{stochastisationparameter}, a natural extension of linearly parameter dependent systems in the Stratonovich and It\^o setting. In this Section, we apply this approach to the Hamiltonian case.

\subsubsection{External stochastisation of Hamiltonian systems}

We consider a Hamiltonian function $H(z;p):=H_p (z)$ where $z$ belongs to a Poisson manifold $(P,\{ .,.\} )$ and the parameters $p\in \R^m$. We assume moreover that $H$ is linear with respect to $p$. We consider the Hamiltonian equation 
\begin{equation}
\dot{z} =X_{H_p ,\{ .,.\}} (z) .
\end{equation}
The external stochastisation procedure lead to a family of stochastic Hamiltonian systems of the form 
\begin{equation}
\label{externalhamiltonian}
dZ_t = X_{H_p ,\{ .,.\}} (Z_t) dt + X_{H_{\sigma (Z_t ,t) \star dB_t} ,\{ .,.\}} (Z_t) ,
\end{equation}
where $\sigma (Z_t ,t)$ is an $m \times k$ matrix and $dB_t$ is a $k$ dimensional Brownian motion, the symbol $\star$ is zero in the It\^o case and $\circ$ in the Stratonovich case.\\

If we begin with an Hamiltonian system in canonical coordinates where $z\in \R^{2n}$, we obtain in the symplectic case :
\begin{equation}
dZ_t = \mathbb{J} \nabla_z {H} (Z_t ;p) dt + \mathbb{J} \nabla_z {H} (Z_t ; \sigma( Z_t ,t) \star dB_t ) .
\end{equation}
We remark that if we assume that $B_t$ is a $n$-dimensional Brownian motion and $p$ is a $k$ dimensional parameter space, we obtain taking $\sigma (z,t)$ a constant $m\times m$ matrix of the form 
\begin{equation}
\sigma =\left ( 
\begin{array}{c}
\mathbb{I}
\end{array}
\right ) 
\end{equation}
a stochastic differential equation of the form
\begin{equation}
dZ_t = \mathbb{J} \nabla_z {H} (Z_t ;p) dt + \mathbb{J} \nabla_z {H} (Z_t ;  \star dB_t ) .
\end{equation}

This formula looks like the definition of stochastic Hamiltonian system by J-M. Bismut in \cite{bismut}. The two construction coincide when we restrict our attention to a specific form of Hamiltonian systems:

\begin{lemma}
Assume that $\mathbb{H} :\R^{2n} \times \R^m$ is a linear Hamiltonian system of the form 
\begin{equation}
\mathbb{H}(z;p)=\di\sum_{i=1}^m p_i H_i (z) ,
\end{equation}
where $H_i :\R^{2n} \rightarrow \R$, $i=1,\dots ,m$. Let us consider the Hamiltonian equation associated to $\mathbb{H}$ in canonical coordinates. An external perturbation of this equation under a noise with a drift given by a $m\times m$ matrix of the form 
\begin{equation}
\sigma =\left ( 
\begin{array}{c}
\mathbb{I}
\end{array}
\right ) 
\end{equation}
where $\mathbb{I}$ is the $m\times m$ identity matrix, leads to a stochastic differential equation of the form
\begin{equation}
dZ_t = \mathbb{J} \nabla_z {\mathbb{H}} (Z_t ;p) dt + \di\sum_{i=1}^m \mathbb{J} \nabla_z {H_i} (Z_t ) \star dB_t^i .
\end{equation}
\end{lemma}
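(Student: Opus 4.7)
The plan is to obtain the claimed formula by directly substituting the specific data of the lemma into the general external stochastisation equation \eqref{externalhamiltonian} and exploiting linearity of $\mathbb{H}$ in the parameter $p$. Since we are on the symplectic vector space $\R^{2n}$ with canonical coordinates and $\{\cdot,\cdot\}$ is the canonical Poisson bracket, the Hamiltonian vector field reads $X_{H_p,\{\cdot,\cdot\}} = \mathbb{J}\nabla_z H_p$, so \eqref{externalhamiltonian} specialises to
\begin{equation*}
dZ_t \;=\; \mathbb{J}\nabla_z \mathbb{H}(Z_t;p)\,dt \;+\; \mathbb{J}\nabla_z \mathbb{H}\bigl(Z_t;\sigma(Z_t,t)\star dB_t\bigr).
\end{equation*}
This is the starting point; from here the proof is essentially a computation.

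Next I would use the defining linearity $\mathbb{H}(z;p)=\sum_{i=1}^m p_i H_i(z)$. Differentiation in $z$ commutes with the finite linear combination in $p$, so $\nabla_z \mathbb{H}(z;p) = \sum_{i=1}^m p_i \nabla_z H_i(z)$. In particular, replacing the ``parameter'' $p$ by the $\R^m$-valued increment $\sigma(Z_t,t)\star dB_t$ and using linearity once more gives
\begin{equation*}
\nabla_z \mathbb{H}\bigl(Z_t;\sigma(Z_t,t)\star dB_t\bigr)
\;=\; \sum_{i=1}^m \bigl(\sigma(Z_t,t)\star dB_t\bigr)_i \,\nabla_z H_i(Z_t).
\end{equation*}
This is legitimate in both the It\^o and the Stratonovich interpretations because the coefficient $\nabla_z H_i(Z_t)$ does not depend on $p$, so the substitution is term-by-term and no correction arises from the stochastic integral itself.

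Specialising to $\sigma = \mathbb{I}$, the $i$-th component $(\mathbb{I}\star dB_t)_i$ is simply $dB_t^i$, and applying $\mathbb{J}$ to both sides yields
\begin{equation*}
\mathbb{J}\nabla_z \mathbb{H}\bigl(Z_t;\mathbb{I}\star dB_t\bigr) \;=\; \sum_{i=1}^m \mathbb{J}\nabla_z H_i(Z_t)\,\star dB_t^i.
\end{equation*}
Inserting this into the specialised form of \eqref{externalhamiltonian} produces exactly the expression stated in the lemma.

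There is no real obstacle beyond being explicit about notation: the only point that requires care is the interpretation of $\mathbb{H}(z;\sigma\star dB_t)$, which must be understood as the componentwise substitution justified by the linearity of $\mathbb{H}$ in $p$. Once this is pointed out, the lemma reduces to a one-line computation, and the drift term $\mathbb{J}\nabla_z\mathbb{H}(Z_t;p)\,dt$ is left untouched by the procedure, matching the Bismut-type expression on the right-hand side.
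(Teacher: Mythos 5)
Your proof is correct and is exactly the computation the paper intends: the lemma is stated in the paper without proof (it is presented as an immediate consequence of equation \eqref{externalhamiltonian}), and your argument --- specialising $X_{H_p,\{\cdot,\cdot\}}$ to $\mathbb{J}\nabla_z H_p$ in canonical coordinates, using linearity of $\mathbb{H}$ in $p$ to expand $\nabla_z\mathbb{H}(z;\sigma\star dB_t)$ term by term, and setting $\sigma=\mathbb{I}$ --- supplies precisely the missing one-line verification. Your remark that the substitution is purely algebraic and introduces no It\^o/Stratonovich correction is the right point to make explicit.
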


We recognize the Bismut's definition of a stochastic Hamiltonian system in the Stratonovich case.

\subsubsection{Stochastic perturbation of isochronous Hamiltonian systems}

In this Section, we consider a family of linear Hamiltonian of the form 
\begin{equation}
H_i (I,\theta )=\omega_i I ,\ \ (I,\theta )\in \R\times S^1 ,
\end{equation}
and $\omega_i \in \R$ is a frequency. \\

We consider the $n$-degree of freedom {\bf isochronous Hamiltonian} defined by
\begin{equation}
H(I,\theta )=\omega_1 I_1 +\dots +\omega_n I_n ,\ \ (I,\theta )\in \R\times S^1 .
\end{equation}
The Hamiltonian system associated to $H$ is a set of $n$-oscillators
\begin{equation}
\left \{ 
\begin{array}{lll}
\dot{I}_i & = & 0,\\
\dot{\theta}_i & = & \omega_i .
\end{array}
\right .
\ \ i=1,\dots ,n.
\end{equation}
This equation is an example of {\bf integrable Hamiltonian system} as the solutions can be computed by quadrature. The terminology of {\bf isochronous} system comes from the fact that the frequency of the oscillators is the same for all the initial data.\\

We now make an external stochastic perturbation of the frequency $\omega =(\omega_1 ,\dots ,\omega_n )$. We then obtain the following stochastic differential equation system
\begin{equation}
\label{externalharmonic}
\left \{ 
\begin{array}{lll}
dI_i & = & 0\\
d\theta_i & = & \omega_i +\sigma (I,\theta ,t) \star dB_t 
\end{array}
\right .
\end{equation}
In the Stratonovich setting, this stochastic differential equation is not a stochastic Hamiltonian system unless the vector $(0 ,\sigma (I,\theta ,t)$ can be written as $\mathbb{J} \nabla_{(I,\theta)} K$ for a given scalar function $K(I,\theta )$. As a consequence, $K$ must be a function of $I$ only, i.e. that the diffusion term is also provided by an integrable Hamiltonian system. We then have:

\begin{lemma}
The external stochastic perturbation of the Harmonic Hamiltonian (\ref{externalharmonic}) lead to a stochastic Hamiltonian in the sense of Bismut if and only if the diffusion term is given by integrable Hamiltonian system. 
\end{lemma}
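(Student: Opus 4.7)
The plan is to match (\ref{externalharmonic}) against Bismut's template in the action-angle canonical pairing $(q,p) = (\theta, I)$. First I would expand what it means for (\ref{externalharmonic}) to be a stochastic Hamiltonian system in the sense of Bismut: there must exist functions $H_0, H_1, \ldots, H_m$ on $\R^{2n}$ so that the drift is $\mathbb{J}\nabla H_0$ and each diffusion term is $\mathbb{J}\nabla H_r$. The drift matches trivially with $H_0(I,\theta) = \sum_i \omega_i I_i$, since $\partial H_0/\partial I_i = \omega_i$ and $\partial H_0/\partial \theta_i = 0$, so the entire content of the lemma reduces to characterising the noise.

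For the necessity direction, the decisive observation is that the $I_i$-components of (\ref{externalharmonic}) carry no noise. Under Bismut's template, the contribution of $H_r$ to the noise in $I_i$ is $-(\partial H_r/\partial \theta_i)\star dB_t^r$; setting this to zero for every $i$ and every $r$ forces each $H_r$ to depend on $I$ only. Reading off the $\theta_i$-components then gives $\sigma_{i,r} = \partial H_r(I)/\partial I_i$, so $\sigma$ must be a gradient in $I$ of a function of $I$ alone, and in particular is independent of $\theta$ and $t$. For sufficiency, if $\sigma_{i,r}(I) = \partial K_r(I)/\partial I_i$ for some scalar functions $K_r$, the equation fits Bismut's template verbatim with $H_r = K_r$; and each such $K_r(I)$ is itself the Hamiltonian of an isochronous, hence integrable, system, with equations $\dot I_i = 0$ and $\dot\theta_i = \partial K_r/\partial I_i$. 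This is the precise meaning of the phrase ``the diffusion term is given by an integrable Hamiltonian system''.

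The only real subtlety I foresee is bookkeeping around the Stratonovich-versus-It\^o choice embedded in the symbol $\star$: one must verify that the Wong-Zakai correction does not spoil the characterisation. This should be painless here, since the necessity argument already forces $\sigma$ to be independent of $\theta$ and the $I$-component of the diffusion to vanish identically, so by the conversion formula (\ref{strato-ito}) the correction drift is zero and the two interpretations coincide. In higher degrees of freedom, a closedness condition $\partial \sigma_{i,r}/\partial I_j = \partial \sigma_{j,r}/\partial I_i$ is implicit in the statement in order to integrate $\sigma$ back to a scalar $K_r$; I would make this explicit when writing the full proof.
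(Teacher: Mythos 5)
Your proposal is correct and follows essentially the same route as the paper, which justifies the lemma only by the informal remark that the diffusion vector $(0,\sigma)$ must equal $\mathbb{J}\nabla_{(I,\theta)}K$, forcing $K$ to depend on $I$ alone. Your additional checks --- that the vanishing of the Wong--Zakai correction makes the It\^o and Stratonovich readings agree once $\sigma$ is independent of $\theta$, and that for $n>1$ a closedness condition $\partial\sigma_{i,r}/\partial I_j=\partial\sigma_{j,r}/\partial I_i$ is needed to integrate $\sigma$ to a potential $K_r$ --- are genuine refinements that the paper leaves implicit.
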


The Hamiltonian assumption on the diffusion can then induce an important change in the dynamics: we begin with an isochronous systems but adding a small Hamiltonian noise can for example produce a {\bf Harmonic oscillator} for which the frequency is dependent of the initial torus on which the dynamics takes place. 

\subsubsection{Stochastic Poisson structure of the Larmor equation}

The previous computations can be used for the Larmor equation which possesses a Poisson Hamiltonian structure (see Section \ref{poissonlarmor}, Lemma \ref{poissonlarmorlemma}) attached to the linear Hamiltonian $H(z;b)=H_b (z)=z.b$ for a vector $b\in \R^3$ fixed and $z\in \R^3$ and the Poisson bracket $\{ . , .\}_-$. The Hamiltonian vector field associated to $H$ and $\{ . ,.\}_-$ is linear in $b$ and given by $X_H (z) = z\wedge b$ (see Lemma \ref{hamlarmorpoisson}). Following equation (\ref{externalhamiltonian}), under a stochastic external perturbation the Larmor equation leads to 
\begin{equation}
dZ_t = X_{H_b ,\{ .,.\}} (Z_t) dt + X_{H_{\sigma (Z_t ,t) \star dB_t} ,\{ .,.\}} (Z_t) .
\end{equation}
Using our previous result of Section \ref{larmor}, we know that the sphere $S^2$ is always preserved (see Lemma \ref{larmorinvariance}) but the set of equilibrium points is preserved if and only the diffusion term is of the form $b\gamma (z) dB_t$ where $B_t$ is one dimensional (see Lemma \ref{larmorinvariance}). Taking these result into account, we define the following stochastic version of the Larmor equation :
\begin{equation}
\label{larmorstocalter}
dZ_t = X_{H_b ,\{ .,.\}} (Z_t) dt + X_{H_{b\gamma (Z_t ,t) \star dB_t} ,\{ .,.\}} (Z_t) ,
\end{equation}
where $\gamma (Z_t ,t)$ is a scalar function and $B_t$ is a one dimensional Brownian motion. Due to the linearity of $X_{H_b}$ with respect to $b$, equation (\ref{larmorstocalter}) can be rewritten as
\begin{equation}
\label{larmorstocalter2}
dZ_t = X_{H_b ,\{ .,.\}} (Z_t) dt + X_{H_b ,\{ .,.\}} (Z_t) \gamma (Z_t ,t) \star dB_t .
\end{equation}
We then recover a stochastic Hamiltonian system if the function $\gamma (Z_t ,t)$ is a constant $\gamma \in \R$:
\begin{equation}
\label{larmorstocalter3}
dZ_t = X_{H_b ,\{ .,.\}} (Z_t) dt + \gamma X_{H_b ,\{ .,.\}} (Z_t) \star dB_t .
\end{equation}

We can resume the previous discussion in the following Lemma:

\begin{lemma}
For all $\gamma \in \R^*$, the stochastic Larmor equation 
\begin{equation}
\label{larmorstocalter4}
dZ_t = (Z_t \wedge b) (dt + \gamma \star dB_t ) ,
\end{equation}
possesses the following properties:
\begin{itemize}
\item The sphere $S^2$ is invariant under the flow of equation (\ref{larmorstocalter4}).
\item The equilibrium points $\pm b/\parallel b\parallel$ of the Larmor equation persist in the stochastic case.
\item The Poisson Hamiltonian structure is preserved with the Poisson Hamiltonian $\mathbb{H} =\{ H_b ,\gamma H_b \}$.
\end{itemize}
\end{lemma}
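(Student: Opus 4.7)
The plan is to verify the three claims in order, each of which is essentially a direct consequence of results established earlier in the excerpt; the core idea is that the diffusion vector field is a scalar multiple of the drift vector field, so every property of the drift that is stable under scaling passes automatically to the diffusion.

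For the invariance of $S^2$, I would apply the criterion used in Lemma \ref{larmorinvariance}. The diffusion coefficient of (\ref{larmorstocalter4}) is $\sigma(t,Z_t) = \gamma\,(Z_t \wedge b)$, which, when dotted with $Z_t$, gives $\gamma\,Z_t \cdot (Z_t \wedge b) = 0$. Combined with the already established identity $\mu \cdot (\mu \wedge b) = 0$ for the drift, the hypothesis $\nabla F(z) \cdot f(t,z) = 0 = \nabla F(z) \cdot \sigma(t,z)$ of that lemma is satisfied for $F(z) = \frac{1}{2}(\parallel z \parallel^2 - 1)$, so invariance follows in the Stratonovich reading of $\star$. (Equivalently, one recognizes that $F$ is exactly the Casimir function of Lemma \ref{casimir}, so $\{F, H_b\}_- = 0$ and, by linearity, $\{F, \gamma H_b\}_- = 0$, which is the Poisson-geometric incarnation of the same computation.)

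For the persistence of the equilibrium points, I would argue pointwise rather than invoke Lemma \ref{persistenceexternalequilibrium}: at $z_\star = \pm b / \parallel b \parallel$ one has $z_\star \wedge b = 0$, so both the drift vector field $X_{H_b}(z_\star) = z_\star \wedge b$ and the diffusion coefficient $\gamma\,(z_\star \wedge b)$ vanish simultaneously. Hence the constant process $Z_t \equiv z_\star$ is a solution of (\ref{larmorstocalter4}), and conversely, any equilibrium must annihilate the drift, which restricts $z$ to $\langle b \rangle \cap S^2 = \{\pm b/\parallel b \parallel\}$ by Lemma \ref{classlarmor}.

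For the Poisson Hamiltonian structure, I would directly recognize (\ref{larmorstocalter4}) as the Bismut stochastic Hamiltonian system of Section \ref{bismutham2}, extended to the Poisson setting as in \cite{cami}, with family $\mathbb{H} = \{H_b,\,\gamma H_b\}$ on the Poisson manifold $(\R^3, \{\cdot,\cdot\}_-)$. Indeed, by Lemma \ref{hamlarmorpoisson} one has $X_{H_b, \{\cdot,\cdot\}_-}(z) = z \wedge b$, and by linearity of the Hamiltonian vector field construction in the Hamiltonian function, $X_{\gamma H_b, \{\cdot,\cdot\}_-}(z) = \gamma\,(z \wedge b)$. Substituting into the canonical Bismut form
\begin{equation}
dZ_t = X_{H_b, \{\cdot,\cdot\}_-}(Z_t)\, dt + X_{\gamma H_b, \{\cdot,\cdot\}_-}(Z_t) \circ dB_t
\end{equation}
reproduces (\ref{larmorstocalter4}) verbatim.

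The main obstacle, and the point that requires the most care in the write-up, is the interpretation of the symbol $\star$: the Poisson-Hamiltonian claim in the third bullet really uses the Stratonovich reading, since the algebraic correspondence between the Bismut form and the Poisson-geometric vector field relies on the classical chain rule enjoyed only by $\circ dB_t$. In the It\^o reading one would need to add the Wong--Zakai correction term computed via (\ref{strato-ito}) with $\sigma(z) = \gamma\,(z \wedge b)$ and verify that this correction happens to be tangent to $S^2$ and to vanish at $\pm b/\parallel b\parallel$, which it does because $z \wedge b$ vanishes to first order at those points; this side calculation is the only nontrivial verification beyond the algebraic bookkeeping above.
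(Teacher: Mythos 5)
Your three verifications for the Stratonovich reading are correct and follow essentially the same route as the paper: the paper obtains the first two bullets by citing Lemma \ref{larmorinvariance} and Lemma \ref{larmorequilibrium} (your pointwise argument at $\pm b/\parallel b\parallel$ is a slightly more elementary rephrasing of the latter, and is fine), and the third bullet by exactly the linearity observation $X_{\gamma H_b}=\gamma X_{H_b}$ that turns (\ref{larmorstocalter2}) into the Bismut form.

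The one genuine error is in your closing paragraph about the It\^o reading. First, the Wong--Zakai correction with $\sigma(z)=\gamma\,(z\wedge b)$ is $\tfrac{\gamma^2}{2}\,(z\wedge b)\wedge b=\tfrac{\gamma^2}{2}\bigl[(b\cdot z)\,b-\parallel b\parallel^2 z\bigr]$, and $z\cdot\bigl[(z\wedge b)\wedge b\bigr]=-\parallel z\wedge b\parallel^2$, which is strictly negative everywhere on $S^2$ except at $\pm b/\parallel b\parallel$; so the correction is \emph{not} tangent to the sphere, contrary to what you assert (the fact that $z\wedge b$ vanishes at the equilibria only gives you the vanishing of the correction there, not tangency elsewhere). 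Second, and more structurally, tangency of the drift would not suffice anyway: by Theorem \ref{invarianceito} and Corollary \ref{spherei} of the paper, strong It\^o invariance of the sphere forces the diffusion coefficient itself to vanish on $S^2$, which $\gamma\,(z\wedge b)$ does not. So the plain It\^o interpretation of (\ref{larmorstocalter4}) does \emph{not} preserve $S^2$ (the non-tangent drift correction is exactly what compensates the second-order It\^o term when one converts the Stratonovich equation, but that is a statement about the converted equation, not about the equation read directly in It\^o form). The lemma therefore has to be read with $\star=\circ$ throughout, which is consistent with the surrounding discussion in the paper; your main proof stands once that restriction is made explicit and the erroneous side claim is removed.
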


The stochastic Larmor equation (\ref{larmorstocalter4}) preserves all the properties of the initial equation. Of course, relaxing the set of constraints, in particular concerning the persistence of the equilibrium points, leads to a lager class of model.

\subsection{Hamiltonian RODE}

The previous discussion shows that depending on the framework, it is not so easy to find a satisfying definition of a stochastic Hamiltonian system. In this Section, we consider an Hamiltonian system on a Poisson manifold $(P,\{ .,. \} )$ associated to a parameter dependent Hamiltonian function $H (z;p)=H_p (z)$ and given by
\begin{equation}
\dot{z} =X_{H_p , \{ .,.\}} (z) .
\end{equation}
Let $\eta_t$ be a given stochastic process. A $\eta_t$-RODE stochastic version of a Poisson Hamiltonian system is then of the form
\begin{equation}
\label{rodehamilton}
\dot{z} =X_{H_{\eta_t (\omega )} , \{ .,.\}} (z) ,
\end{equation}
for all realization $\omega \in \Omega$. 

\subsubsection{Properties of RODE Hamiltonian systems}

All the properties of Poisson Hamiltonian systems are preserved under any $\eta_t$-RODE stochastisation. In particular, we have :

\begin{lemma}
\label{bracketrode}
A function $F\in \mathcal{F} (P)$ is a first integral of the stochastic $\eta_t$-RODE Hamiltonian equation (\ref{rodehamilton}) if and only if $\{ F,H_{\eta_t (\omega )} \} =0$ for all $\omega \in \Omega$ and $t\in \R$. 
\end{lemma}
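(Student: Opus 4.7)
The plan is to reduce the statement to the deterministic Poisson Hamiltonian framework by a sample-path argument, exploiting the fact that a RODE, once a realization $\omega \in \Omega$ is fixed, is just a (non-autonomous) classical ODE to which the ordinary chain rule applies. This is exactly the same mechanism that was used in Theorem \ref{persistencerode} on persistence of invariance, and also in the proof of Lemma \ref{invariancelarmor}; the only new ingredient is to replace $\nabla F \cdot f$ by its Poisson-bracket expression.

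First I would fix an arbitrary realization $\omega \in \Omega$ and view equation (\ref{rodehamilton}) as the deterministic non-autonomous equation $\dot{z} = X_{H_{\eta_t(\omega)}, \{.,.\}}(z)$. Let $z_t = z_t(\omega)$ be a solution. Since $\eta_t(\omega)$ has continuous sample paths and $z_t$ is $C^1$ in $t$, the usual chain rule yields
\begin{equation*}
\frac{d}{dt} F(z_t) \;=\; \nabla F(z_t) \cdot \dot{z}_t \;=\; \nabla F(z_t) \cdot X_{H_{\eta_t(\omega)}, \{.,.\}}(z_t) \;=\; X_{H_{\eta_t(\omega)}}[F](z_t) \;=\; \{F, H_{\eta_t(\omega)}\}(z_t),
\end{equation*}
where the last two equalities are the defining property of the Hamiltonian vector field (cf. Proposition 10.2.1 recalled in the text) applied pathwise.

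For the ``if'' direction, assume $\{F, H_{\eta_t(\omega)}\} = 0$ identically for all $\omega \in \Omega$ and $t \in \R$. Evaluating at the point $z_t$ gives $\frac{d}{dt} F(z_t) = 0$ for every realization, hence $F$ is constant along every solution of (\ref{rodehamilton}), i.e.\ a first integral.

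For the converse, assume $F$ is a first integral, so that $\frac{d}{dt} F(z_t(\omega)) = 0$ for every $\omega$ and every solution. Given any $(t_0, z_0, \omega_0)$, I would use the existence of a solution passing through $z_0$ at time $t_0$ with the prescribed noise realization $\omega_0$ (guaranteed by the existence remark after the definition of RODEs, provided the standing regularity hypotheses on $f$ and on the sample paths of $\eta_t$ are in force) and evaluate the pathwise identity above at $t = t_0$. This yields $\{F, H_{\eta_{t_0}(\omega_0)}\}(z_0) = 0$, and since $(t_0, z_0, \omega_0)$ was arbitrary the condition holds on all of $\R \times P \times \Omega$. The only mild subtlety, which I would address by invoking the continuity of sample paths and the local existence of solutions with arbitrary initial data, is ensuring that $z_0$ really can be prescribed independently of $\omega_0$ at an arbitrary time $t_0$; no further difficulty arises because the argument is entirely deterministic once $\omega$ is frozen.
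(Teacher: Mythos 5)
Your proposal is correct and follows essentially the same route as the paper's own proof: fix a realization $\omega$, differentiate $F$ along the flow by the ordinary chain rule, and identify $\nabla F \cdot X_{H_{\eta_t(\omega)}}$ with $\{F, H_{\eta_t(\omega)}\}$ via the defining property of the Hamiltonian vector field. Your additional care in the converse direction (prescribing arbitrary $(t_0, z_0, \omega_0)$ and invoking local existence) only makes explicit a point the paper leaves implicit.
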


\begin{proof}
Let $\phi_t (\omega )$ be the flow generated by equation (\ref{rodehamilton}). We have 
\begin{equation}
\left .
\begin{array}{lll}
\di\frac{d}{dt} [F(\phi_t (\omega )] & = & DF (\phi_t (\omega )) .\di\frac{d\phi_t (\omega )}{dt} ,\\
    & = & DF (\phi_t (\omega )) .X_{H_{\eta_t}, \{ .,.\}} (\phi_t (\omega )) ,\\
     & = & \{ F,H_{\eta_t} \} 
\end{array}
\right .
\end{equation}
A function $F$ is a first integral of equation (\ref{rodehamilton}) if and only if $\di\frac{d}{dt} [F(\phi_t (\omega )] =0$ which is equivalent to $\{ F,H_{\eta_t} \} =0$. This concludes the proof.
\end{proof}

A useful result is obtain for the Casimir functions of the Poisson structure. Indeed, we obtain:

\begin{lemma}
\label{casimirintegral}
A Casimir function for the Poisson structure $\{ .,.\}$ is a first integral is a first integral of any stochastic $\eta_t$-RODE Hamiltonian system (\ref{rodehamilton}).
\end{lemma}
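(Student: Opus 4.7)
The plan is to deduce this statement directly from the characterization of first integrals for RODE Hamiltonian systems established just above in Lemma \ref{bracketrode}, combined with the defining property of a Casimir function recalled in the paragraph preceding Corollary \ref{poissonintegral}.

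First, I would recall that by definition a Casimir function $C\in\mathcal{F}(P)$ satisfies $\{C,F\}=0$ for every $F\in\mathcal{F}(P)$. Next, I would observe that for each fixed realization $\omega\in\Omega$ and each fixed time $t\in\mathbb{R}$, the function $z\mapsto H_{\eta_t(\omega)}(z)$ is an element of $\mathcal{F}(P)$, since $\eta_t(\omega)$ is just a particular value of the parameter. Applying the Casimir property with $F=H_{\eta_t(\omega)}$ therefore yields
\begin{equation}
\{C,H_{\eta_t(\omega)}\}=0\quad\text{for all }\omega\in\Omega\text{ and all }t\in\mathbb{R}.
\end{equation}

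Finally, I would invoke Lemma \ref{bracketrode}, which states that $F\in\mathcal{F}(P)$ is a first integral of the $\eta_t$-RODE Hamiltonian system (\ref{rodehamilton}) if and only if $\{F,H_{\eta_t(\omega)}\}=0$ for all $\omega$ and $t$. With $F=C$, the criterion is satisfied by the previous step, so $C$ is a first integral. No obstacle is expected: the statement is essentially a one-line corollary of Lemma \ref{bracketrode} and the pointwise (in $\omega,t$) applicability of the Casimir defining identity, which is why I would present it as a direct consequence rather than as an independent calculation.
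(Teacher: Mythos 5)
Your proposal is correct and follows essentially the same route as the paper: apply the Casimir identity $\{C,G\}=0$ with $G=H_{\eta_t(\omega)}$ for each fixed $\omega$ and $t$, then conclude by the first-integral characterization of Lemma \ref{bracketrode}. Your version is in fact slightly more careful than the paper's, since you make explicit that $H_{\eta_t(\omega)}$ is a legitimate element of $\mathcal{F}(P)$ for each fixed realization and time.
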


\begin{proof}
It $F$ is a Casimir function then for any function $G$ we have $\{ F,G \}=0$. By Lemma \ref{bracketrode}, a function $F$ is a first integral for equation (\ref{rodehamilton}) if $\{ F, H_{\eta_t (\omega )} \}=0$ which is always the case when $F$ is a Casimir function.
\end{proof} 

\subsubsection{Poisson Hamiltonian structure of a stochastic RODE Larmor equation}

We refer to Sections \ref{larmor1} and \ref{larmor2} for the definition of a RODE Larmor equation. Using the previous Section, we can easily obtain an alternative proof of Lemma \ref{invariancelarmor}:\\

By Lemma \ref{casimir}, the function $F(z)=\di\frac{\parallel z \parallel^2}{2}$ is a Casimir function of the Poisson manifold $(\R^3 ,\{ .,.\}_{\pm} )$. Using Lemma \ref{casimirintegral}, we deduce that $F$ is also a first integral of any RODE Larmor equation. As a consequence, any RODE Larmor equation preserves the sphere $S^2$.

\subsection{Double bracket dissipation and Hamiltonian structure}

In this Section, we discuss some particular dissipative systems for which the dissipation term can be described using a double bracket (see \cite{ratiu}). A typical example is given by the Landau-Lifshitz equation in ferromagnetism. We then study how this structure behaves under a stochastic perturbation. 

\subsubsection{Double bracket dissipation}

We follow \cite{ratiu} for this Section. We consider systems of the form
\begin{equation}
\dot{F} =\{ F,H\} -\{\{F,H\}\} ,
\end{equation}
where $H$ is the total energy of the system, $\{ F ,H\}$ is a skew symmetric bracket which is a Poisson bracket in the usual sense and where $\{ \{ F,H\} \}$ is a symmetric bracket.\\

The main property of these systems is that energy is dissipated but angular momentum is not. \\

Using the Poisson tensor denoted by $\Lambda$, given for all $z\in P$ by $\Lambda(z) : T_z^* P \rightarrow T_z P$ and $\Lambda (dH)=X_H$, i.e. 
\begin{equation}
\langle dF ,\Lambda (dH )\rangle =\{ F ,H\} ,
\end{equation}
we can write the previous double bracket as (see \cite{ratiu},$\S$.7):
\begin{equation}
\{\{ F,H\}\} =-\langle F,\Lambda \alpha \Lambda dH \rangle =\alpha (X_F ,X_H ) ,
\end{equation}
where $\alpha_z$ is a map from $T_z S \rightarrow T_z^* S$, where $S$ is the symplectic leaf through $z$, induced by a Riemannian metric defined on each leaf of $P$.\\

Using these maps, one can write the previous equation as 
\begin{equation}
\di\frac{dz}{dt} = \Lambda_z dH (z) +\Lambda_z \alpha_z \Lambda_z dH (z) ,
\end{equation}
where $\Lambda_z \alpha_z \Lambda_z dH (z)$ is the dissipation term.

\subsubsection{The Lie-Poisson instability theorem}

An important feature of these dissipative terms is that they do not destroy the equilibrium. Precisely, we have (see \cite{ratiu},Proposition 7.1):

\begin{proposition}
\label{bracketequilibrium}
If $z_e$ is an equilibrium for a Hamiltonian system with Hamiltonian $H$ on a Poisson manifold, then it is also an equilibrium for the system with added dissipative term of the form $\Lambda \alpha \Lambda dH$ as above, or double bracket form on the dual of a Lie algebra.
\end{proposition}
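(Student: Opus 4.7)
The plan is to show both equilibrium conditions collapse to the same algebraic requirement, namely $\Lambda_{z_e}dH(z_e)=0$, so the added dissipation automatically vanishes at $z_e$.

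First I would unpack what it means for $z_e$ to be an equilibrium of the undissipated Hamiltonian system. By definition the vector field is $X_H(z)=\Lambda_z\, dH(z)$, so $z_e$ is an equilibrium precisely when
\begin{equation}
\Lambda_{z_e}\, dH(z_e)=0.
\end{equation}
Equivalently, using the bracket characterization, $\{F,H\}(z_e)=\langle dF(z_e),\Lambda_{z_e}dH(z_e)\rangle=0$ for every $F\in\mathcal{F}(P)$.

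Next I would evaluate the full vector field of the dissipatively perturbed system at $z_e$. The perturbed equation is
\begin{equation}
\frac{dz}{dt}=\Lambda_z\, dH(z)+\Lambda_z\,\alpha_z\,\Lambda_z\, dH(z).
\end{equation}
Both terms contain the factor $\Lambda_z\, dH(z)$. At $z=z_e$ this factor is zero by the equilibrium hypothesis above, so the dissipative term reads $\Lambda_{z_e}\,\alpha_{z_e}(0)=0$ because $\alpha_{z_e}$ is linear, and the Hamiltonian term is zero by hypothesis. Hence the full right-hand side vanishes at $z_e$, which is exactly the statement that $z_e$ is still an equilibrium of the dissipative system.

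For the second assertion, the double bracket dissipation on the dual of a Lie algebra is a specific instance of the construction $\Lambda\,\alpha\,\Lambda\, dH$, where $\Lambda$ is the Lie-Poisson tensor and $\alpha$ is built from an invariant metric; the same factorisation therefore applies verbatim, and the dissipative term again picks up the factor $\Lambda_{z_e}dH(z_e)=0$. There is essentially no obstacle here: the only thing one must be careful about is that $\alpha_{z_e}$ really is a linear map defined on the tangent space at $z_e$ (in particular it sends $0$ to $0$), which is guaranteed by the definition of $\alpha$ as a fibrewise metric-induced isomorphism $T_zS\to T_z^*S$ on each symplectic leaf. Once this is noted, the argument is a one-line factorisation.
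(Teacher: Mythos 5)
Your proof is correct. The paper itself does not prove this proposition --- it simply refers the reader to Marsden--Ratiu \cite{ratiu} (see also Bloch et al.\ \cite{bloch}) --- and your factorisation argument, namely that the dissipative term $\Lambda_z\,\alpha_z\,\Lambda_z\,dH(z)$ contains $X_H(z)=\Lambda_z\,dH(z)$ as its innermost factor and therefore vanishes at $z_e$ by linearity of $\alpha_{z_e}$ and $\Lambda_{z_e}$, is precisely the standard one-line proof given in those references.
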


We refer to \cite{ratiu} for a proof.

\subsubsection{Example: The Landau-Lifshitz equation}
\label{landaulifshitzdoublebracket}

In this Section, we prove that the Landau-Lifshitz equation given by 
\begin{equation}
\label{llequation}
\di\frac{dz}{dt} =z\wedge b -\alpha z \wedge (z\wedge b) ,
\end{equation}
for $z\in \R^3$ can be written as a dissipative system with as a perturbation of a Poisson Hamiltonian system under a double bracket dissipation term, i.e. that there exist a Poisson bracket $\{ .,.\}$, a double bracket $\{\{ .,.\}\}$ and a Hamiltonian $H(z)$ such that equation (\ref{llequation}) is given by 
\begin{equation}
\di\frac{dz}{dt} =X_{H,\{ .,.\}} (z) +X_{H,\{\{ .,.\}\}} (z), 
\end{equation}
where $X_{H,\{ .,.\}} (z)$ and $X_{H,\{\{ .,.\}\}} (z)$ satisfy 
\begin{equation}
X_{H,\{ .,.\}} [F]=\{ F ,H\},\ \ \mbox{and}\ \ \ X_{H,\{\{ .,.\}\}} [F]=\{\{ F,H\}\},
\end{equation}
respectively. \\

For $\alpha=0$ the Landau-Lifshitz equation reduces to the Larmor equation whose Poisson structure was already studied in Section \ref{poissonlarmor}. In particular, the Poisson bracket is given by (see equation \ref{bracketlarmor})
\begin{equation}
\{ F ,G \}_{\pm} (z)=\pm z.\left ( \nabla F \wedge \nabla G \right ) ,
\end{equation}
and the Hamiltonian is $H(z)=z.b$. \\

Following \cite{ratiu} and \cite{liu}, the double bracket for the Landau-Lifshitz equation is given by:
\begin{equation}
\label{doublebracketll}
\{\{ F,G\}\} (z)=\alpha [z\wedge \nabla F(z)]\cdot [z\wedge \nabla G (z) ] .
\end{equation}
We derive easily the dissipation term induced by this double bracket:

\begin{lemma}
The dissipation term induced by the double bracket (\ref{doublebracketll}) with the Hamiltonian $H(z)=z.b$ is given by $-\alpha z\wedge (z\wedge b)$.
\end{lemma}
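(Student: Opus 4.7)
The plan is a direct computation unfolding the definition of the Hamiltonian-style vector field attached to the symmetric bracket $\{\{\cdot,\cdot\}\}$, in analogy with the definition $X_H[F] = \{F,H\}$ used for the Poisson bracket throughout Section \ref{poissonhamiltonian}. By definition, the dissipation vector field $X_{H,\{\{\cdot,\cdot\}\}}$ must satisfy $X_{H,\{\{\cdot,\cdot\}\}}[F](z) = \{\{F,H\}\}(z)$ for every smooth $F$, i.e. $\nabla F(z) \cdot X_{H,\{\{\cdot,\cdot\}\}}(z) = \{\{F,H\}\}(z)$.

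First I would compute $\nabla H(z) = b$, since $H(z) = z \cdot b$ is linear in $z$. Substituting into the expression (\ref{doublebracketll}), this gives
\begin{equation*}
\{\{F,H\}\}(z) = \alpha\,[z \wedge \nabla F(z)] \cdot [z \wedge b].
\end{equation*}
The key step is then to rewrite the right-hand side in the form $\nabla F(z) \cdot V(z)$ so that the vector field $V(z)$ can be read off. For this I would use the cyclic invariance of the scalar triple product, $u \cdot (v \wedge w) = v \cdot (w \wedge u) = w \cdot (u \wedge v)$: setting $w = z \wedge b$, the identity yields
\begin{equation*}
[z \wedge \nabla F(z)] \cdot (z \wedge b) = \nabla F(z) \cdot \bigl[(z \wedge b) \wedge z\bigr].
\end{equation*}
Reading off the vector field and applying antisymmetry of the cross product gives $X_{H,\{\{\cdot,\cdot\}\}}(z) = \alpha\,(z\wedge b)\wedge z = -\alpha\, z \wedge (z \wedge b)$, which is the announced dissipation term.

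There is essentially no obstacle beyond executing this triple-product manipulation correctly; the only thing that requires a brief comment is justifying that the assignment $F \mapsto \{\{F,H\}\}(z)$ is a derivation at $z$ that only depends on $\nabla F(z)$, which is clear by inspection of formula (\ref{doublebracketll}), so a unique vector field $X_{H,\{\{\cdot,\cdot\}\}}$ is indeed well-defined and equals the one computed above.
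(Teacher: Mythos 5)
Your proposal is correct and follows essentially the same route as the paper: compute $\nabla H(z)=b$, use the cyclic invariance of the scalar triple product to rewrite $\alpha\,[z\wedge\nabla F(z)]\cdot[z\wedge b]$ as $\nabla F(z)\cdot\bigl[\alpha\,(z\wedge b)\wedge z\bigr]$, and read off the vector field $-\alpha\, z\wedge(z\wedge b)$ from the defining relation $X_{H,\{\{\cdot,\cdot\}\}}[F]=\{\{F,H\}\}$. Your closing remark on why the assignment $F\mapsto\{\{F,H\}\}(z)$ determines a unique vector field is a welcome extra precision that the paper leaves implicit.
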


\begin{proof}
We have 
\begin{equation}
\left .
\begin{array}{lll}
\{ \{ F, H\}\} & = & \alpha [z\wedge \nabla F (z)] \cdot [z\wedge b ] ,\\
 & = & -\alpha [z\wedge b] \cdot [\nabla F (z) \wedge b] ,\\
  & = & \alpha \nabla F (z) \cdot ((z\wedge b)\wedge z ) ,\\
  & = & \nabla F (z) \left [ -\alpha z \wedge (z\wedge b) \right ] .
\end{array}
\right .
\end{equation}
The vector field associated to this double bracket is defined by
\begin{equation}
X_{H,\{\{ .,.\}\}} [F]=\{\{ F,H\}\} =\nabla F (z)\cdot X_{H,\{\{ .,.\}\}} (z) ,
\end{equation}
from which we deduces that 
\begin{equation}
X_{H,\{\{ .,.\}\}} (z) =-\alpha z \wedge (z\wedge b) .
\end{equation}
This concludes the proof.
\end{proof}

As a corollary, we obtain the following result:

\begin{corollary}
The Landau-Lifshitz equation is a double bracket dissipative Hamiltonian system over $(\R^3 ,\{ .,.\}_-)$ and the double bracket $\{\{ .,.\}\}$ defined by equation (\ref{doublebracketll}).
\end{corollary}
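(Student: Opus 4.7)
The plan is to assemble the Landau-Lifshitz equation \eqref{llequation} directly from the two preceding lemmas, so the proof reduces to identifying the right pieces and summing them; no new computation is needed beyond sign tracking.

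First, I would extract the conservative part. By Lemma \ref{hamlarmorpoisson}, applied with the minus sign of the rigid-body bracket, the Hamiltonian vector field on $(\R^3, \{.,.\}_-)$ generated by $H(z) = z \cdot b$ is
\begin{equation*}
X_{H,\{.,.\}_-}(z) = z \wedge b,
\end{equation*}
which reproduces the Larmor (conservative) part of \eqref{llequation} and is consistent with Lemma \ref{poissonlarmorlemma}. Next, I would extract the dissipative part from the lemma immediately preceding this corollary: with the same Hamiltonian $H$ and the symmetric bracket $\{\{.,.\}\}$ defined by \eqref{doublebracketll}, the associated vector field is
\begin{equation*}
X_{H,\{\{.,.\}\}}(z) = -\alpha\, z \wedge (z \wedge b),
\end{equation*}
which is precisely the dissipative term in \eqref{llequation}.

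Adding the two contributions gives $\dot{z} = z \wedge b - \alpha\, z \wedge (z \wedge b)$, matching \eqref{llequation} term for term, so the Landau-Lifshitz equation has the claimed decomposition $\dot{z} = X_{H,\{.,.\}_-}(z) + X_{H,\{\{.,.\}\}}(z)$. The only residual verifications are bookkeeping: that the symmetric bracket \eqref{doublebracketll} is indeed symmetric in $F$ and $G$ (immediate from symmetry of the Euclidean inner product after replacing $\nabla F$ and $\nabla G$) and that $\{.,.\}_-$ is a genuine Poisson bracket on $\R^3$ (already recorded in Section \ref{poissonhamiltonian} as the rigid-body Lie-Poisson bracket). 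Consequently there is essentially no obstacle; the corollary follows as a direct consequence of the two preceding lemmas once the sign of the Poisson bracket is correctly aligned with the sign appearing in \eqref{llequation}.
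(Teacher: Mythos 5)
Your proposal is correct and follows exactly the route the paper intends: the corollary is stated as an immediate consequence of Lemma \ref{poissonlarmorlemma} (giving $X_{H,\{.,.\}_-}(z)=z\wedge b$) and the lemma immediately preceding it (giving $X_{H,\{\{.,.\}\}}(z)=-\alpha\, z\wedge(z\wedge b)$), and summing these reproduces \eqref{llequation}. Your sign bookkeeping with the $\{.,.\}_-$ convention matches the paper's, so nothing further is needed.
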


An interesting consequence of this structure is that, using Proposition \ref{bracketequilibrium}, we obtain directly the following Lemma:

\begin{lemma}
The Landau-Lifshitz equation (\ref{doublebracketll}) possesses the same equilibrium points as the Larmor equation, i.e. $\pm b/\parallel b\parallel$.
\end{lemma}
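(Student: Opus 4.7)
The plan is to split the proof into two inclusions: equilibria of the Larmor equation are equilibria of the Landau-Lifshitz (LL) equation, and conversely any equilibrium of LL is already one of Larmor. The first inclusion is a direct consequence of the double-bracket structure established just above, and the second is a short algebraic computation.

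For the forward inclusion, I would simply invoke the previous Corollary, which writes the LL equation in the form $\dot z = X_{H,\{.,.\}_-}(z) + X_{H,\{\{.,.\}\}}(z)$ with $H(z)=z\cdot b$. Since the dissipative term arises from the double bracket $\{\{.,.\}\}$, Proposition \ref{bracketequilibrium} applies and guarantees that every equilibrium of the underlying Poisson Hamiltonian system, namely the Larmor equation, remains an equilibrium after adding the dissipation. By Lemma \ref{equilibriumlarmor} (applied to the deterministic case with $\eta_t\equiv b$), these equilibria are the points of $\langle b\rangle$, and restricting to $S^2$ yields $\pm b/\|b\|$.

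For the reverse inclusion, suppose $z$ satisfies
\begin{equation}
z\wedge b -\alpha\, z\wedge(z\wedge b)=0.
\end{equation}
Taking the scalar product with $b$, the first term drops out, and using the cyclic property of the scalar triple product one gets
\begin{equation}
\alpha\,(z\wedge(z\wedge b))\cdot b = -\alpha\,(z\wedge b)\cdot(z\wedge b) = -\alpha\,\|z\wedge b\|^{2}.
\end{equation}
Since $\alpha>0$, this forces $z\wedge b=0$, i.e.\ $z\in\langle b\rangle$. In particular, on $S^2$ only $\pm b/\|b\|$ remain, giving the reverse inclusion.

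I expect no serious obstacle: the one-line scalar-triple-product identity is the only nontrivial step, and the deeper geometric content, namely that dissipation of double-bracket type does not create or destroy equilibria, is already encapsulated in Proposition \ref{bracketequilibrium}. The only point to be careful about is that Proposition \ref{bracketequilibrium} supplies the inclusion in one direction only; the converse really does require the short computation above, otherwise one could worry about spurious equilibria produced by the dissipative term balancing the Hamiltonian one.
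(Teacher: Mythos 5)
Your proposal is correct, and for the forward inclusion it follows exactly the paper's route: the paper's entire justification for this lemma is the sentence ``using Proposition \ref{bracketequilibrium}, we obtain directly the following Lemma,'' i.e.\ the persistence of Hamiltonian equilibria under a double-bracket dissipative term. Where you genuinely add something is the reverse inclusion. You are right that Proposition \ref{bracketequilibrium} only guarantees that equilibria of the Larmor equation survive the addition of the dissipation; it does not exclude new equilibria where $-z\wedge b$ and $-\alpha\, z\wedge(z\wedge b)$ cancel, so the paper's one-line derivation of ``the \emph{same} equilibrium points'' is, strictly read, incomplete at this spot. Your scalar-triple-product computation (dotting the equilibrium condition with $b$ to get $\alpha\|z\wedge b\|^2=0$, hence $z\in\langle b\rangle$ and then $z=\pm b/\|b\|$ on $S^2$) closes that gap and is correct; note it is essentially the same identity the paper uses elsewhere to show $V(\mu)=-\mu\cdot b$ is a strict Lyapunov function. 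The paper does contain an independent full characterization of the LL equilibria by direct computation (the ``Equilibrium points'' lemma in the section on properties of the Landau-Lifshitz equation, argued via orthogonality of $\mu\wedge b$ and $\mu\wedge(\mu\wedge b)$ rather than via the triple product), but it is not invoked here; your version combines the structural argument with the elementary one and is the more self-contained of the two.
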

 
\subsection{Stochastic perturbation of Double bracket dissipative Hamiltonian systems}

\subsubsection{Stochastic double bracket dissipative Hamiltonian systems}

Following our previous discussion on stochastic Hamiltonian systems as defined by J-M. Bismut in \cite{bismut}, we define stochastic double bracket Hamiltonian systems as follows:

\begin{definition}
Let $\{ .,.\}$ and $\{\{ .,.\}\}$ be a Poisson bracket and double bracket on a Poisson manifold $P$. Let $\mathbf{H}=\{ H_i \}_{i=0,\dots ,r}$ be a finite family of real (or complex) valued functions defined on $P$. A stochastic double bracket dissipative Hamiltonian system of Hamiltonian $\mathbf{H}$ is an equation of the form
\begin{equation}
\di\frac{dz}{dt} =\left [ X_{H_0 ,\{ .,.\}} (z) +X_{H_0 ,\{\{ .,.\}\}} (z)\right ] dt +
\left [ \di\sum_{i=1}^r  X_{H_i ,\{ .,.\}} (z) +X_{H_i ,\{\{ .,.\}\}} (z) dW_{i,t} \right ],
\end{equation}
where $X_{H_j ,\{ .,.\}} (z)$ and $X_{H_j,\{\{ .,.\}\}} (z)$ satisfy 
\begin{equation}
X_{H,\{ .,.\}} [F]=\{ F ,H\},\ \ \mbox{and}\ \ \ X_{H,\{\{ .,.\}\}} [F]=\{\{ F,H\}\},
\end{equation}
respectively for $j=0,\dots ,r$ and $(W_1 ,\dots ,W_r)$ is a $r$-dimensional Brownian motion. 
\end{definition}

As for stochastic Hamiltonian systems, one can recover the main properties of double bracket dissipative Hamiltonian systems in the stochastic case. In the following, we focus on a particular class which will be of importance in the last Part concerning the Landau-Lifshitz equation. 

\subsubsection{Stochastic perturbation of linearly dependent double bracket Hamiltonian systems}

We consider a Hamiltonian function $H(z;p):=H_p (z)$ where $z$ belongs to a Poisson manifold $(P,\{ .,.\} )$ and the parameters $p\in \R^m$. Let $\{\{ .,.\}\}$ be a double bracket on $P$. We assume moreover that $H$ is linear with respect to $p$. We consider the double bracket dissipative  Hamiltonian equation 
\begin{equation}
\dot{z} =X_{H_p ,\{ .,.\}} (z)  )+ X_{H_p ,\{\{ .,.\}\}} (z).
\end{equation}
The external stochastisation procedure lead to a family of stochastic Hamiltonian systems of the form 
\begin{equation}
\label{externalhamiltonian2}
dZ_t = \left [ X_{H_p ,\{ .,.\}} (Z_t) + X_{H_p ,\{\{ .,.\}\}} (Z_t ) \right ] dt + \left [ X_{H_{\sigma (Z_t ,t) \star dB_t} ,\{ .,.\}} (Z_t) +X_{H_{\sigma (Z_t ,t) \star dB_t} ,\{\{ .,.\}\}} (z) \right ] ,
\end{equation}
where $\sigma (Z_t ,t)$ is an $m \times k$ matrix and $dB_t$ is a $k$ dimensional Brownian motion, the symbol $\star$ is zero in the It\^o case and $\circ$ in the Stratonovich case.\\

This formula looks like the definition of stochastic double bracket Hamiltonian system that we have just defined. The two construction coincide when we restrict our attention to a specific form of Hamiltonian systems:

\begin{lemma}
\label{lemmadoublebracketstochastic}
Assume that $\mathbb{H} :\R^{2n} \times \R^m$ is a linear Hamiltonian system of the form 
\begin{equation}
\mathbb{H}(z;p)=\di\sum_{i=1}^m p_i H_i (z) ,
\end{equation}
where $H_i :\R^{2n} \rightarrow \R$, $i=1,\dots ,m$. Let us consider the Hamiltonian equation associated to $\mathbb{H}$ in canonical coordinates. An external perturbation of this equation under a noise with a drift given by a constant $m\times m$ diagonal matrix with eigenvalues $(\sigma_1 ,\dots ,\sigma_m )$, leads to a stochastic differential equation of the form
\begin{equation}
dZ_t =\left [ X_{H_p ,\{ .,.\}} (Z_t) + X_{H_p ,\{\{ .,.\}\}} (Z_t ) \right ] dt + \di\sum_{i=1}^r \left [ X_{\sigma_i H_i ,\{ .,.\}} (Z_t) +X_{\sigma_i H_i ,\{\{ .,.\}\}} (z) \right ] dB_{i,t} ,
\end{equation}
i.e. a stochastic double bracket dissipative Hamiltonian system.
\end{lemma}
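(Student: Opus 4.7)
The plan is to take the general external stochastisation formula (\ref{externalhamiltonian2}) and unfold its right-hand side under the two structural assumptions of the lemma: the linearity of $\mathbb{H}$ in $p$ and the diagonal form of the diffusion matrix $\sigma$. The whole argument is essentially two applications of linearity, once for the Hamiltonian vector field operator and once for the bracket-dependent vector field constructions, followed by a direct identification with the definition of a stochastic double bracket dissipative Hamiltonian system.

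First I would record the linearity that is inherited from $\mathbb{H}(z;p)=\sum_{i=1}^m p_i H_i(z)$. Because $\{F,H\}$ and $\{\{F,H\}\}$ are both linear in $H$ for fixed $F$, the assignments $H\mapsto X_{H,\{\cdot,\cdot\}}$ and $H\mapsto X_{H,\{\{\cdot,\cdot\}\}}$ are linear operators on $\mathcal{F}(P)$. Consequently, for any $q\in\R^m$,
\begin{equation*}
X_{\mathbb{H}_q,\{.,.\}}(z)=\sum_{i=1}^m q_i\,X_{H_i,\{.,.\}}(z),
\qquad
X_{\mathbb{H}_q,\{\{.,.\}\}}(z)=\sum_{i=1}^m q_i\,X_{H_i,\{\{.,.\}\}}(z).
\end{equation*}
This is what makes both the drift and the dissipative term split componentwise along the parameters.

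Second I would translate the special form of the noise. Taking $k=m$ and $\sigma=\mathrm{diag}(\sigma_1,\dots,\sigma_m)$ in (\ref{externalhamiltonian2}), the symbolic parameter vector is $\sigma(Z_t,t)\star dB_t=(\sigma_1\,dB_{1,t},\dots,\sigma_m\,dB_{m,t})^T$, so that $\mathbb{H}_{\sigma(Z_t,t)\star dB_t}(z)=\sum_{i=1}^m \sigma_i H_i(z)\,dB_{i,t}$. Applying the first step to each coefficient $\sigma_i\,dB_{i,t}$ and collecting the drift and dissipative contributions yields
\begin{equation*}
dZ_t=\bigl[X_{\mathbb{H}_p,\{.,.\}}(Z_t)+X_{\mathbb{H}_p,\{\{.,.\}\}}(Z_t)\bigr]\,dt+\sum_{i=1}^m\bigl[X_{\sigma_i H_i,\{.,.\}}(Z_t)+X_{\sigma_i H_i,\{\{.,.\}\}}(Z_t)\bigr]\,dB_{i,t},
\end{equation*}
which, after setting $r=m$, is exactly the form prescribed by the definition of a stochastic double bracket dissipative Hamiltonian system with Hamiltonian family $\mathbf{H}=\{\mathbb{H}_p,\sigma_1 H_1,\dots,\sigma_m H_m\}$.

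The argument is essentially bookkeeping once the linearity is in hand, so I do not expect a serious obstacle. The only delicate point is to make sure that the formal manipulation $X_{H_{\sigma\star dB_t},\cdot}=\sum_i \sigma_i\,X_{H_i,\cdot}\,dB_{i,t}$ is legitimate; this is harmless in the Stratonovich case because the chain rule is classical, and in the It\^o case it is justified because $H\mapsto X_H$ is pointwise linear in $H$ so one can commute the summation with the stochastic differential without producing additional quadratic-variation corrections (the correction, if any, is absorbed in the drift through the Wong--Zakai conversion, but the diagonal constant $\sigma$ makes such corrections trivial). Hence the proof reduces to the two displayed computations above.
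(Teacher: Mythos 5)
Your proof is correct and matches the paper's (implicit) argument: the paper states this lemma without proof, as a direct consequence of equation (\ref{externalhamiltonian2}) together with the linearity of $H\mapsto X_{H,\{ .,.\}}$ and $H\mapsto X_{H,\{\{ .,.\}\}}$ and the diagonal form of $\sigma$, which is exactly the two-step computation you carry out (with $r=m$). One minor caveat: your closing parenthetical claiming the Wong--Zakai correction is trivial because $\sigma$ is constant and diagonal is inaccurate --- the actual diffusion coefficient of the resulting SDE is $\di\sum_{i=1}^m \sigma_i \left [ X_{H_i ,\{ .,.\}}(z)+X_{H_i ,\{\{ .,.\}\}}(z)\right ]$, which is $z$-dependent, so the It\^o/Stratonovich conversion does produce a nontrivial drift correction --- but this does not affect the lemma, which is a purely formal identification of the equation's shape under each interpretation of $\star$, and it is precisely why the paper only claims the Bismut-type reading in the Stratonovich case.
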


We recognize the definition of a stochastic double bracket dissipative Hamiltonian system in the Stratonovich case.

\subsubsection{A stochastic double bracket Landau-Lifshitz equation}

In Section \ref{landau}, a stochastic version of the classical Landau-Lifshitz equation is discussed. A possibility is given by Etore and al. in \cite{Etore}, where an external perturbation is used on the effective magnetic field and leads to the following equation
\begin{equation}
\label{stochasticll}
d \mu_t = [ -\mu_t \wedge b- \alpha \mu_{t}\wedge ( \mu_{t}\wedge b )] dt+ \varepsilon [ -\mu_{t}\wedge \star dW_{t}- \alpha \mu_t \wedge \mu_t \wedge \star dW_t ] ,
\tag{ELL}
\end{equation}
where the symbol $\star$ is omitted in the It\^o case and is replaced by $\circ$ in the Stratonovich case.\\

As already proved in Section \ref{landaulifshitzdoublebracket}, the deterministic part is associated to the Hamiltonian function $H(z)=z.b$ and the Poisson bracket $\{ .,.\}$ and double bracket $\{\{ .,.\}\}$ on $\R^3$ defined in equation (\ref{bracketlarmor}) and (\ref{doublebracketll}) respectively.\\

The previous double bracket Hamiltonian structure is preserved in the stochastic case:

\begin{lemma}
The stochastic Landau-Lifshitz equation (\ref{stochasticll}) is a stochastic double bracket dissipative Hamiltonian system with Hamiltonian $H_0 (z)=z.b$ and $H_i (z)=z.e_i$ for $i=1,2,3$ where $\{ e_i\}_{i=1,2,3}$ is the canonical base of $\R^3$.
\end{lemma}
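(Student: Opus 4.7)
The plan is to reduce the claim to a direct application of Lemma \ref{lemmadoublebracketstochastic} together with the double bracket formulation of the deterministic Landau-Lifshitz equation already established in Section \ref{landaulifshitzdoublebracket}. The key observation is that the Hamiltonian $H_0(z) = z\cdot b$ is linear in the parameter $b\in\R^3$: writing $b = \sum_{i=1}^3 b_i e_i$ in the canonical basis, we have $H_0(z)=\sum_{i=1}^3 b_i H_i(z)$ with $H_i(z)=z\cdot e_i$. This puts us exactly in the hypothesis of Lemma \ref{lemmadoublebracketstochastic}, so it remains only to identify the diffusion matrix and verify that the noise part of (ELL) coincides with the resulting sum.

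First, I would recall from Section \ref{landaulifshitzdoublebracket} that the deterministic drift $-\mu\wedge b - \alpha\mu\wedge(\mu\wedge b)$ is precisely $X_{H_0,\{\cdot,\cdot\}}(z) + X_{H_0,\{\{\cdot,\cdot\}\}}(z)$ for the Poisson bracket $\{\cdot,\cdot\}$ from (\ref{bracketlarmor}) (with the sign convention making the precession term match (ELL)) and the double bracket $\{\{\cdot,\cdot\}\}$ from (\ref{doublebracketll}). This settles the $dt$ part of the identification.

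Next, for the diffusion part, I would apply the external stochastisation procedure of Lemma \ref{lemmadoublebracketstochastic} to the parameter $b$, taking the diagonal diffusion matrix $\sigma = \varepsilon\,\mathbb{I}_3$. Using Lemma \ref{hamlarmorpoisson} applied to each $H_i$ gives $X_{H_i,\{\cdot,\cdot\}}(z) = \pm z\wedge e_i$, and repeating the computation of Section \ref{landaulifshitzdoublebracket} with $e_i$ in place of $b$ gives $X_{H_i,\{\{\cdot,\cdot\}\}}(z) = -\alpha z\wedge(z\wedge e_i)$. Summing over $i$ against $dB_{i,t}$ and pulling out the sum using bilinearity of $\wedge$,
\[
\sum_{i=1}^3 \varepsilon X_{H_i,\{\cdot,\cdot\}}(z)\,\star dB_{i,t} = \pm\varepsilon\, z \wedge \Bigl(\sum_{i=1}^3 e_i\,\star dB_{i,t}\Bigr) = \pm\varepsilon\,z\wedge \star dW_t,
\]
and similarly $\sum_i \varepsilon X_{H_i,\{\{\cdot,\cdot\}\}}(z)\,\star dB_{i,t} = -\alpha\varepsilon\, z\wedge(z\wedge \star dW_t)$. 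Together these reconstitute exactly the stochastic part of (ELL).

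The only real subtlety — and what I expect to be the main point of care rather than a genuine obstacle — is keeping track of the sign of the Poisson bracket $\{\cdot,\cdot\}_{\pm}$ so that the precession term $-\mu\wedge b$ of (ELL) is reproduced (rather than $+\mu\wedge b$ of the Larmor equation in Lemma \ref{poissonlarmorlemma}), and checking that the very same sign choice is then consistent with the diffusion terms; both the drift and the three noise vector fields are linear in their Hamiltonian, so one choice of sign uniformly propagates through. Once this is fixed, the identification is a one-line verification after invoking Lemma \ref{lemmadoublebracketstochastic}.
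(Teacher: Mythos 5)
Your proposal is correct and follows essentially the same route as the paper: the paper's own proof is exactly the computation you describe, namely expanding the noise over the canonical basis, computing $X_{H_i,\{\{.,.\}\}}(z)=-\alpha z\wedge(z\wedge e_i)$ (and the corresponding Poisson part $\mp z\wedge e_i$), and resumming by bilinearity of the wedge product to recover $-\varepsilon\mu\wedge\star dW_t-\alpha\varepsilon\,\mu\wedge(\mu\wedge\star dW_t)$. Your packaging of this via Lemma \ref{lemmadoublebracketstochastic} with $\sigma=\varepsilon\,\mathbb{I}_3$ is a mild reorganization rather than a different argument, and your explicit attention to the sign of $\{.,.\}_{\pm}$ is a welcome precision the paper glosses over.
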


\begin{proof}
We have only to write the stochastic part as a sum of Poisson or double bracket Hamiltonian vector field. If $H(z)=z.b$, then 
$X_{H,\{\{ .,.\}\}} =-\alpha z\wedge (z\wedge b)$. As a consequence, the term associated to the Hamiltonian $H_i (z)=z.e_i$ is given by $-\alpha z\wedge (z\wedge e_i)$. Writing 
\begin{equation}
\left .
\begin{array}{lll}
\di\sum_{i=1}^3 X_{H_i,\{\{ .,.\}\}} dB_{i,t} & = & -\di\sum_{i=1}^3 \alpha z\wedge (z\wedge e_i)dB_{i,t} ,\\
 & = & -\alpha z\wedge (z\wedge dB_t ) ,
\end{array}
\right .
\end{equation}
we deduce that equation (\ref{stochasticll}) can be written as
\begin{equation}
d\mu_t = X_{H_0 ,\{ .,.\}} +X_{H_0 ,\{\{ .,.\}\}} +\di\sum_{i=1}^3 \left [ X_{H_i ,\{ .,.\}} +X_{H_i,\{\{ .,.\}\}} \right ] dB_{i,t} .
\end{equation}
This concludes the proof.
\end{proof}

\section{Application: stochastic Landau-Lifshitz equations}
\label{landau}

In this part, we derive several models of a stochastic Landau-Lifshitz equation following different approaches to the modeling of the stochastic behavior of the effective magnetic field. We first remind the construction of the classical Landau-Lifshitz equation and then its main properties. We then review classical stochastic approach used by different authors and the difficulties associated with these models. We finally propose a new model based on the RODE  stochastisation process and under some selection principles. 

\subsection{The Landau-Lifshitz equation}

The Landau-Lifshitz equation is a generalization of the classical Larmor equation. The Larmor equation is {\bf conservative}. However, {\bf dissipative processes} take place within dynamic {\bf magnetization processes}. The microscopic nature of this dissipation is still not clear and is currently the focus of considerable research \cite{arrott,bertotti2}. The approach followed by Landau and Lifshitz consists of introducing dissipation in a phenomenological way. They introduce an additional torque term that pushes magnetization in the direction of the effective field. The Landau-Lifshitz equation becomes
\begin{equation}
\label{LLg2}
\frac{d\mu}{dt}=-\mu\wedge b -\alpha\mu \wedge(\mu \wedge b),
\tag{LLg}
\end{equation} 
where $\mu \in \R^3$ is the single magnetic moment, $\wedge$ is the vector cross product in $\mathbb{R}^3$, $b$ is the effective field and $\alpha >0$ is the damping effects.\\

As for the Larmor equation, this equation possess many particular properties which can be used to derive a stochastic analogue. We review some of them in the next Section.
 
\subsection{Properties of the Landau-Lifshitz equation}

In this Section, we give a {\bf self-contained} presentation of some classical features of the LL equation concerning equilibrium points, their stability and the asymptotic behavior of the solutions. In particular, we prove that the LL equation is an example of a {\bf gradient-like} dynamical systems on the sphere $S^2$. Classical properties of these systems on compact metric spaces explain the qualitative behavior of the solutions. Readers which are familiar with the LL equation can switch this Section.  

\subsubsection{Invariance}

The following result is fundamental is all the stochastic generalization of the LL equation.

\begin{lemma}
\label{invarianceLL}
Let $\mu (0) \in S^2$, then the solution $\mu_t$ satisfies for all $t\in \R$, $\parallel\mu_t \parallel =1$, i.e. the sphere $S^2$ is invariant under the flow of the LL equation.
\end{lemma}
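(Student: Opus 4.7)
The plan is to verify directly that the squared norm $F(\mu) = \parallel \mu \parallel^2$ is a first integral of the deterministic LL equation, from which the sphere $S^2 = \{\mu \in \R^3 : F(\mu) = 1\}$ is immediately seen to be an invariant submanifold. First I would differentiate $F(\mu_t)$ along a solution, obtaining by the chain rule
\begin{equation*}
\di\frac{d}{dt} \parallel \mu_t \parallel^2 = 2 \mu_t \cdot \di\frac{d\mu_t}{dt}.
\end{equation*}
Then I would substitute the right-hand side of (\ref{LLg2}) and apply the elementary identity $u \cdot (u \wedge v) = 0$, valid for any $u,v \in \R^3$, to both the precession term $-\mu_t \cdot (\mu_t \wedge b)$ and the damping term $-\alpha\, \mu_t \cdot [\mu_t \wedge (\mu_t \wedge b)]$. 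Each vanishes because the cross product is orthogonal to its first factor.

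Alternatively, one can invoke the invariance criterion already used in the proof of Lemma \ref{larmorinvariance}: a level manifold $M = \{F = 0\}$ is flow-invariant if and only if $\nabla F(x) \cdot f(x) = 0$ on $M$. Applied to $G(\mu) = \parallel \mu \parallel^2 - 1$ with $\nabla G(\mu) = 2\mu$ and the LL vector field $f(\mu) = -\mu \wedge b - \alpha\, \mu \wedge (\mu \wedge b)$, the same two orthogonality identities give $\nabla G \cdot f \equiv 0$ on $S^2$.

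I do not expect any real obstacle here: the proof is a one-line computation. The only conceptual point worth underlining is that the dissipative term, although it is responsible for the energy decay along $S^2$ and lies in the tangent plane to the sphere, is still of the form $\mu \wedge (\,\cdot\,)$, which is exactly why the phenomenological damping introduced by Landau and Lifshitz preserves the magnetization norm. This observation is what distinguishes LL damping from a generic dissipative perturbation of the Larmor equation, and it is the structural reason that the stochastic generalizations discussed in the sequel must be built around this cross-product shape if the invariance of $S^2$ is to be preserved.
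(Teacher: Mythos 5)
Your proof is correct and follows essentially the same route as the paper: differentiate $\parallel\mu_t\parallel^2$ along a solution, substitute the LL vector field, and observe that both the precession and damping terms are orthogonal to $\mu_t$ since each is of the form $\mu_t\wedge(\,\cdot\,)$. The paper's argument is exactly this one-line computation, so no further comparison is needed.
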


We give the proof for the convenience of the reader.

\begin{proof}
Let $\mu_t$ be a solution of the LL equation. We have 
\begin{equation}
\left .
\begin{array}{lll}
\di\frac{d}{dt} \left [ \mu_t . \mu_t \right ] & = & 2 \mu_t .\di \frac{d\mu_t }{dt} ,\\
 & = & \mu_t . \left [ -\mu_t \wedge b -\alpha\mu_t \wedge(\mu_t \wedge b) \right ] .
\end{array}
\right .
\end{equation}
By definition of the wedge product, the vectors $\mu_t\wedge b$ and  $\alpha\mu_t \wedge(\mu_t \wedge b)$ are orthogonal to $\mu_t$ so that 
\begin{equation} 
\di\frac{d}{dt} \left [ \mu_t . \mu_t \right ] =0 .
\end{equation}
As a consequence, using the fact that $\mu_0 \in S^2$, we deduce that 
\begin{equation}
\parallel \mu_t \parallel =\parallel \mu_0 \parallel =1 ,
\end{equation}
which concludes the proof.
\end{proof}

As a consequence, a solution starting on the sphere $S^2$ will remains always on it. The sphere being a two dimensional compact manifold, we can use classical result to deduce the asymptotic behavior of the solutions. But first, let us compute the equilibrium points.

\subsubsection{Equilibrium points}

The equilibrium points of the LL equation are easily obtained.

\begin{lemma}
The LL equation possesses as equilibrium points $b/\parallel b\parallel$ and $-b/\parallel b\parallel$. 
\end{lemma}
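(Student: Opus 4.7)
The plan is to solve $-\mu\wedge b-\alpha\,\mu\wedge(\mu\wedge b)=0$ on $S^2$ by exploiting the orthogonal decomposition of the two terms. First I would use the double cross product identity $\mu\wedge(\mu\wedge b)=(\mu\cdot b)\mu-\|\mu\|^2 b$, which, on $S^2$, simplifies to $(\mu\cdot b)\mu-b$. The equilibrium condition then reads
\begin{equation}
-\mu\wedge b-\alpha\bigl[(\mu\cdot b)\mu-b\bigr]=0.
\end{equation}

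Next I would observe that the vector $\mu\wedge b$ is orthogonal to the plane $\mathrm{span}(\mu,b)$, while $(\mu\cdot b)\mu-b$ lies in that plane. Hence the two contributions must vanish separately (unless $\mu$ and $b$ are colinear, in which case both already vanish). The condition $\mu\wedge b=0$ forces $\mu$ to be colinear with $b$, and imposing $\|\mu\|=1$ yields $\mu=\pm b/\|b\|$.

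Finally I would verify directly that these two points are indeed equilibria: for $\mu=\pm b/\|b\|$, one has $\mu\wedge b=0$, and therefore also $\mu\wedge(\mu\wedge b)=0$, so the right-hand side of \eqref{LLg2} vanishes. This also confirms that the invariance of $S^2$ (Lemma \ref{invarianceLL}) makes the restriction of the flow to $S^2$ well defined, so it is legitimate to speak of equilibrium points of the LL equation on $S^2$.

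There is no real obstacle here; the only subtlety is recognizing the orthogonality of the conservative part $-\mu\wedge b$ and the dissipative part $-\alpha\,\mu\wedge(\mu\wedge b)$ at an arbitrary point of $S^2$, which is what cleanly reduces the vector equation to the scalar condition $\mu\wedge b=0$. Alternatively, one could invoke Proposition \ref{bracketequilibrium} together with the double bracket Hamiltonian description of the LL equation established in Section \ref{landaulifshitzdoublebracket}, which immediately transfers the equilibria of the Larmor equation on $S^2$ (already identified as $\pm b/\|b\|$ in Section \ref{larmor}) to the LL equation.
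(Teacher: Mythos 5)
Your proof is correct and rests on the same key observation as the paper's: the precession term $-\mu\wedge b$ and the damping term $-\alpha\,\mu\wedge(\mu\wedge b)$ are mutually orthogonal, so their sum vanishes only when $\mu\wedge b=0$, which on $S^2$ gives $\mu=\pm b/\parallel b\parallel$. The paper phrases this directly via the orthogonality of $\mu\wedge(\mu\wedge b)$ to $\mu\wedge b$ rather than through the triple-product expansion, but the argument is essentially identical, and your closing remark about Proposition \ref{bracketequilibrium} matches the corollary the paper itself draws in Section \ref{landaulifshitzdoublebracket}.
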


We give the proof for the convenience of the reader.

\begin{proof}
An equilibrium point $\mu \in \R^3$ satisfies 
\begin{equation}
-\mu\wedge b -\alpha\mu \wedge(\mu \wedge b)=0,
\end{equation}
which gives 
\begin{equation}
-\mu\wedge b =\alpha\mu \wedge(\mu \wedge b) .
\end{equation}
The vector $\mu$ must be orthogonal to $\mu \wedge b$ and at the same time equal to $-\mu \wedge b$ up to a factor $\alpha >0$. As $\mu_0 \in S^2$, we have $\mu\not= 0$ and the only solution is 
\begin{equation}
\mu \wedge b =0 .
\end{equation}
We then obtain $\mu =\lambda b$, with $\lambda \in \R$. By Lemma \ref{invarianceLL}, we must have $\mu \in S^2$ so that $\lambda = \pm 1/\parallel b\parallel$. This concludes the proof.
\end{proof}

We see that the equilibrium point of the LL equation coincide with those of the Larmor equation.\\

The stability of the previous equilibrium point can be easily studied using the  Lyapunov theory.

\subsubsection{Lyapunov function and stability of the equilibrium points}

A common way to study the stability of equilibrium points is to find a  Lyapunov function. The LL equation possesses a global  Lyapunov function and as a consequence is an example of a {\bf gradient-like dynamical system}.

\begin{lemma}
The function $V(\mu )=-\mu . b$ is a strict  Lyapunov function for the LL equation.
\end{lemma}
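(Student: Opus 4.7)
The plan is to verify the two defining features of a strict Lyapunov function for the equilibrium $b/\|b\|$ of the Landau-Lifshitz equation on the invariant sphere $S^2$: (i) $V$ (shifted by its minimum value $-\|b\|$) is positive definite around this equilibrium, and (ii) the total derivative $\dot V$ along solutions is strictly negative away from the equilibria.

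For (i), I would simply note that on $S^2$ the Cauchy--Schwarz inequality gives $\mu \cdot b \leq \|b\|$ with equality if and only if $\mu = b/\|b\|$, so the function $\tilde V(\mu) = V(\mu) + \|b\| = -\mu\cdot b + \|b\|$ vanishes only at $b/\|b\|$ and is strictly positive elsewhere on $S^2$.

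For (ii), I would differentiate along a trajectory:
\begin{equation*}
\dot V(\mu_t) = -\dot\mu_t \cdot b = (\mu_t \wedge b)\cdot b + \alpha\bigl[\mu_t \wedge (\mu_t \wedge b)\bigr]\cdot b.
\end{equation*}
The first summand vanishes because $\mu_t \wedge b$ is orthogonal to $b$. For the second, applying the BAC--CAB identity $u \wedge (v \wedge w) = v(u\cdot w) - w(u\cdot v)$ together with the constraint $\|\mu_t\|=1$ from Lemma \ref{invarianceLL} yields $\mu_t\wedge(\mu_t\wedge b) = (\mu_t\cdot b)\mu_t - b$, so that
\begin{equation*}
\dot V(\mu_t) = \alpha\bigl[(\mu_t\cdot b)^2 - \|b\|^2\bigr].
\end{equation*}
By Cauchy--Schwarz on $S^2$, this quantity is nonpositive and vanishes exactly when $\mu_t = \pm b/\|b\|$, i.e.\ at the two equilibrium points. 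Since $\alpha > 0$, the function $V$ decreases strictly along any nontrivial trajectory, which is the strict Lyapunov property.

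There is no real obstacle here: the computation is short once the BAC--CAB identity and the invariance $\|\mu_t\|=1$ are used. The only subtlety worth flagging is that $V$ is a \emph{global} Lyapunov function on $S^2$ in the sense that $\dot V < 0$ everywhere except at the two equilibria; it certifies asymptotic stability of $b/\|b\|$ and, by symmetry (or by noting that $\dot V \leq 0$ forbids trajectories from converging to the maximum of $V$ except the trivial one), instability of $-b/\|b\|$. This gradient-like structure on $S^2$ then sets up the asymptotic analysis announced in the preceding paragraph of the section.
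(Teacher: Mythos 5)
Your proof is correct and follows essentially the same route as the paper: both compute $\dot V(\mu_t) = -\dot\mu_t\cdot b$, kill the precession term by orthogonality, and show the damping term is strictly negative off the equilibria. The only cosmetic difference is that the paper uses the mixed-product identity to get $\dot V = -\alpha\|\mu_t\wedge b\|^2$ directly (valid without invoking $\|\mu_t\|=1$), whereas your BAC--CAB computation uses the sphere constraint and lands on the equivalent expression $\alpha[(\mu_t\cdot b)^2-\|b\|^2]$, since $\|\mu\wedge b\|^2=\|b\|^2-(\mu\cdot b)^2$ on $S^2$.
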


\begin{proof}
This is a simple computation. We have 
\begin{equation}
\left .
\begin{array}{lll}
\di\frac{d}{dt} \left [ V(\mu_t ) \right ] & = & - \di\frac{d\mu_t}{dt} . b ,\\
 & = & \left [ \mu_t \wedge b + \alpha\mu_t \wedge(\mu_t \wedge b) \right ] . b ,\\
 & = &  - \alpha \parallel \mu_t \wedge b \parallel^2 , 
\end{array}
\right .
\end{equation}
using the property of the {\it mixed product} of vectors $u,v,w$ defined by $[u,v,w]=(u\wedge v).w$ that $[u,v,w]=-[u,w,v]$.\\

As the damping coefficient $\alpha >0$, we deduce that $\di\frac{d}{dt} \left [ V(\mu_t ) \right ]<0$ except at the equilibrium points. This concludes the proof.
\end{proof}

\subsubsection{Asymptotic behavior}

The classical {\bf Poincar\'e-Bendixon Theorem} can be formulated for the sphere $S^2$ (see \cite{palis},p.18): ;

\begin{theorem}[Poincar\'e-Bendixon] Let $X$ be a vector field on the sphere $S^2$ with a finite number of singularities. Take $p\in S^2$ and let $\omega (p)$ be the $\omega$-limit set of $p$. Then one of the following possibilities holds:
\begin{itemize}
\item $\omega (p)$ is a singularity;
\item $\omega (p)$ is a closed orbit;
\item $\omega (p)$ consists of singularities $p_1$,$\dots$,$p_n$ and regular orbits such that if $\gamma \subset \omega (p)$ then $\alpha (\gamma )=p_i$ and $\omega (\gamma )=p_j$.
\end{itemize}
\end{theorem}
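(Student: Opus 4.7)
The plan is to adapt the classical planar Poincaré-Bendixson argument to $S^2$, exploiting the fact that $S^2$ satisfies the Jordan curve theorem (any simple closed curve separates $S^2$ into two open disks). First I would recall the standard structural facts about $\omega$-limit sets in a compact space: $\omega(p)$ is non-empty, closed, connected, and invariant under the flow of $X$. Since the singularity set $\mathrm{Sing}(X)$ is finite by hypothesis, I would split the argument according to whether $\omega(p)\cap\mathrm{Sing}(X)$ is the whole of $\omega(p)$, or $\omega(p)$ contains at least one regular point.

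If $\omega(p)\subset\mathrm{Sing}(X)$, then $\omega(p)$ is a connected subset of a finite set of isolated points, hence a single singularity; this yields the first bullet. Otherwise, pick a regular point $q\in\omega(p)$ and consider the trajectory $\gamma_q$ of $q$. I would use the local flow-box theorem to build a small transversal arc $\Sigma$ through $q$. The core lemma to establish next is the \emph{monotone transversal lemma}: the successive intersections of a single orbit with $\Sigma$ form a monotone sequence on $\Sigma$. On $S^2$, this is proved by closing up two consecutive intersections by a piece of orbit and a piece of $\Sigma$ into a simple closed curve; by the Jordan curve theorem on $S^2$, this curve bounds two disks, and the direction of the vector field along $\Sigma$ forces any later intersection to lie on one specified side of the previous one.

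From the monotone transversal lemma I would derive two consequences. First, any orbit meets a given transversal at most once inside its $\omega$-limit set, so if $q\in\omega(p)$ and $q\in\omega(q)$ (i.e.\ $q$ is recurrent), then the trajectory of $q$ is a closed orbit $\Gamma$. In that case I would argue that $\omega(p)=\Gamma$: otherwise, taking a point of $\omega(p)\setminus\Gamma$ and a transversal to $\Gamma$ nearby, the orbit of $p$ would have to accumulate on $\Gamma$ from the outside and still visit the other component, contradicting monotonicity. This yields the second bullet. Second, if the orbit $\gamma_q$ is not periodic, then $\omega(q)\subset\omega(p)$ must consist only of singularities by the same monotonicity argument (a regular point in $\omega(q)$ would force $\gamma_q$ to return to its own transversal, making it periodic). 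Hence the regular orbits $\gamma\subset\omega(p)$ have both $\alpha(\gamma)$ and $\omega(\gamma)$ consisting of singularities; connectedness of $\omega(p)$ and finiteness of $\mathrm{Sing}(X)$ force each of these to be a single singularity, which gives the third bullet.

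The main obstacle is the monotone transversal lemma on $S^2$ rather than on $\R^2$. On the plane the Jordan curve argument is the textbook version, but on $S^2$ the Jordan curve still bounds two disks, yet the ``inside/outside'' labelling is no longer canonical. I would resolve this by working in the chart $S^2\setminus\{p_0\}\cong\R^2$ where $p_0$ is chosen off the closed curve formed by the orbit arc and the transversal piece; this reduces each local application of the lemma to the usual planar statement. Once the monotone transversal lemma is in place on $S^2$, the remaining combinatorial case analysis is identical to the classical one.
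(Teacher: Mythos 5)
The paper does not prove this theorem: it is quoted as a classical result with a pointer to Palis--De Melo \cite{palis}, so there is no in-paper argument to compare against. Your outline is the standard proof found in that reference --- the trichotomy driven by the monotone transversal lemma, with the Jordan curve theorem on $S^2$ supplying the separation property --- and it is correct, including your handling of the one genuinely $S^2$-specific point (the lack of a canonical ``inside'' for a Jordan curve, resolved by passing to a planar chart missing a point off the curve). No gaps worth flagging; the remaining steps (connectedness plus finiteness of the singular set forcing $\alpha(\gamma)$ and $\omega(\gamma)$ to be single singularities, and the ``at most one intersection of $\omega(p)$ with a transversal'' lemma) are filled in exactly as in the classical treatment.
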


As we have only two equilibrium points, we have the following asymptotic behavior.

\begin{lemma}
For all $\mu_0 \in S^2 \setminus \left \{ -b/\parallel b\parallel \right \}$, the associated solution $\mu_t$ satisfies 
\begin{equation}
\di\lim_{t\rightarrow +\infty} \mu_t =\di\frac{b}{\parallel b\parallel} .
\end{equation}
\end{lemma}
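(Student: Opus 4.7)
I would apply the Poincar\'e--Bendixson theorem quoted above on $S^2$ and use the strict Lyapunov function $V(\mu)=-\mu\cdot b$ to eliminate every alternative except convergence to $b/\parallel b\parallel$. Fix $\mu_0\in S^2\setminus\{-b/\parallel b\parallel\}$ and let $\mu_t$ denote the corresponding solution, which stays in $S^2$ by Lemma \ref{invarianceLL}. The flow has exactly two singularities $\pm b/\parallel b\parallel$, so the three possibilities for $\omega(\mu_0)$ are: a single equilibrium, a closed orbit, or a union of these equilibria with connecting regular orbits.

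First, the Lyapunov computation of the preceding lemma gives $\dot V(\mu_t) = -\alpha\parallel\mu_t\wedge b\parallel^2$, which is strictly negative except at the two equilibria. Consequently no nontrivial closed orbit can exist on $S^2$, since $V$ would otherwise have to return to its starting value along one full period; this discards the second alternative. For the third case, any regular orbit $\gamma\subset\omega(\mu_0)$ has both $\alpha(\gamma)$ and $\omega(\gamma)$ among the two equilibria, and the strict monotonicity of $V$ along orbits forces $V(\alpha(\gamma))>V(\omega(\gamma))$. Since $V(-b/\parallel b\parallel)=\parallel b\parallel>-\parallel b\parallel=V(b/\parallel b\parallel)$, any such connecting orbit necessarily runs from $-b/\parallel b\parallel$ to $b/\parallel b\parallel$, so the third alternative can only occur if $-b/\parallel b\parallel\in\omega(\mu_0)$.

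Now I would rule out exactly this. Because $V$ is non-increasing along $\mu_t$, one has $V(\mu_t)\leq V(\mu_0)$ for all $t\geq 0$, and by continuity $V(q)\leq V(\mu_0)$ for every $q\in\omega(\mu_0)$. By the Cauchy--Schwarz inequality applied to $\mu_0\in S^2$, the hypothesis $\mu_0\neq-b/\parallel b\parallel$ gives $V(\mu_0)<\parallel b\parallel=V(-b/\parallel b\parallel)$, so $-b/\parallel b\parallel\notin\omega(\mu_0)$. Combined with the previous paragraph, this forces $\omega(\mu_0)=\{b/\parallel b\parallel\}$, which is exactly the claimed convergence.

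The principal, though only mildly delicate, step is the exclusion of the third Poincar\'e--Bendixson alternative: one must recognize that any heteroclinic orbit in $\omega(\mu_0)$ would have to \emph{start} at $-b/\parallel b\parallel$, and then juxtapose this observation with the strict Lyapunov bound $V(\mu_t)\leq V(\mu_0)<V(-b/\parallel b\parallel)$ that holds under the assumption $\mu_0\neq-b/\parallel b\parallel$. Every other ingredient is a direct consequence of Lemma \ref{invarianceLL}, the Lyapunov identity, and the sign of the damping coefficient $\alpha>0$.
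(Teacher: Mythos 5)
Your proof is correct and follows the same route as the paper: the Poincar\'e--Bendixson theorem on $S^2$ combined with the strict Lyapunov function $V(\mu)=-\mu\cdot b$. In fact your argument is more complete than the paper's, which omits both the exclusion of the heteroclinic alternative and the explicit reason why the $\omega$-limit set cannot contain the maximizer $-b/\parallel b\parallel$ --- points you settle cleanly via the bound $V(\mu_t)\leq V(\mu_0)<\parallel b\parallel=V(-b/\parallel b\parallel)$ for $\mu_0\neq -b/\parallel b\parallel$.
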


\begin{proof}
By the Poincar\'e-Bendixon Theorem on $S^2$, the $\omega$ limit set of a given trajectory not starting in an equilibrium point is either a periodic orbit or an equilibrium point. However, as $V$ is a strictly decreasing function on the solutions, the LL system can not have periodic solutions. As a consequence, the $\omega$ limit set of a given trajectory is an equilibrium point. This point must be a minimum for $V$ so that $\mu=b/\parallel b\parallel$ is the only solution.
\end{proof}

This result shows that classical results on the LL equation are related to general properties of dynamical systems on the sphere $S^2$ and in particular to its gradient-like structure.

\subsubsection{Simulations}

The previous results can be illustrated with some numerical simulations. We take $b=(0,0,1)$, $\alpha=1$ and we consider a set of initial conditions starting in a neighborhood of the South pole. We clearly see that the solutions are going to the North pole as predicted.  

\begin{center}
\begin{figure}[!h]
\centerline{%
    \begin{tabular}{cc}
        \includegraphics[width=0.4\textwidth]{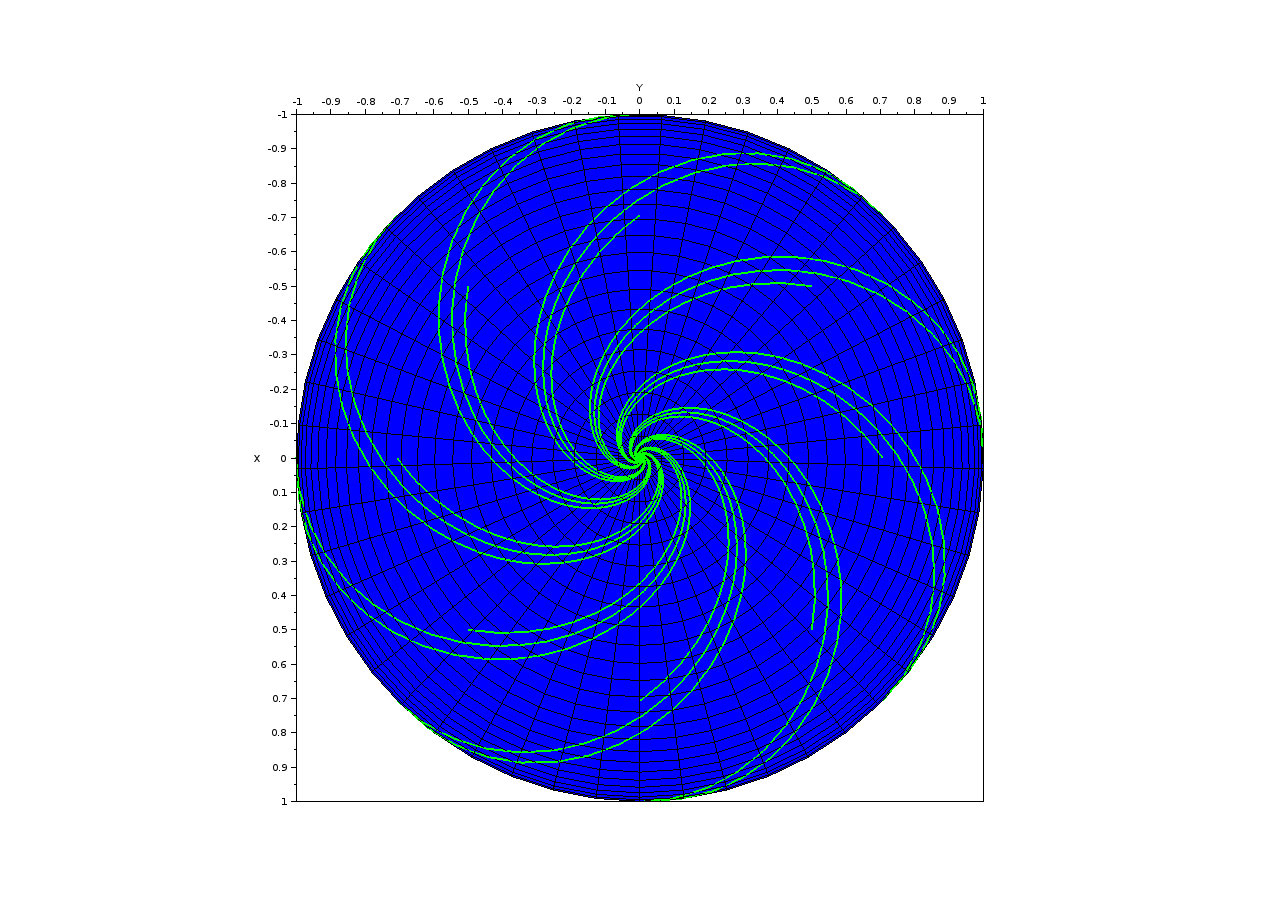} & \includegraphics[width=0.4\textwidth]{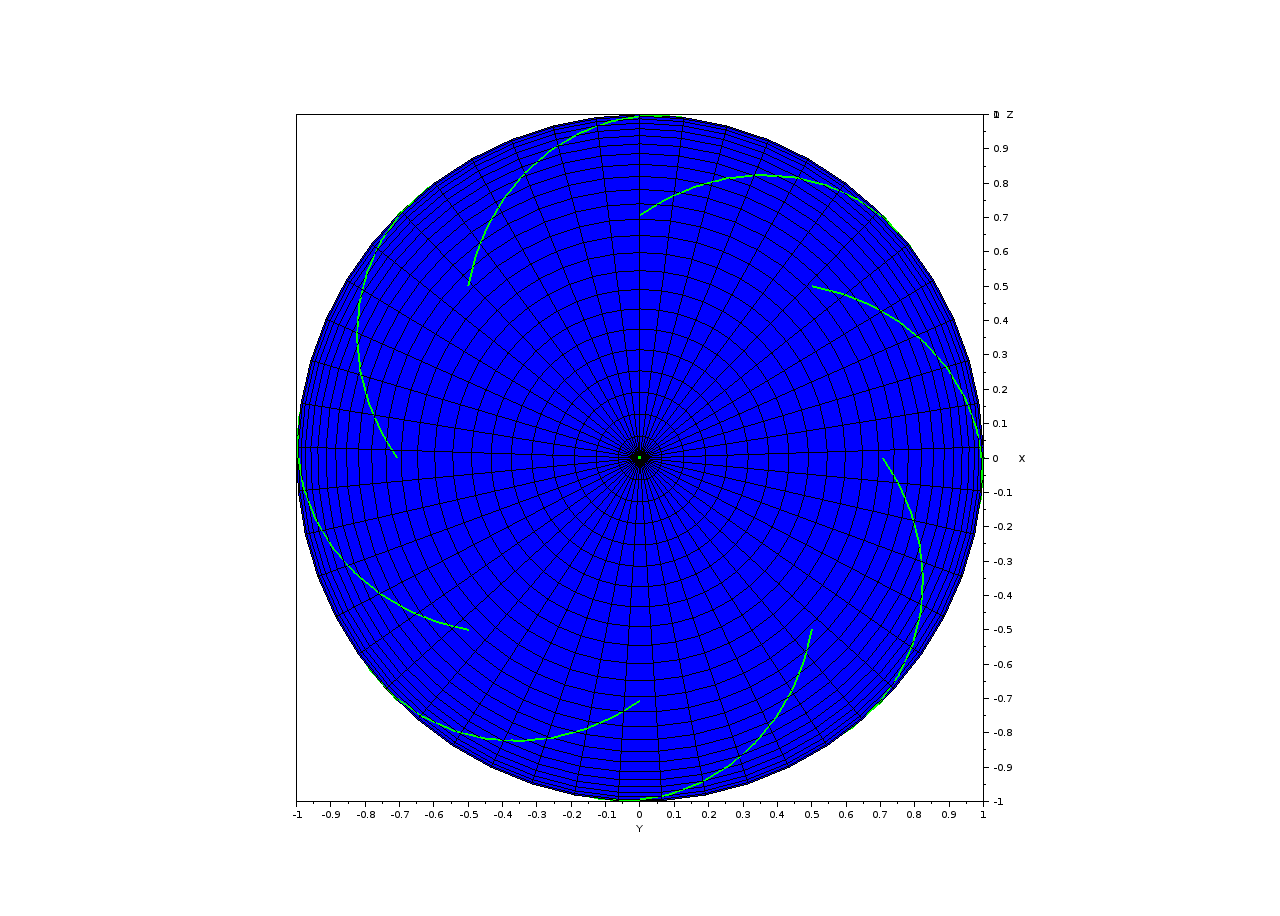}\\
        \footnotesize{(a) Dynamics near the North pole.} & \footnotesize{(b) Dynamics near the South pole}\\
    \end{tabular}}
    \caption{The deterministic Landau-Lifshitz equation}
\end{figure}
\label{ex1fig1}
\end{center}

A global view is given by :

\begin{figure}[ht!]
	\centering
	\includegraphics[width=0.5\textwidth]{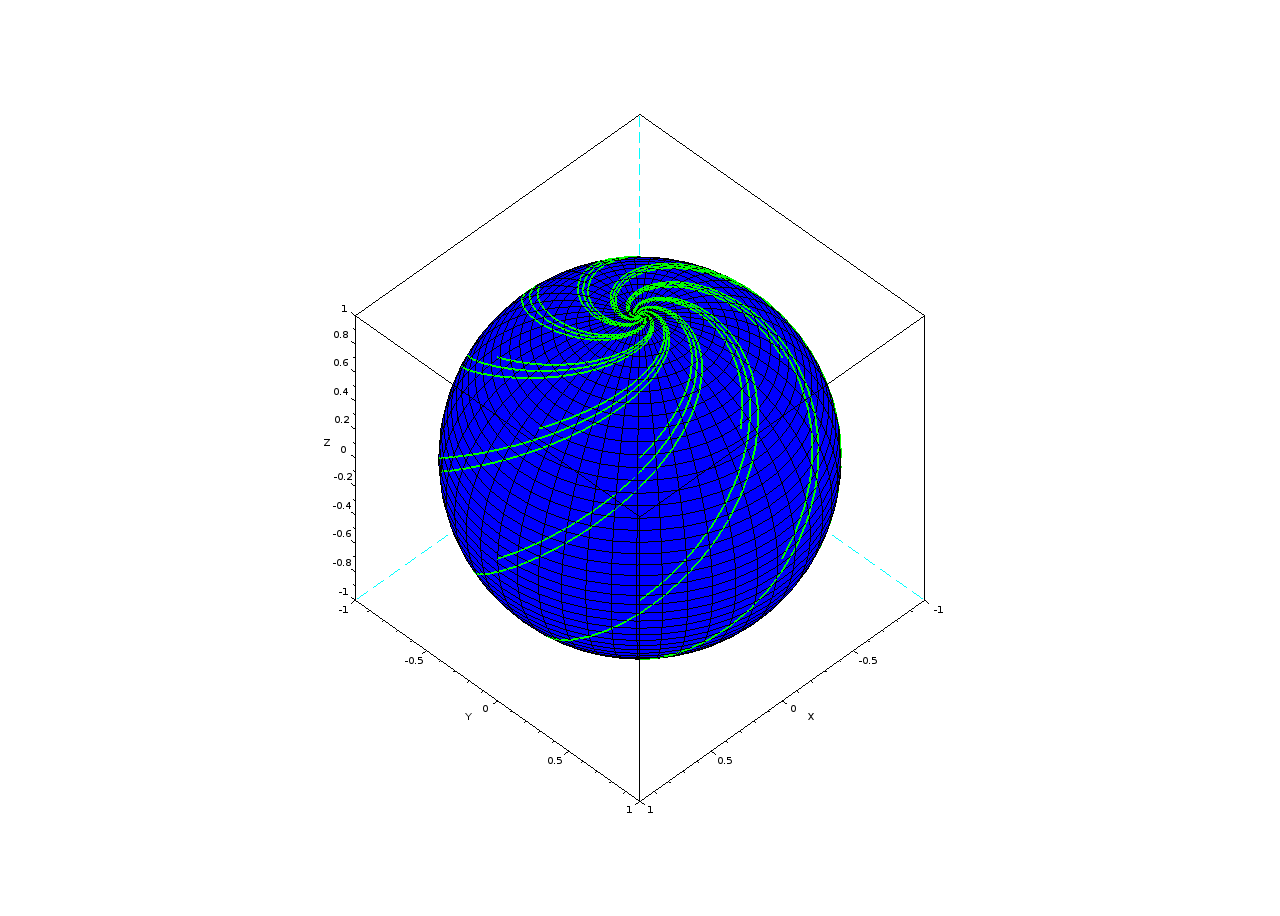}
	\caption{Global view of the dynamics}
\end{figure}

\subsection{Toward a stochastic Landau-Lifshitz equation}

In this Section, we discuss the usual way of deriving a stochastic analogue of the Landau-Lifshitz equation by considering an external perturbation of the effective magnetic field. We focus on the Stratonovich and the It\^o interpretation and we explain the strategy used in Etore and al. \cite{Etore} to bypass the obstruction that the It\^o version does not preserve the sphere $S^2$ using the invariantization method. 

\subsubsection{Classical approach to the stochastic Landau-Lifshitz equation}  

The main approach to deal with the stochastic behavior of the effective magnetic field is to use the external stochastisation procedure as exposed in Section \ref{stochastisationparameter} to obtain a stochastic differential equation in It\^o or Stratonovich sense of the following form:

\begin{equation}
\label{ELL}
d \mu_t = [ -\mu_t \wedge b- \alpha \mu_{t}\wedge ( \mu_{t}\wedge b )] dt+ \varepsilon [ -\mu_{t}\wedge \star dW_{t}- \alpha \mu_t \wedge \mu_t \wedge \star dW_t ] ,
\tag{ELL}
\end{equation}
where the symbol $\star$ is omitted in the It\^o case and is replaced by $\circ$ in the Stratonovich case.\\

Most of the authors use the Stratonovich formalism in order to give a sense to the previous equation. The main reason is that in this case, one has directly using Theorem \ref{sis}:

\begin{lemma}
The sphere $S^2$ is invariant under the flow of the Stratonovich version of equation (\ref{ELL}).
\end{lemma}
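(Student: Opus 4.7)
The plan is to apply the general Stratonovich invariance criterion that was already invoked in the proof of Lemma \ref{larmorinvariance}: a submanifold $M=\{x\in\R^n\mid F(x)=0\}$ is invariant under the flow of a Stratonovich SDE $dX_t=f(t,X_t)dt+\sigma(t,X_t)\circ dW_t$ if and only if $\nabla F(x)\cdot f(t,x)=0$ and $\nabla F(x)\cdot\sigma(t,x)=0$ for every $x\in M$ and every column of $\sigma$. For $S^2$ I would take the defining function $F(\mu)=\|\mu\|^2-1$, so that $\nabla F(\mu)=2\mu$, and the task reduces to verifying that the drift vector and each diffusion column of \eqref{ELL} are orthogonal to $\mu$ on $S^2$.

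For the drift, I would observe that $f(\mu)=-\mu\wedge b-\alpha\mu\wedge(\mu\wedge b)$ is a sum of two cross products whose first factor is $\mu$; by the defining property of the wedge product in $\R^3$ both terms are orthogonal to $\mu$, so $\mu\cdot f(\mu)=0$. For the diffusion, writing the noise in coordinates as $W_t=(W_t^1,W_t^2,W_t^3)$ with canonical basis $\{e_i\}$, the $i$-th Stratonovich column is $\sigma_i(\mu)=-\mu\wedge e_i-\alpha\mu\wedge(\mu\wedge e_i)$, and again each of these two terms is a wedge product whose first factor is $\mu$, hence orthogonal to $\mu$. Thus $\mu\cdot \sigma_i(\mu)=0$ for $i=1,2,3$.

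Having checked the two orthogonality conditions on the sphere, the cited invariance criterion gives that $S^2=\{F=0\}$ is strongly invariant under the flow of the Stratonovich version of \eqref{ELL}, which is the claim. The expected obstacle is essentially cosmetic: one must be careful that the invariance criterion is stated for a function $F$ whose gradient is non-degenerate on the manifold (which is the case here since $\nabla F(\mu)=2\mu\neq 0$ on $S^2$), and one must make explicit that the three columns of the diffusion matrix are handled separately, since the driving Brownian motion is three-dimensional. Once these points are mentioned, the proof reduces to the two one-line computations above, in direct analogy with the proof of Lemma \ref{larmorinvariance}.
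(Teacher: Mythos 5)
Your proof is correct and follows essentially the same route as the paper: the authors likewise invoke the Stratonovich invariance criterion for the sphere (Corollary \ref{spheres}) and verify that both the drift $-\mu\wedge b-\alpha\mu\wedge(\mu\wedge b)$ and the diffusion term $-\mu\wedge\cdot-\alpha\mu\wedge(\mu\wedge\cdot)$ are orthogonal to $\mu$, each being a sum of cross products with first factor $\mu$. Your added remarks about the non-degeneracy of $\nabla F$ and the column-by-column treatment of the three-dimensional noise are harmless refinements of the same argument.
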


\begin{proof}
We use Corollary \ref{spheres}. We have directly that 
\begin{equation}
\mu \cdot [ -\mu_t \wedge b- \alpha \mu_{t}\wedge ( \mu_{t}\wedge b )] =0 ,
\end{equation}
and 
\begin{equation}
\mu \cdot [ -\mu_{t}\wedge dW_{t}- \alpha \mu_t \wedge \mu_t \wedge dW_t ] =0 ,
\end{equation}
which concludes the proof.
\end{proof}

However, as pointed out by Etore and al. in \cite{Etore}, the Stratonovich version of the Landau-Lifshitz equation leads to several difficulties: 
\begin{itemize}
\item The stability of the equilibrium points of the deterministic LL equation is lost. 
\end{itemize}

The previous point has motivated the work \cite{Etore} in which the authors discuss the It\^o case. However, the It\^o approach lead to other difficulties. Details are given in the next Section.

\subsubsection{Etore and al. model for a stochastic Landau-Lifshitz equation}  

The It\^o version of the stochastic Landau-Lifchitz equation possesses many drawback and the main one follows directly from the invariance criterion that we derive in Corollary \ref{spherei}. Indeed, we have:

\begin{lemma}
The sphere $S^2$ is not invariant under the flow of the It\^o version of equation (\ref{ELL}).
\end{lemma}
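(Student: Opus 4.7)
The plan is to apply Itô's formula to $F(\mu) = \|\mu\|^2$ along a solution of the Itô version of equation (\ref{ELL}) and extract a strictly positive drift from the quadratic-variation correction. Since the function is constant on $S^2$, any nonzero drift will force $\|\mu_t\|^2$ away from $1$ and thus violate invariance.

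Write the SDE in the form $d\mu_t = f(\mu_t)\, dt + \sigma(\mu_t)\, dW_t$, where $W_t$ is a three-dimensional Brownian motion and $\sigma(\mu)$ is the $3\times 3$ matrix realising $v \mapsto -\varepsilon\,\mu\wedge v - \varepsilon\alpha\,\mu\wedge(\mu\wedge v)$. Introducing the skew-symmetric matrix $M = M(\mu)$ defined by $Mv = \mu\wedge v$, this reads $\sigma(\mu) = -\varepsilon\,(M + \alpha M^2)$. Itô's formula gives
\begin{equation*}
d\|\mu_t\|^2 \;=\; 2\mu_t\cdot f(\mu_t)\, dt \;+\; 2\mu_t\cdot\sigma(\mu_t)\, dW_t \;+\; \mathrm{tr}\bigl(\sigma(\mu_t)\sigma(\mu_t)^T\bigr)\, dt.
\end{equation*}
The drift and martingale parts vanish identically because every cross product appearing in $f$ and in each column of $\sigma$ is orthogonal to $\mu_t$; the entire content therefore lies in the trace term, which must be shown to be strictly positive on $S^2$.

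For the trace, I would use $M^T = -M$ and the standard identity $M^2 = \mu\mu^T - \|\mu\|^2 I$. Expanding $\sigma\sigma^T = \varepsilon^2(M+\alpha M^2)(-M + \alpha M^2)$, the two terms proportional to $M^3$ cancel because $M$ is skew while $M^2$ is symmetric, leaving $\sigma\sigma^T = \varepsilon^2(-M^2 + \alpha^2 M^4)$. On $S^2$ one computes directly $\mathrm{tr}(M^2) = \|\mu\|^2 - 3\|\mu\|^2 = -2$ and, using $M^4 = \|\mu\|^2\mu\mu^T\cdot\|\cdots\|$ simplifying to $M^4 = -\|\mu\|^2 M^2 = -M^2$, $\mathrm{tr}(M^4) = 2$. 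Hence
\begin{equation*}
d\|\mu_t\|^2 \big|_{\mu_t\in S^2} \;=\; 2\varepsilon^2(1+\alpha^2)\, dt \;>\; 0,
\end{equation*}
so any solution starting on $S^2$ immediately leaves it.

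The only non-mechanical step is the trace computation: one must verify that the $\alpha M^3$ cross term in $\sigma\sigma^T$ genuinely cancels (otherwise one would get a $\mathrm{tr}(M^3)$, which is zero anyway, but would complicate the signs) and correctly evaluate $\mathrm{tr}(M^4)$ on the unit sphere. Everything else is routine; the conceptual point, which is what makes the result robust, is that the Itô correction is insensitive to the tangency of the cross products to $\mu_t$ and inevitably produces a $\|\sigma\|_F^2$-type drift that can only be zero when $\sigma\equiv 0$ on $S^2$, which is not the case here.
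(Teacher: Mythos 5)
Your proof is correct, and it rests on the same mechanism as the paper's -- the It\^o second--order correction to $d\|\mu_t\|^2$ equals $\mathrm{tr}\bigl(\sigma\sigma^{T}\bigr)\,dt$ and cannot vanish on the sphere -- but your execution is genuinely different and in fact sharper. The paper invokes its general invariance criterion (Corollary \ref{spherei}, derived from Theorem \ref{invarianceito}), writes out the $3\times 3$ matrix $\sigma$ in coordinates as in (\ref{sigmaetore}), and observes that the vanishing of its diagonal entries on $S^2$ would force $\alpha=0$, contradicting $\alpha\neq 0$. You instead run the It\^o computation directly and coordinate--free, writing $\sigma=-\varepsilon(M+\alpha M^2)$ with $Mv=\mu\wedge v$, and using $M^{T}=-M$ and $M^2=\mu\mu^{T}-\|\mu\|^2 I$; your trace identities check out (the $\alpha M^3$ cross terms cancel by symmetry, $\mathrm{tr}(M^2)=-2$ on $S^2$, and $M^4=-\|\mu\|^2M^2$ gives $\mathrm{tr}(M^4)=2$ there), yielding the explicit drift $2\varepsilon^2(1+\alpha^2)$. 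What your route buys is, first, a quantitative statement: since the martingale part vanishes identically, $\|\mu_t\|^2$ obeys the pathwise equation $d\|\mu_t\|^2=2\varepsilon^2\bigl(\|\mu_t\|^2+\alpha^2\|\mu_t\|^4\bigr)\,dt$ with strictly positive right--hand side, so the solution leaves $S^2$ immediately -- and this is precisely the source of the normalization factor $\sqrt{2\epsilon^2(\alpha^2+1)t+1}$ appearing in the invariantized equation (\ref{etoreequation2}). Second, it makes visible that the obstruction persists even for $\alpha=0$ (the drift is then $2\varepsilon^2>0$), a point somewhat obscured by the paper's contradiction argument, which might suggest the Larmor case is exempt. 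One stylistic nit: the line ``$M^4=\|\mu\|^2\mu\mu^{T}\cdot\|\cdots\|$'' is garbled as written; state cleanly that $M^4=-\|\mu\|^2M^2$ follows from $\bigl(\mu\mu^{T}-\|\mu\|^2I\bigr)^2=-\|\mu\|^2\bigl(\mu\mu^{T}-\|\mu\|^2I\bigr)$, and the rest goes through verbatim.
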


\begin{proof}
By Corollary \ref{spherei}, the diffusion term must be zero on the sphere $S^2$ and all $t\in \R$. The It\^o version of the Landau-Lifshitz equation has the form
\begin{equation}
d \mu_t = [ -\mu_t \wedge b- \alpha \mu_{t}\wedge ( \mu_{t}\wedge b )] dt+ \varepsilon  \sigma(t,\mu_t )dW_t,
\end{equation}
where $\alpha \not=0$ and
\begin{equation}
\label{sigmaetore}
\sigma(t,x)= -x \wedge . -\alpha x\wedge(x\wedge .)= \begin{pmatrix}
\alpha(x_3^2+x_2^2) &  x_3-\alpha x_1x_2 &-x_2 -\alpha x_3x_1\\
-x_3-\alpha x_1x_2 &\alpha (x_3^2+x_1^2 )&x_1 -\alpha x_3x_2 \\
x_2-\alpha x_1 x_3 & -x_1-\alpha x_3x_2 & \alpha(x_2^2+x_1^2) 
\end{pmatrix}
\end{equation}
and $W_t$ is a $3$-dimentional Brownian motion. The condition on $\sigma$ on the diagonal terms implies that $\alpha=0$, i.e. that we can not have a dissipative term and we recover the Larmor equation in contradiction with our assumption that $\alpha \not= 0$. This concludes the proof.
\end{proof}

\begin{center}
\begin{figure}[!ht]
\centerline{\includegraphics[width=0.6\textwidth]{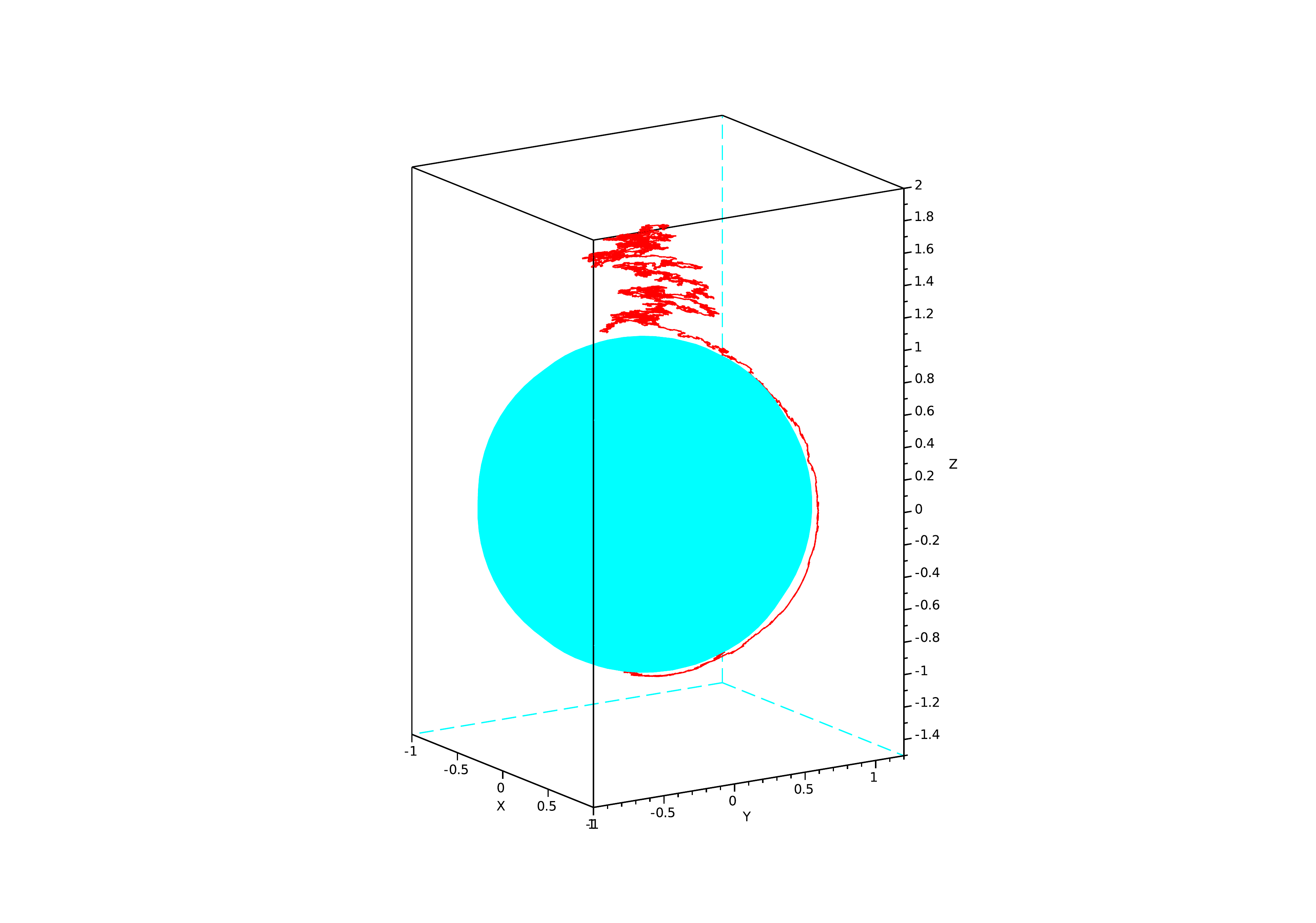}} 
\caption{A solution of the Ito version of equation (\ref{ELL}).}
\end{figure}
\end{center}

Moreover, the  Lyapunov function $V(\mu )=\mu .b$ and the first integral $I (\mu )=\parallel \mu \parallel^2$ does not behaves correctly contrary to the Stratonovich version of the equation.

\begin{center}
\begin{figure}[!ht]
\centerline{%
    \begin{tabular}{cc}
        \includegraphics[width=0.4\textwidth]{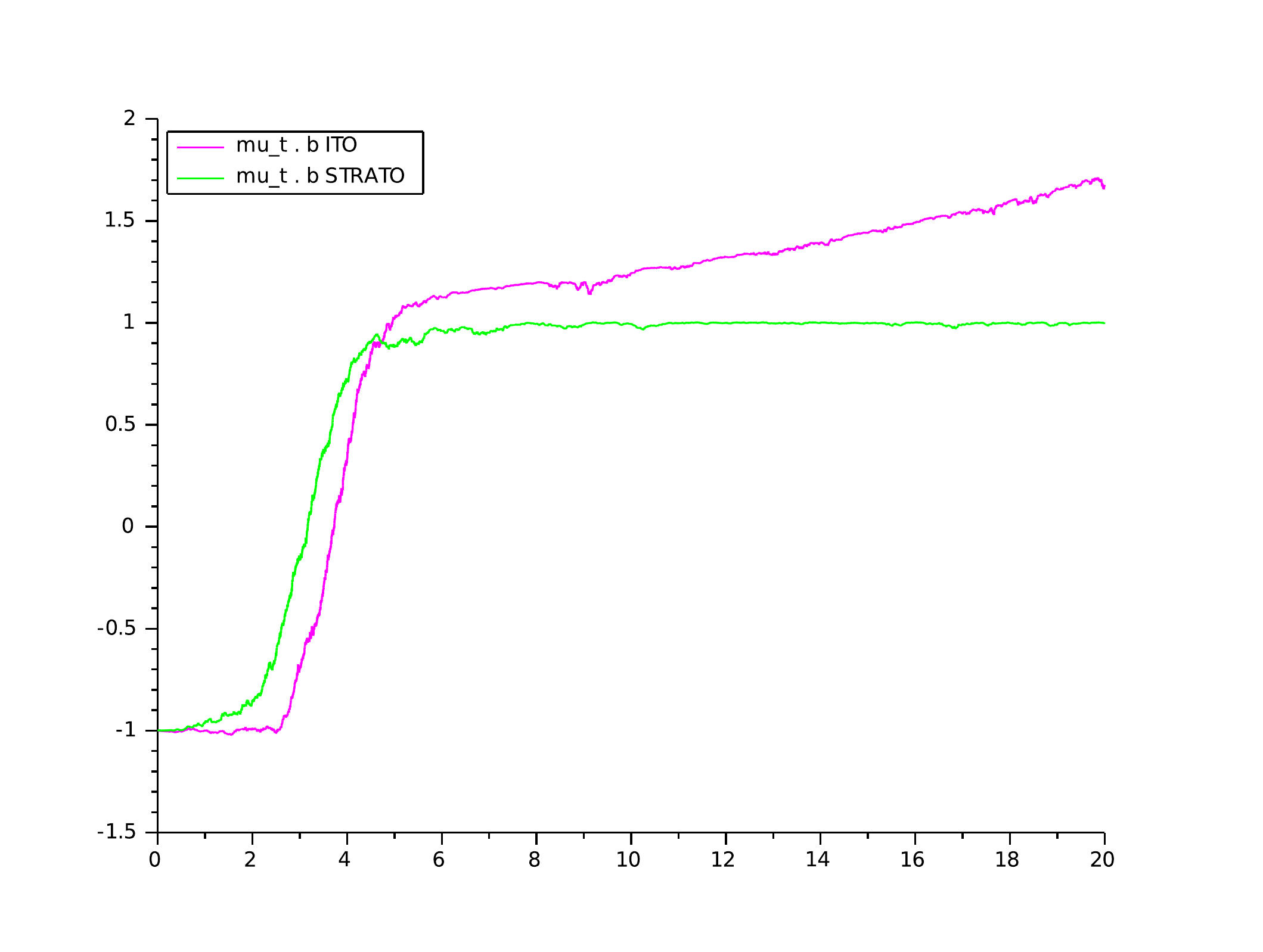} & \includegraphics[width=0.4\textwidth]{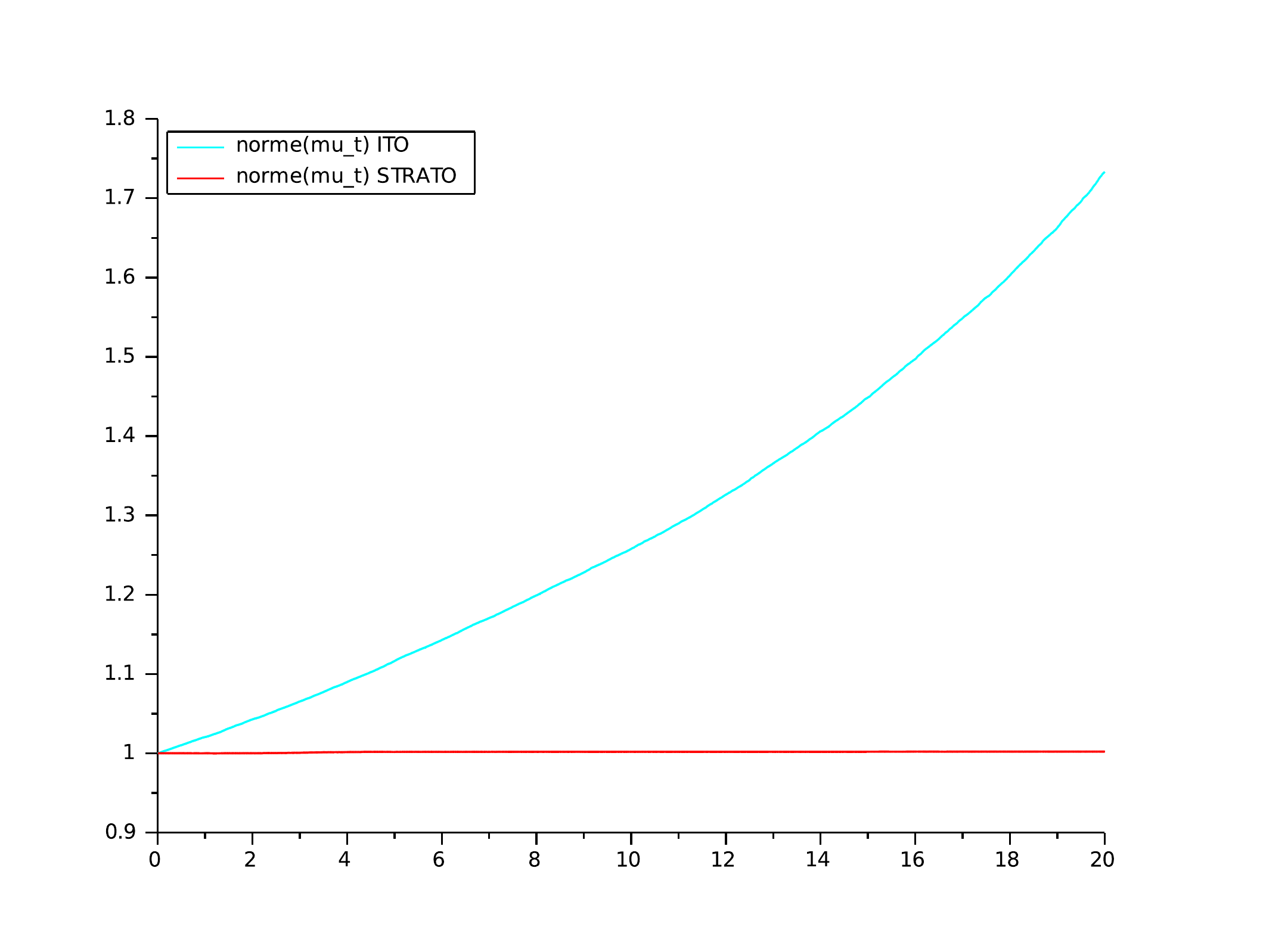}\\
        \footnotesize{(a) Behavior of the  Lyapunov function $V(\mu )=\mu.b$.} & \footnotesize{(b) Behavior of $\parallel \mu_t \parallel^2$}\\
    \end{tabular}}
    \caption{Comparaison between the Ito and Stratonovich version of equation (\ref{ELL}).}
\end{figure}
\end{center}

The previous result excludes the use of the It\^o formalism for a direct stochastic generalization of the Landau-Lifshitz equation. However, we can use the {\it invariantization method} exposed in \cite{cy} in order to obtain an It\^o model, related to the previous one, but which satisfies the invariance of the sphere $S^2$.

\paragraph{Invariance of $S^2$ and equilibrium points}

Using the invariantization method exposed in \cite{cy}, we recover the {\bf Etore and al. version of the Stochastic Landau-Lifshitz equation} defined by:

\begin{equation}
\label{etoreequation2}
dx_t=\left[ -\frac{1}{2}\frac{ 2\epsilon^2 (\alpha^2 +1)}{2\epsilon^2 (\alpha^2 +1)t+1}x_t+\frac{1}{\sqrt{2\epsilon^2 (\alpha^2 +1)t+1}}f(t,x_t)\right]  dt+\frac{1}{\sqrt{2\epsilon^2 (\alpha^2 +1)t+1} }\sigma(t,x_t)dW_t ,
\end{equation}
where $\sigma$ is defined by equation (\ref{sigmaetore}).\\

However, such a model does not preserves the equilibrium points of the initial system. Indeed, we have:

\begin{lemma}
The points $\mu =\pm b /\parallel b\parallel$ are not equilibrium points of equation (\ref{etoreequation2}).
\end{lemma}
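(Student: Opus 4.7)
The plan is to check the equilibrium condition directly on equation (\ref{etoreequation2}) at the candidate points $\mu_\star = \pm b/\|b\|$. For $\mu_\star$ to be an equilibrium of an It\^o SDE, both the drift and the diffusion must vanish identically at $\mu_\star$ for all $t \geq 0$. I will focus on the drift.

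First, I would record that $\pm b/\|b\|$ are equilibrium points of the original deterministic Landau--Lifshitz equation, so $f(t,\pm b/\|b\|)=0$. Plugging $\mu_\star = \pm b/\|b\|$ into the drift coefficient of (\ref{etoreequation2}), the term $\frac{1}{\sqrt{2\epsilon^2(\alpha^2+1)t+1}} f(t,\mu_\star)$ is zero, so the drift reduces to
\[
-\frac{\epsilon^2(\alpha^2+1)}{2\epsilon^2(\alpha^2+1)t+1}\,\mu_\star = \mp\frac{\epsilon^2(\alpha^2+1)}{2\epsilon^2(\alpha^2+1)t+1}\,\frac{b}{\|b\|}.
\]
Since $\epsilon\neq 0$, $\alpha^2+1>0$ and $b\neq 0$, this vector is nonzero for every $t\geq 0$. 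Hence the drift does not vanish at $\mu_\star$, which already suffices to conclude that $\mu_\star$ is not an equilibrium point of (\ref{etoreequation2}).

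No real obstacle is expected here; the entire argument is a one-line computation. The only subtle point worth mentioning is the interpretation: the invariantization procedure of \cite{cy} introduces the radial correction term $-\tfrac{1}{2}\tfrac{2\epsilon^2(\alpha^2+1)}{2\epsilon^2(\alpha^2+1)t+1}x_t$ precisely in order to force solutions to remain on $S^2$, but this correction is collinear with $x_t$ itself and therefore nonzero at $\pm b/\|b\|$. In other words, the very mechanism used to restore invariance of the sphere in the It\^o framework is incompatible with preservation of the deterministic equilibria, which illustrates the drawback of the invariantization method already signaled in Section \ref{classical}.
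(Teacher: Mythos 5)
Your argument is correct, but it is not the one the paper uses: you rule out the equilibrium by inspecting the \emph{drift}, whereas the paper's proof inspects the \emph{diffusion}. The paper shows that the linear map $v\mapsto \sigma(t,\pm b/\parallel b\parallel)[v]=-w-\alpha\frac{b}{\parallel b\parallel}\wedge w$ (with $w=\frac{b}{\parallel b\parallel}\wedge v$) annihilates only vectors $v$ collinear with $b$, so with a three-dimensional Brownian motion the noise term cannot vanish at $\pm b/\parallel b\parallel$ unless $b=0$. Your route is more elementary: since $f(t,\pm b/\parallel b\parallel)=0$, the drift reduces to the radial invariantization correction $-\frac{\epsilon^{2}(\alpha^{2}+1)}{2\epsilon^{2}(\alpha^{2}+1)t+1}\,\mu_\star\neq 0$, and either obstruction alone suffices because an equilibrium of an It\^o equation requires both coefficients to vanish. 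The trade-off is worth noting. The paper's diffusion-based obstruction is precisely the one removed by the subsequent modification (replacing $dW_t$ by $b\,dW_t$ makes $\sigma(t,\mu_\star)[b]=0$), so its proof sets up the next lemma. Your drift-based obstruction, by contrast, depends only on the correction term, which equation (\ref{modifiedetoreequation}) retains verbatim; taken at face value your computation would therefore also disqualify $\pm b/\parallel b\parallel$ as equilibria of the modified equation, contradicting the lemma that follows. The resolution is that the invariantization constant $2\epsilon^{2}(\alpha^{2}+1)$ is computed for the full three-dimensional noise; redoing the invariantization for the one-dimensional noise $b\,dW_t$ yields a state-dependent radial correction proportional to $\parallel x\wedge b\parallel^{2}$, which does vanish at $\pm b/\parallel b\parallel$. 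So your proof is valid for (\ref{etoreequation2}) as written, and it usefully exposes that the drift correction must be recomputed whenever the noise is changed --- a point the paper glosses over.
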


\begin{proof}
For $\mu=\pm b/\parallel b\parallel$, we have for all $v\in \R^3$ that  
\begin{equation}
\sigma(t,\pm b/\parallel b\parallel )[v]= -\di\frac{b}{\parallel b\parallel} \wedge v -\pm \alpha \di\frac{b}{\parallel b\parallel} \wedge( \pm\di\frac{b}{\parallel b\parallel} \wedge v) .
\end{equation}
Denoting by $w$ the vector $\di\frac{b}{\parallel b\parallel} \wedge v$ the previous equality can be written as 
\begin{equation}
\sigma(t,\pm b/\parallel b\parallel )[v]=-w -\alpha \di\frac{b}{\parallel b\parallel} \wedge w ,
\end{equation}
and is zero if 
\begin{equation}
w= -\alpha \di\frac{b}{\parallel b\parallel} \wedge w .
\end{equation}
As $\alpha \di\frac{b}{\parallel b\parallel} \wedge w$ is a vector orthogonal to $w$, this equality is satisfied if and only if 
$w=0$, i.e. $\di\frac{b}{\parallel b\parallel} \wedge v =0$ for all $v\in \R^3$. This is possible if and only if $b=0$ which is in contradiction with the assumption that the effective field is non zero.
\end{proof}

It must be noted that the previous problem can be easily solved by modifying a little bit the modeling of the stochastic behavior of the effective magnetic field. Indeed, let us consider instead of $dW_t$ the following vector 
\begin{equation}
b dW_t ,
\end{equation}
where $W_t$ is a one dimensional Brownian motion. This assumptions is equivalent to say that we consider only stochastic behavior in the direction of the initial field $b$. This is of course very particular, but in this case the new model preserves the equilibrium points of the initial system:

\begin{lemma}
Let us consider the modified Etore and al. stochastic Landau-Lifshitz equation defined by 
\begin{equation}
\label{modifiedetoreequation}
dx_t=\left[ -\frac{1}{2}\frac{ 2\epsilon^2 (\alpha^2 +1)}{2\epsilon^2 (\alpha^2 +1)t+1}x_t+\frac{1}{\sqrt{2\epsilon^2 (\alpha^2 +1)t+1}}f(t,x_t)\right]  dt+\frac{1}{\sqrt{2\epsilon^2 (\alpha^2 +1)t+1} }\sigma(t,x_t) \left [ bdW_t \right ] ,
\end{equation}
where $\sigma$ is defined by equation (\ref{sigmaetore}) and $W_t$ is a one dimensional Brownian motion. This equation possesses as equilibrium points $\pm b/\parallel b\parallel$.
\end{lemma}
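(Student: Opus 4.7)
The plan is to perform the same kind of direct verification that was used in the proof of the preceding negative lemma, but exploiting the fact that the new diffusion is rank-one and aligned with $b$; this is precisely the geometric reason the equilibria survive. The essential computations are: (i) the deterministic vector field $f(t,\cdot)$ vanishes at $\pm b/\parallel b\parallel$, and (ii) the new diffusion vector $\sigma(t,\cdot)[b]$ vanishes there as well.

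First I would evaluate $f(t,\pm b/\parallel b\parallel)$. Since $\pm b/\parallel b\parallel$ is collinear with $b$, we have $(\pm b/\parallel b\parallel)\wedge b=0$, so both terms of $f(t,x)=-x\wedge b-\alpha\, x\wedge(x\wedge b)$ vanish identically. This is nothing but the classical fact (already recorded earlier in the paper) that $\pm b/\parallel b\parallel$ are the equilibria of the deterministic Landau--Lifshitz vector field.

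Next I would evaluate the new diffusion coefficient $\sigma(t,x_t)[b]=-x_t\wedge b-\alpha\, x_t\wedge(x_t\wedge b)$ at $x_t=\pm b/\parallel b\parallel$. By the same collinearity, the outer cross product $(\pm b/\parallel b\parallel)\wedge b$ in the first summand vanishes, and the inner cross product in the second summand vanishes for the same reason; hence $\sigma(t,\pm b/\parallel b\parallel)[b]=0$. This is the point where the modification really pays: in the proof of the preceding lemma one needed $\sigma(t,\pm b/\parallel b\parallel)[v]=0$ for \emph{every} $v\in\mathbb{R}^{3}$, which forced $b=0$, whereas here the noise is confined to the one direction $v=b$, which is exactly the one the equilibria lie on.

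Finally I would combine the two vanishings to conclude that at $x_t=\pm b/\parallel b\parallel$ both the $f$-part of the drift and the diffusion vector contribute zero, so that the right-hand side of \eqref{modifiedetoreequation} reduces to the purely radial rescaling term $-\tfrac12\frac{2\varepsilon^{2}(\alpha^{2}+1)}{2\varepsilon^{2}(\alpha^{2}+1)t+1}x_t$. Under the change of variables $\mu_t=\sqrt{2\varepsilon^{2}(\alpha^{2}+1)t+1}\;x_t$ built into the invariantization method of \cite{cy}, this radial term is exactly canceled and one recovers $d\mu_t=f(t,x_t)\,dt+\sigma(t,x_t)[b]\,dW_t$, which is identically zero at the equilibria; so the physical magnetization $\mu_t$ remains equal to $\pm b/\parallel b\parallel$ for all $t$. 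The one subtle point, which I would expect to be the main obstacle, is not the computation itself but the bookkeeping around this rescaling --- one must specify that ``equilibrium'' is read through the invariantization (i.e.\ through $\mu_t$, not through the auxiliary $x_t$) in order to absorb the harmless radial factor without altering the direction on $S^{2}$.
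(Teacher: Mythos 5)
Your core computation coincides with the paper's: the published proof is a single line that reduces to the preceding (negative) lemma ``with $v$ always collinear to $b$'', i.e.\ exactly your observation that $\sigma(t,\pm b/\parallel b\parallel)[b]=-(\pm b/\parallel b\parallel)\wedge b-\alpha\,(\pm b/\parallel b\parallel)\wedge\bigl((\pm b/\parallel b\parallel)\wedge b\bigr)=0$ because $(\pm b/\parallel b\parallel)\wedge b=0$, together with the (implicit) classical fact that the Landau--Lifshitz drift $f$ vanishes at these points. Where you genuinely go beyond the paper is your third step: you notice that the radial rescaling term $-\frac12\frac{2\epsilon^2(\alpha^2+1)}{2\epsilon^2(\alpha^2+1)t+1}\,x_t$ does \emph{not} vanish at $\pm b/\parallel b\parallel$, so that, read literally, the constant process $x_t\equiv\pm b/\parallel b\parallel$ does not solve the modified equation; the paper is silent on this point. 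Your resolution --- undoing the invariantization via $\mu_t=\sqrt{2\epsilon^2(\alpha^2+1)t+1}\;x_t$, for which the radial term cancels exactly and $\mu_t\equiv\pm b/\parallel b\parallel$ is then a genuine constant solution (the rescaled point stays collinear to $b$, so $f$ and $\sigma[\,b\,]$ still vanish along it) --- is correct and makes precise the sense in which the lemma holds: the equilibrium is one of direction on $S^2$, not a fixed point of the auxiliary $x_t$-equation. So: same key mechanism as the paper (the noise confined to the direction $b$ is annihilated at the poles $\pm b/\parallel b\parallel$), plus a legitimate bookkeeping caveat that the paper's one-line proof glosses over and that is worth stating explicitly.
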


\begin{proof}
This follows easily from the previous proof only saying that $v$ is always collinear to $b$.
\end{proof}

\paragraph{Stability of the equilibrium points}

We consider the modified stochastic Landau-Lifshitz equation (\ref{modifiedetoreequation}). In this case, the equilibrium points are preserved so that one can study their stability. 

\begin{lemma}
The function $V(\mu )=-\mu .b$ satisfies 
\begin{equation}
\left .
\begin{array}{lll}
d\left [ V(\mu_t ) \right ] & = & \left[ -\di\frac{1}{2}\frac{ 2\epsilon^2 (\alpha^2 +1)}{2\epsilon^2 (\alpha^2 +1)t+1} V(\mu_t )- \alpha \di\frac{1}{\sqrt{2\epsilon^2 (\alpha^2 +1)t+1}}\parallel \mu_t \wedge b\parallel^2 \right]  dt\\
 & & -\alpha \di\frac{1}{\sqrt{2\epsilon^2 (\alpha^2 +1)t+1} }\parallel \mu_t \wedge b\parallel^2 dW_t .
\end{array}
\right .
\end{equation}
\end{lemma}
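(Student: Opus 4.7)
The plan is to apply the multi-dimensional It\^o formula to $V(\mu_t) = -\mu_t \cdot b$ and observe that, because $V$ is linear in $\mu$, the Hessian $\nabla^2_\mu V$ vanishes identically and no It\^o correction term appears. The formula therefore reduces to $dV(\mu_t) = -b \cdot d\mu_t$, and the whole proof amounts to substituting the expression for $d\mu_t$ provided by (\ref{modifiedetoreequation}), distributing $-b\,\cdot$ over the drift and diffusion pieces, and simplifying a few triple products.

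First I would treat the drift. The scalar term $-\tfrac{1}{2}\tfrac{2\epsilon^2(\alpha^2+1)}{2\epsilon^2(\alpha^2+1)t+1}\mu_t$ contributes $-\tfrac{1}{2}\tfrac{2\epsilon^2(\alpha^2+1)}{2\epsilon^2(\alpha^2+1)t+1} V(\mu_t)$ after taking its dot product with $-b$, by the very definition of $V$. For the part involving $f(t,\mu_t) = -\mu_t \wedge b - \alpha\, \mu_t \wedge (\mu_t \wedge b)$, I would use two elementary vector identities: the triple product $b \cdot (\mu_t \wedge b) = 0$ kills the Larmor piece, and the rearrangement $u \cdot (v \wedge w) = (u \wedge v)\cdot w$ gives
\begin{equation}
b \cdot \bigl(\mu_t \wedge (\mu_t \wedge b)\bigr) = (b \wedge \mu_t) \cdot (\mu_t \wedge b) = -\parallel \mu_t \wedge b \parallel^2.
\end{equation}
Hence $-b \cdot f(t,\mu_t) = -\alpha \parallel \mu_t \wedge b \parallel^2$, and dividing by $\sqrt{2\epsilon^2(\alpha^2+1)t+1}$ produces exactly the missing drift contribution in the statement.

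The diffusion is then handled by the same algebra. Since $W_t$ is one-dimensional, $\sigma(t,\mu_t)[b\, dW_t] = \bigl[-\mu_t \wedge b - \alpha\, \mu_t \wedge (\mu_t \wedge b)\bigr]\, dW_t$, so taking $-b\,\cdot$ and reusing the two triple-product identities just written yields $-\alpha \parallel \mu_t \wedge b \parallel^2\, dW_t$, which after multiplication by $\tfrac{1}{\sqrt{2\epsilon^2(\alpha^2+1)t+1}}$ matches the claimed martingale term.

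The calculation is essentially routine vector-identity bookkeeping; there is no real obstacle. The one point worth emphasising is conceptual rather than technical: the absence of a Wong--Zakai-type correction despite the $\mu_t$-dependence of $\sigma$ is a direct consequence of the linearity of $V$, so the multi-dimensional It\^o formula here collapses to the classical chain rule applied to $d\mu_t$. Sign tracking across the triple-product rearrangements is the only place where a slip is possible.
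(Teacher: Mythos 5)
Your computation is correct: the linearity of $V$ kills the It\^o correction, the scalar term reproduces $-\tfrac{1}{2}\tfrac{2\epsilon^2(\alpha^2+1)}{2\epsilon^2(\alpha^2+1)t+1}V(\mu_t)$, and the triple-product identities $b\cdot(\mu_t\wedge b)=0$ and $b\cdot\bigl(\mu_t\wedge(\mu_t\wedge b)\bigr)=-\parallel\mu_t\wedge b\parallel^2$ give both the drift and diffusion contributions as claimed. The paper states this lemma without proof, and your argument is exactly the routine verification it implicitly relies on, consistent in style with the mixed-product manipulations used in the neighbouring proofs.
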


As a consequence, we obtain the following estimate:

\begin{lemma}
Let $L$ be the infinitesimal generator associated to the stochastic equation (\ref{modifiedetoreequation}). We have 
\begin{equation}
\left .
\begin{array}{lll}
L[V](\mu_t ) & = & \left[ -\di\frac{1}{2}\frac{ 2\epsilon^2 (\alpha^2 +1)}{2\epsilon^2 (\alpha^2 +1)t+1} V(\mu_t )+ 
\alpha \di\frac{1}{\sqrt{2\epsilon^2 (\alpha^2 +1)t+1}} V(\mu_t)^2 \right . \\
 & &\left . -\alpha \di\frac{1}{\sqrt{2\epsilon^2 (\alpha^2 +1)t+1}}\parallel \mu_t \parallel^2 \parallel  b\parallel^2 \right]  dt .
\end{array}
\right .
\end{equation}
\end{lemma}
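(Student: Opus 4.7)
The plan is to extract $L[V](\mu_t)$ directly from the decomposition of $dV(\mu_t)$ given in the preceding lemma, and then apply a standard vector identity to convert the term $\|\mu_t \wedge b\|^2$ into the combination $V(\mu_t)^2 - \|\mu_t\|^2\|b\|^2$ appearing in the target formula.

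First, I would recall the correspondence between the infinitesimal generator and the drift of an It\^o process. For any smooth function $V$ and any It\^o process $\mu_t$ satisfying an SDE whose It\^o decomposition reads $dV(\mu_t) = A_t\,dt + B_t\,dW_t$, one has by definition $L[V](\mu_t) = A_t$. Applying this directly to the formula for $d[V(\mu_t)]$ established in the previous lemma, I read off
\begin{equation}
L[V](\mu_t) = -\frac{1}{2}\frac{2\epsilon^2(\alpha^2+1)}{2\epsilon^2(\alpha^2+1)t+1}V(\mu_t) - \alpha \frac{1}{\sqrt{2\epsilon^2(\alpha^2+1)t+1}}\|\mu_t\wedge b\|^2.
\end{equation}

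Second, I would invoke the Lagrange identity $\|u\wedge v\|^2 = \|u\|^2\|v\|^2 - (u\cdot v)^2$ with $u=\mu_t$ and $v=b$. Since $V(\mu_t) = -\mu_t\cdot b$, we have $V(\mu_t)^2 = (\mu_t\cdot b)^2$, and therefore
\begin{equation}
\|\mu_t\wedge b\|^2 = \|\mu_t\|^2\|b\|^2 - V(\mu_t)^2.
\end{equation}
Substituting this identity into the expression for $L[V](\mu_t)$ above and distributing the factor $-\alpha/\sqrt{2\epsilon^2(\alpha^2+1)t+1}$ gives exactly the claimed formula.

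There is no real obstacle in this argument: the content is already contained in the previous lemma, and the present statement is simply a rewriting of its drift part in terms of $V(\mu_t)^2$ and $\|\mu_t\|^2\|b\|^2$ rather than $\|\mu_t\wedge b\|^2$. The only point that requires a brief remark is why no second-order It\^o correction appears when one identifies $L[V]$ with the drift of $dV(\mu_t)$, but this is automatic since the preceding lemma has already absorbed all It\^o corrections into the coefficient $A_t$ of $dt$. (Incidentally, one may verify consistency by noting that $V$ is linear in $\mu$, hence its Hessian vanishes, so the Wong--Zakai-type second order term in the generator contributes nothing beyond what comes from the drift of $\mu_t$.)
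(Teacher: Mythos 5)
Your proposal is correct and follows essentially the same route as the paper: the paper's own proof consists precisely of invoking the Lagrange identity $\parallel u\wedge v\parallel^2 =\parallel u\parallel^2 \parallel v\parallel^2 -(u\cdot v)^2$ and substituting it into the drift obtained in the preceding lemma. Your additional remarks on identifying $L[V]$ with the $dt$-coefficient and on the vanishing Hessian of the linear function $V$ only make explicit what the paper leaves implicit.
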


\begin{proof}
We use the classical identity mixing the wedge product and the scalar product given for all vectors $u,v\in \R^3$ by 
\begin{equation}
\parallel u\wedge v\parallel^2 =\parallel u\parallel^2 \parallel v\parallel^2 -(u.v)^2 .
\end{equation}
\end{proof}

We then deduce from Theorem \ref{stability}:

\begin{theorem}
For all $\epsilon >0$ sufficiently small, the equilibrium point $b/\parallel b\parallel$ is asymptotically stable and the equilibrium point $-b/\parallel b\parallel$ is unstable.
\end{theorem}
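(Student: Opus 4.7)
The strategy is to apply Khasminskii's stochastic Lyapunov theorems (Theorems~\ref{stability} and~\ref{instability}) with candidate functions built from the deterministic Lyapunov function $V(\mu) = -\mu\cdot b$, shifted (resp.\ composed with a logarithm) so as to place the equilibrium of interest at the level $0$ (resp.\ at infinity). The explicit expression for $L[V]$ given in the previous lemma supplies all of the arithmetical input.

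First, for the equilibrium $b/\|b\|$ I would introduce the non-negative shift $U_+(\mu) = V(\mu) + \|b\| = \|b\| - \mu\cdot b$, which vanishes on $S^2$ precisely at $b/\|b\|$. Since adding a constant does not affect the generator, $LU_+ = LV$; substituting $V = U_+ - \|b\|$ and using $\|b\|^2 - V^2 = U_+(2\|b\| - U_+)$ yields, on the invariant sphere,
\begin{equation*}
LU_+ \;=\; -2\alpha\beta_t\|b\|\,U_+ \;+\; \alpha\beta_t\, U_+^{2} \;-\; \tfrac12\lambda_t\, U_+ \;+\; \tfrac12\lambda_t\|b\|,
\end{equation*}
where $\lambda_t = 2\epsilon^2(\alpha^2+1)/(2\epsilon^2(\alpha^2+1)t+1)$ and $\beta_t = 1/\sqrt{2\epsilon^2(\alpha^2+1)t+1}$. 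The leading term is the strictly negative dissipation $-2\alpha\beta_t\|b\|\,U_+$ inherited from the deterministic Landau--Lifshitz damping, while $\lambda_t = O(\epsilon^2)$. For $\epsilon$ small enough and in a suitably chosen neighbourhood of $b/\|b\|$, one obtains $LU_+ \leq -c\,U_+$ with a positive constant $c$, and Theorem~\ref{stability} then yields stochastic asymptotic stability.

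For the equilibrium $-b/\|b\|$, I would use the singular candidate $U_-(\mu) = -\log(\|b\| - V(\mu)) = -\log(\|b\| + \mu\cdot b)$, well defined on $S^{2}\setminus\{-b/\|b\|\}$ and satisfying $U_-(\mu)\to +\infty$ as $\mu\to -b/\|b\|$, as demanded by Theorem~\ref{instability}. Applying It\^o's formula to $U_-$, together with the drift--diffusion expression for $dV$ from the preceding lemma and the identity $\|\mu\wedge b\|^{2} = (\|b\|-V)(\|b\|+V)$ on $S^{2}$, reduces $LU_-$ to
\begin{equation*}
LU_- \;=\; -\,\frac{\tfrac12\lambda_t V}{\|b\|-V} \;-\; \alpha\beta_t(\|b\|+V) \;+\; \tfrac12\alpha^{2}\beta_t^{2}(\|b\|+V)^{2}.
\end{equation*}
On an annulus $\{\rho \leq \|\mu + b/\|b\|\| \leq r\}$ in $S^{2}$, the quantity $\|b\|-V$ is bounded away from $0$, the singular first term is negative, and the dissipative contribution $-\alpha\beta_t(\|b\|+V)\approx -2\alpha\beta_t\|b\|$ dominates the quadratic correction $\tfrac12\alpha^{2}\beta_t^{2}(\|b\|+V)^{2}$ provided $\epsilon$ is chosen small enough. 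Theorem~\ref{instability} then yields stochastic instability of $-b/\|b\|$.

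The main obstacle is the persistent term $\tfrac12\lambda_t\|b\|$ in the expression for $LU_+$, which does not vanish at the equilibrium and reflects the radial drift introduced by the invariantisation procedure. A naive application of Theorem~\ref{stability} fails because of this residue; the delicate step is the quantitative comparison between the dissipative rate $2\alpha\beta_t\|b\|$ and the perturbative constant $\tfrac12\lambda_t\|b\|=O(\epsilon^2)$, and it is precisely the smallness of $\epsilon$ that tips the balance on the appropriate Khasminskii neighbourhood. An analogous smallness-of-$\epsilon$ argument is what ensures that the quadratic It\^o correction in $LU_-$ is absorbed by the linear negative term.
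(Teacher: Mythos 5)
Your overall strategy is the natural one, and indeed it is the only route the paper itself gestures at: the paper states this theorem with no proof beyond the phrase ``we then deduce from Theorem~\ref{stability}'' applied to the formula for $L[V]$, so your shifted function $U_+=\|b\|-\mu\cdot b$, the logarithmic function $U_-=-\log(\|b\|+\mu\cdot b)$, and the identity $\|\mu\wedge b\|^{2}=(\|b\|-V)(\|b\|+V)$ on $S^{2}$ are exactly the details the paper omits. Your two expressions for $LU_+$ and $LU_-$ are correct, and the instability half is essentially sound, granting that the diffusion of (\ref{modifiedetoreequation}) carries the factor $\epsilon$ inherited from (\ref{ELL}) so that the quadratic It\^o correction in $LU_-$ is $O(\epsilon^{2})$ and is absorbed by the linear dissipative term on any annulus around $-b/\|b\|$.

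The gap is in the stability half, and it is the one you flag yourself but do not close. The residual term $\tfrac12\lambda_t\|b\|=\epsilon^{2}(\alpha^{2}+1)\beta_t^{2}\|b\|$ is strictly positive for every $t\geq 0$ and every $\epsilon>0$, so $LU_+>0$ at the point $b/\|b\|$ itself: the hypothesis of Theorem~\ref{stability}, namely $LV\leq 0$ on a full punctured neighbourhood of the equilibrium, fails on the set where $U_+$ is of order $\epsilon^{2}$, no matter how small $\epsilon$ is. Shrinking $\epsilon$ shrinks the bad region but never removes it, so ``the smallness of $\epsilon$ tips the balance on the appropriate Khasminskii neighbourhood'' is not an argument. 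Moreover the residue is not an artifact of your choice of $U_+$: it equals $\nabla U_+\cdot\bigl(-\tfrac12\lambda_t x\bigr)$ evaluated at $x=b/\|b\|$, i.e.\ it records the fact that the radial drift of the invariantized equation does not vanish there, so $b/\|b\|$ is not an equilibrium in the sense $f(t,x_0)=0$, $\sigma(t,x_0)=0$ required by Khasminskii's framework in the first place. To close the argument you would have to do one of the following: prove a weaker statement (ultimate boundedness of the solution in an $O(\epsilon^{2})$-neighbourhood of $b/\|b\|$, i.e.\ practical stability); exploit quantitatively the decay $\lambda_t\to 0$ as $t\to\infty$; or first repair the claim that $\pm b/\|b\|$ are genuine equilibria of (\ref{modifiedetoreequation}). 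As written, the step ``one obtains $LU_+\leq -c\,U_+$ \dots\ and Theorem~\ref{stability} then yields stochastic asymptotic stability'' does not hold.
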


We then recover the classical qualitative behavior of the deterministic system.

\subsection{A RODE approach to a stochastic Landau -Lifshitz equation}
\label{rode}

We propose a new stochastic model using the RODE stochastisation procedure. Depending on the properties of the underlying stochastic process used to construct the RODE model, the corresponding stochastic Landau-Lifshitz equation will preserve only the $S^2$ invariance or more of the classical properties of the classical Landau-Lifshitz equation.

\subsubsection{A RODE  Landau -Lifshitz model}

Assume that the external field on the deterministic LL equation is a stochastic process $\mathbf{b}_t$ defined on the probability space $(\Omega, \mathcal{F},\mathbb{P}),$ then we get the following model :
\begin{equation}
\label{RLLE}
d\mu =\bigl( -\mu\wedge \mathbf{b}(t,\omega)-\alpha\mu \wedge(\mu \wedge \mathbf{b}(t,\omega)) \bigr) dt,\  \text{for almost all }  \omega \in \Omega,
   \tag{RLLE}
\end{equation} 
where $\mathbf{b}_t$ is an $\mathbb{R}^{3}$-valued stochastic process with continuous sample paths, the initial value $\mu_{0}$ is a $\mathbb{R}^{3}$-valued random variable. \\

The solution of the stochastic equation \ref{RLLE} is a stochastic process $\mu(t)$, which is adapted to the filtration $\mathcal{F}$ exist and unique provided that the driving process $\mathbf{b}_t$ is $\mathcal{F}$-adapted and independent of the initial condition $\mu_{0},$ see \cite{kloeden2}.

\subsubsection{Invariance of the sphere $S^2$}

As for the RODE version of the Larmor equation, the invariance of $S^2$ is preserved without any condition on $b_t$. Precisely, we have:

\begin{theorem}
\label{rodei}
Assume that the stochastic process $\mathbf{b}_t$ is adapted to the filtration $\mathcal{F},$ and independent of the initial condition $\mu_{0}.$ The stochastic Landau-Lifshitz equation \ref{RLLE} preserve the invariance of the sphere $S^{2}$ if and only if, $\mathbf{b}_t$ is almost surely bounded, for all time $t\geq 0.$
\end{theorem}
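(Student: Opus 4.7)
The plan is to exploit the pathwise character of a RODE: for almost every $\omega \in \Omega$ the equation \eqref{RLLE} reduces to a non-autonomous deterministic ODE on $\R^3$ driven by the continuous sample path $t \mapsto \mathbf{b}_t(\omega)$, so the invariance of $S^2$ can be analyzed $\omega$ by $\omega$ and then reassembled almost surely, with boundedness of $\mathbf{b}_t$ entering only through the prolongation argument.

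For the sufficiency direction I would fix $\omega$ with $M(\omega):=\sup_{t\ge 0}\|\mathbf{b}_t(\omega)\|<\infty$ and consider the pathwise ODE with vector field $f(\mu,b)=-\mu\wedge b-\alpha\mu\wedge(\mu\wedge b)$, which is polynomial (hence locally Lipschitz) in $\mu$ and continuous in $t$. Local existence and uniqueness then follow from Picard--Lindel\"{o}f. The identity
\begin{equation*}
\mu\cdot f(\mu,b)=-\mu\cdot(\mu\wedge b)-\alpha\mu\cdot(\mu\wedge(\mu\wedge b))=0,
\end{equation*}
which uses the orthogonality of a cross product to each of its factors, yields $\frac{d}{dt}\|\mu_t\|^2=0$, so $\|\mu_t\|=\|\mu_0\|=1$ whenever the solution is defined. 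This a priori bound together with $\|f(\mu_t,\mathbf{b}_t(\omega))\|\le (1+\alpha)M(\omega)$ on $S^2$ lets the standard continuation theorem push the solution out to all of $[0,\infty)$, so that $\mu_t\in S^2$ for all $t\ge 0$ almost surely.

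For the necessity direction I would argue by contrapositive: if $\mathbf{b}_t$ is not almost surely bounded, then $A=\{\omega:\sup_{t\ge 0}\|\mathbf{b}_t(\omega)\|=\infty\}$ has positive probability, and the aim is to show that on $A$ the system fails to have a globally defined, adapted, $S^2$-valued sample-path solution in the sense of the existence remark preceding the theorem. The cleanest route is to invoke the RODE existence theory of \cite{kloeden2}: global sample-path solvability requires a linear-growth/Lipschitz bound on $f(\cdot,\mathbf{b}_t(\omega))$ whose constant grows with $\sup_t\|\mathbf{b}_t(\omega)\|$, and on $A$ this constant is infinite, obstructing the construction of the global trajectory on which the invariance statement is formulated.

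The main obstacle is precisely this necessity step. From a purely geometric pointwise viewpoint, the orthogonality identity above, combined with continuity of $\mathbf{b}_t(\omega)$ on each compact $[0,T]$, already gives the a priori bound $\|\mu_t\|=1$ and hence prolongation on every finite horizon, regardless of the supremum over $t\ge 0$. The obstruction when $\mathbf{b}_t$ is unbounded therefore cannot be of ODE-blowup type and must be located in the probabilistic layer---measurability and adaptedness of the glued sample-path solutions, or the integrability needed to view $\mu$ as a well-defined stochastic process---and making this precise within the RODE framework used in the paper is the delicate part of the argument.
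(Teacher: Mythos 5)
Your sufficiency argument is correct and is essentially the paper's own proof: the paper disposes of the whole theorem by citing Theorem \ref{persistencerode}, whose proof is exactly the pathwise computation you wrote out --- fix $\omega$, treat \eqref{RLLE} as a deterministic non-autonomous ODE, and use the orthogonality of $\mu\wedge b$ and $\mu\wedge(\mu\wedge b)$ to $\mu$ to get $\frac{d}{dt}\parallel\mu_t\parallel^2=0$. Your additional care about local Lipschitz continuity and prolongation via the a priori bound $\parallel\mu_t\parallel=1$ is a welcome tightening of what the paper leaves implicit.

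The genuine gap is the necessity direction, and you have in fact diagnosed your own obstruction correctly without drawing the conclusion. The theorem hypothesizes that $\mathbf{b}_t$ has continuous sample paths, so for almost every $\omega$ the path $t\mapsto\mathbf{b}_t(\omega)$ is bounded on every compact interval $[0,T]$; combined with the a priori bound $\parallel\mu_t\parallel=1$ this gives global existence on $[0,\infty)$ and invariance of $S^2$ for every sample path, whether or not $\sup_{t\ge 0}\parallel\mathbf{b}_t(\omega)\parallel$ is finite. Hence no blow-up, measurability, or linear-growth argument of the kind you sketch can produce a failure of invariance from unboundedness of the global supremum: the ``only if'' implication is not established by your proposal, and --- as your closing remark essentially shows --- it cannot be established by any pathwise ODE argument, because invariance holds unconditionally. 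Note that the paper is in the same position: its one-line proof invokes Theorem \ref{persistencerode}, which asserts invariance for an arbitrary $\eta_t$ with continuous sample paths and therefore proves only the ``if'' direction (indeed, without needing the boundedness hypothesis at all). So the defect you ran into is a defect of the statement, not of your sufficiency argument; the honest resolution is either to drop the ``only if'' or to reformulate boundedness as a hypothesis guaranteeing global-in-time well-posedness rather than as an equivalent condition for invariance.
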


\begin{proof}
This follows directly from Theorem \ref{persistencerode}.
\end{proof}

This result is crucial for applications and modeling. We can go further studying the persistence of equilibrium points.

\subsubsection{Equilibrium points}

We already know that the RODE version of the Larmor equation preserves the equilibrium points of the initial equation if and only if $b_t$ is $\langle b\rangle$-valued, i.e. of the form $b_t =b\eta_t$ with $\eta_t$ a real valued stochastic process. For the RODE version of the Landau-Lifshitz equation a similar result holds:

\begin{lemma}
The equilibrium points of the Landau-Lifshitz equation are preserved by a $b_t$-RODE stochastisation if and only if $b_t$ is of the form $b_0 \eta_t$ where $\eta_t$ is a real valued bounded stochastic process. The equilibrium points are then given by $\pm b_0 /\parallel b_0 \parallel$.
\end{lemma}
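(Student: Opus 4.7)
The plan is to reduce the equilibrium condition for the RODE equation \ref{RLLE} to a pointwise linear-algebra identity in $\R^3$ that must hold almost surely for every $t\ge 0$. Throughout, denote by
$$F(\mu;v)=-\mu\wedge v-\alpha\,\mu\wedge(\mu\wedge v)$$
the vector field of the deterministic Landau--Lifshitz equation at field value $v$. A point $\mu_\star$ is an equilibrium of \ref{RLLE} if and only if $F(\mu_\star;b_t(\omega))=0$ for a.e.\ $\omega$ and all $t$.

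\textbf{Sufficiency.} Assume $b_t=\eta_t b_0$ with $\eta_t$ a scalar process. Taking $\mu_\star=\pm b_0/\p b_0\p$, collinearity of $\mu_\star$ with $b_t$ forces $\mu_\star\wedge b_t=\eta_t\mu_\star\wedge b_0=0$, and consequently $\mu_\star\wedge(\mu_\star\wedge b_t)=0$ as well. Hence $F(\mu_\star;b_t)=0$ identically, and the two deterministic equilibria persist.

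\textbf{Necessity.} Conversely, suppose the equilibria $\pm b_0/\p b_0\p$ of the deterministic system persist for the $b_t$-RODE stochastisation. Fix $\mu_\star=b_0/\p b_0\p$ and decompose, for each $(t,\omega)$,
$$b_t=(b_t\cdot\mu_\star)\mu_\star+b_t^{\perp},\qquad b_t^{\perp}\perp \mu_\star.$$
Using the identity $a\wedge(a\wedge v)=a(a\cdot v)-v\,\p a\p^2$ together with $\p\mu_\star\p=1$, one computes
$$F(\mu_\star;b_t)=-\mu_\star\wedge b_t^{\perp}+\alpha\,b_t^{\perp}.$$
Now $\mu_\star\wedge b_t^{\perp}$ is orthogonal to $b_t^{\perp}$ (since it is orthogonal to both vectors generating its cross product), while $\alpha b_t^{\perp}$ lies along $b_t^{\perp}$; these two vectors are linearly independent as soon as $b_t^{\perp}\neq 0$, so $F(\mu_\star;b_t)=0$ forces $b_t^{\perp}=0$ a.s. for all $t$. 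Thus $b_t(\omega)$ is collinear with $\mu_\star$, i.e.\ $b_t(\omega)=\eta_t(\omega)\,b_0$ for a scalar process $\eta_t$. The equilibrium at $-b_0/\p b_0\p$ yields the same conclusion. Finally, to ensure that the statement ``$\pm b_0/\p b_0\p$ are equilibria on $S^2$'' makes sense one needs the RODE flow to preserve $S^2$; by Theorem \ref{rodei} this requires $b_t$ to be almost surely bounded, which, since $b_0\neq 0$, is equivalent to boundedness of $\eta_t$.

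\textbf{Expected difficulty.} The only delicate point is the necessity step: one must show that the two vectors $\mu_\star\wedge b_t^{\perp}$ and $\alpha b_t^{\perp}$ cannot cancel unless $b_t^{\perp}=0$. This is precisely where the presence of the dissipative coefficient $\alpha>0$ enters in an essential way; the argument collapses for $\alpha=0$, which is consistent with the Larmor case of Lemma \ref{equilibriumlarmor}, where the weaker condition $\mu_\star\in\langle b_t\rangle$ was already enough. The remaining steps are essentially bookkeeping combined with a direct invocation of Theorem \ref{rodei} for the boundedness clause.
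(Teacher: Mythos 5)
Your proof is correct and follows essentially the same route as the paper: both reduce the equilibrium condition to the observation that the precession term $-\mu\wedge b_t$ and the damping term $-\alpha\,\mu\wedge(\mu\wedge b_t)$ are mutually orthogonal (the first normal to the plane spanned by $\mu$ and $b_t$, the second inside it), so both must vanish, forcing $\mu$ collinear with $b_t$ for every $t$ and hence $b_t$ of fixed direction; your orthogonal decomposition of $b_t$ together with the triple-product identity merely packages this computation a little more cleanly than the paper's factorisation $\mu\wedge\left[ b_t+\alpha\,\mu\wedge b_t\right]=0$. The one point worth tightening is the final clause of the lemma, namely that $\pm b_0/\p b_0\p$ are the \emph{only} equilibria: your sufficiency step shows they are equilibria but not that they exhaust the set, although exhaustiveness follows at once by running your necessity computation with an arbitrary unit vector $\mu_{\star}$ in place of $b_0/\p b_0\p$, which forces $\mu_{\star}$ to be collinear with $b_0$.
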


\begin{proof}
Equilibirum points of the stochastic equation (\ref{RLLE}) are solutions of
\begin{equation}
-\mu\wedge b_t -\alpha\mu \wedge(\mu \wedge b_t) =0 ,
\end{equation}
when $\alpha\not=0$. Properties of the mixed product implies that 
\begin{equation}
\mu \wedge\left [ b_t +\alpha \mu \wedge b_t \right ] =0 .
\end{equation} 
Then either $\mu$ is colinear to $b_t +\alpha\mu \wedge b_t$ or one these two vectors is zero. As $\mu \in S^2$, we can not have $\mu=0$ and the only possible solution is 
\begin{equation}
b_t +\alpha \mu \wedge b_t =0 .
\end{equation}
As $\mu\wedge b_t$ is orthogonal to $b_t$ the only possibility is $b_t =0$ which is not allowed. Hence, the vector $\mu$ is colinear to $b_t +\mu \wedge b_t$. As $\mu \wedge b_t$ is orthogonal to the linear space generated by $b_t$ and $\mu$, the vector $\mu$ can not be colinear to $b_t +\alpha \mu\wedge b_t$ unless $\mu \wedge b_t =0$, i.e. $\mu$ is colinear to $b_t$. \\

This equation as a solution if and only if $b_t$ keep a constant direction for all time, i.e. if $b_t = b_0 \eta_t$ where $\eta_t$ is a real valued bounded stochastic process. In this case, we obtain two equilibrium points given by $\pm b_0 /\parallel b_0 \parallel$ as in the deterministic case with $b=b_0$. This concludes the proof.
\end{proof}

\subsubsection{Stochastic stability}
   
The classical  Lyapunov function is $V(\mu )=-\mu .b$ and governs the global structure of the solutions. In the stochastic case, we recover exactly the same result as long as we take a stochastic effective field which preserves the properties of the initial system concerning equilibrium points.

\begin{lemma}
The function $V(\mu )=-\mu .b$ is a  Lyapunov function for the RODE Landau-Lifshitz equation.
\end{lemma}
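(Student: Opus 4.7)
The plan is to exploit the pathwise nature of the RODE: for each fixed realization $\omega \in \Omega$, equation (\ref{RLLE}) becomes a classical ordinary differential equation in $\mu$, so I can differentiate $V(\mu_t)=-\mu_t \cdot b$ in the usual sense without invoking It\^o calculus. This mirrors exactly the deterministic proof already given earlier in the excerpt, transposed sample-path by sample-path. I would work under the equilibrium-preserving assumption established in the previous lemma, namely $\mathbf{b}_t = b\,\eta_t$ with $\eta_t$ a scalar bounded stochastic process, so that $V$ is the natural Lyapunov candidate adapted to the equilibria $\pm b/\parallel b \parallel$ on $S^2$.

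First I would compute, for a fixed $\omega$,
\begin{equation}
\frac{d}{dt}V(\mu_t) = -\frac{d\mu_t}{dt}\cdot b = \bigl(\mu_t \wedge \mathbf{b}_t + \alpha\, \mu_t \wedge (\mu_t \wedge \mathbf{b}_t)\bigr)\cdot b.
\end{equation}
Substituting $\mathbf{b}_t = b\,\eta_t$ and using linearity, the first term is $\eta_t\,(\mu_t \wedge b)\cdot b$, which vanishes by the mixed-product identity $[u,v,v]=0$ (the same cancellation used in the deterministic proof). For the second term I would invoke the BAC$-$CAB identity $u\wedge(v\wedge w) = v(u\cdot w) - w(u\cdot v)$ to rewrite $\mu_t \wedge(\mu_t \wedge b) = \mu_t(\mu_t\cdot b) - b\,\parallel\mu_t\parallel^2$; taking scalar product with $b$ and using $\parallel\mu_t\parallel=1$ on the invariant sphere (Theorem \ref{rodei}) gives $(\mu_t\wedge(\mu_t\wedge b))\cdot b = (\mu_t\cdot b)^2 - \parallel b\parallel^2 = -\parallel \mu_t \wedge b\parallel^2$ by the Lagrange identity.

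Combining these yields the pathwise formula
\begin{equation}
\frac{d}{dt}V(\mu_t) = -\alpha\,\eta_t\,\parallel \mu_t \wedge b\parallel^2,
\end{equation}
which is $\leq 0$ for every sample path, with equality only at the equilibria $\pm b/\parallel b\parallel$, provided $\eta_t \geq 0$ almost surely. Hence $V$ is a (strict) pathwise Lyapunov function for the RODE Landau-Lifshitz equation, and the qualitative picture of the deterministic case persists.

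The main obstacle, and really the only nontrivial modeling point, is the sign of $\eta_t$: if one allows $\eta_t$ to change sign the derivative of $V$ can be positive along part of a trajectory, and the lemma can hold only in a weakened form (e.g.\ almost surely for nonnegative drivers, or conditionally). The vector calculus itself is identical to the deterministic computation and carries no genuine difficulty once the pathwise viewpoint is adopted.
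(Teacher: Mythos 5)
Your proof is correct and follows essentially the same route as the paper: a pathwise computation of $\frac{d}{dt}V(\mu_t)$ with $\mathbf{b}_t=b\,\eta_t$, the first term vanishing by the mixed-product identity and the second reducing to $-\alpha\,\eta_t\,\parallel\mu_t\wedge b\parallel^2$. Your remark about the sign of $\eta_t$ is well taken; the paper quietly resolves it by assuming from the outset that $\eta_t$ is a \emph{positive} bounded real-valued process.
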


\begin{proof}
We have $b_t =b\eta_t$ where $\eta_t$ is a positive bounded real valued stochastic process. As a consequence, we obtain
\begin{equation}
\left .
\begin{array}{lll}
\di\frac{d}{dt} \left [ V(\mu_t )\right ] & = & -\di\frac{d\mu}{dt} .b ,\\
 &= & [ \mu\wedge b_t +\alpha \mu \wedge (\mu \wedge b_t ) ].b .
\end{array}
\right .
\end{equation}
The first term is zero as $b_t$ is colinear to $b$ and the second one can be written using the properties of the mixed product as
\begin{equation}
\alpha [\mu \wedge (\mu \wedge b_t ) ].b = -\alpha \eta_t \parallel \mu_t \wedge b\parallel^2 .
\end{equation}
As a consequence, we obtain that 
\begin{equation}
\di\frac{d}{dt} \left [ V(\mu_t )\right ] =-\alpha \eta_t \parallel \mu_t \wedge b\parallel^2  <0,
\end{equation}
for almost all values of $\omega$ and $t\in \R$. This concludes the proof.
\end{proof}   

\subsubsection{An explicit example}

The previous properties about equilibrium points, stability and invariance are satisfied for all stochastic effective field of the form 
\begin{equation}
b_t =b .\eta_t ,
\end{equation}
where $b$ is a fixed vector of $\R^3$ and $\eta_t$ is a bounded real valued stochastic process. As an example, one can consider as a stochastic effective field the following process
\begin{equation}
b(t)=b. \exp[  \frac{B_{t}}{\sqrt{2t \log (\log t)}} )] .
\end{equation}
Indeed, by the {\it law of iterated logarithm} (see \cite[p.68]{oksendal2003stochastic}, Theorem 5.1.2) the process $\frac{B_{t}}{\sqrt{2t \log (\log t)}}$ is a bounded process and satisfies 
\begin{equation}
\lim\sup_{t\rightarrow \infty} \di \frac{B_{t}}{\sqrt{2t \log (\log t)}} =1\ \ a.s.
\end{equation}
As a consequence, our assumptions on $b_t$ are satisfied.

\subsubsection{Preserving the double bracket dissipative Hamiltonian structure}

The classical double bracket dissipative Hamiltonian structure of the Landau-Lifshitz equation is given in Part \ref{symppoisson}, Section \ref{landaulifshitzdoublebracket}. We have the following result:

\begin{lemma}
Any $b_t$-RODE stochastisation of the Landau-Lifshitz equation preserves the double bracket dissipative Hamiltonian structure.
\end{lemma}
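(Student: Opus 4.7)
The plan is to exploit the pathwise deterministic nature of RODEs together with the linearity in $b$ of the Hamiltonian already identified in Section \ref{landaulifshitzdoublebracket}. For every fixed realisation $\omega \in \Omega$, the driving process $b_t(\omega)$ is merely a continuous curve in $\R^3$, so the RODE becomes a non-autonomous but otherwise classical ordinary differential equation
\begin{equation*}
\dot{\mu} = -\mu \wedge b_t(\omega) - \alpha\, \mu \wedge (\mu \wedge b_t(\omega)).
\end{equation*}
All that needs to be shown is that, pathwise and pointwise in time, this vector field coincides with the sum of a Poisson and a double bracket Hamiltonian vector field associated with the structures $\{.,.\}_-$ and $\{\{.,.\}\}$ of Section \ref{landaulifshitzdoublebracket}.

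Concretely, I would introduce the time-dependent family of Hamiltonians
\begin{equation*}
H^\omega_t(z) := z \cdot b_t(\omega), \qquad \omega \in \Omega,\ t \in \R,
\end{equation*}
and compute $\nabla H^\omega_t(z) = b_t(\omega)$. Applying Lemma \ref{hamlarmorpoisson} with $b$ replaced by the frozen value $b_t(\omega)$ gives $X_{H^\omega_t,\{.,.\}_-}(z) = -z \wedge b_t(\omega)$, while the double bracket computation performed for Lemma in Section \ref{landaulifshitzdoublebracket} yields $X_{H^\omega_t,\{\{.,.\}\}}(z) = -\alpha\, z \wedge (z \wedge b_t(\omega))$. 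Summing the two contributions reproduces exactly the right-hand side of the RODE, so for almost every $\omega$ we have
\begin{equation*}
\dot{\mu} = X_{H^\omega_t,\{.,.\}_-}(\mu) + X_{H^\omega_t,\{\{.,.\}\}}(\mu),
\end{equation*}
which is precisely a (pathwise, time-dependent) double bracket dissipative Hamiltonian system on $(\R^3,\{.,.\}_-,\{\{.,.\}\})$.

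The key conceptual point -- and the only thing that has to be emphasised carefully -- is that the linearity of $H_b$ in $b$ is what allows the substitution $b \leadsto b_t(\omega)$ to be transparent: the brackets $\{.,.\}_-$ and $\{\{.,.\}\}$ are intrinsic objects attached to the phase space $\R^3$ and do not depend on the parameter, while the Hamiltonian depends linearly on it and therefore inherits a bounded continuous-in-$t$ dependence from the sample path $b_t(\omega)$. There is no genuine obstacle, since the RODE framework sidesteps the subtleties (Wong--Zakai corrections, symplectic form preservation, etc.) that appear in the It\^o or Stratonovich settings discussed earlier; the only verification needed is the routine pointwise identification of vector fields above.
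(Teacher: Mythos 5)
Your proposal is correct and follows essentially the same route as the paper, which simply observes that all the computations of Section \ref{landaulifshitzdoublebracket} remain valid when $b$ is replaced by $b_t$; you have merely made explicit the pathwise-freezing argument and the role of the linearity of $H_b$ in $b$. No gap to report.
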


\begin{proof}
This follows from the fact that all the computations made in Part \ref{symppoisson}, Section \ref{landaulifshitzdoublebracket} are valid if $b$ is replaced by $b_t$. 
\end{proof}

\section{Conclusion and perspectives}

\subsection{Numerical integrator for stochastic Landau-Lifshitz equations}

In \cite{aquino}, M. d'Aquino constructs {\bf geometric numerical integrator} for the Landau-Lifshitz-Gilbert equation. Using the double bracket structure of our stochastic RODE Landau-Lifshitz equation, it is tempting to develop the same kind of numerical integrator in the stochastic case.

\subsection{Hysteresis phenomenon}

A classical problem in ferromagnetism concerns the behavior of the system when the effective field is slowly varying between two opposite values, says $b$ and $-b$. When the effective field is fixed, the magnetic moment asymptotically stabilizes along the field. However, when the effective field is slowly varying, we wait for a competition between the stabilization rate of $\mu$ and the variation rate of $b$. In particular, as noted by Etore and al. \cite{Etore}, if the variation rate is sufficiently small, we expect that the magnetic moment takes different back and forth paths when the external field switches from $b$ to $-b$ and then from $-b$ to $b$. This characterizes the {\bf hysteresis behavior} of the system. It will be interesting to study this phenomenon for the class of proposed stochastic Landau-Lifshitz equations introduced in this paper. 

\begin{appendix}

\section{Reminder about stochastic invariance of submanifolds}
\label{sec-invariance}

In this section, we derive an {\bf invariance criterion} for a submanifold denoted by $M$ of codimension $1$ of $\mathbb{R}^{n}$ which correspond to the zero set of a given function $F:\mathbb{R}^{n}\rightarrow \mathbb{R} $ of class $C^{2},$ i.e.
\begin{equation}
\label{manifold}
M=\lbrace x\in \mathbb{R}^{n} \setminus F(x)=0 \rbrace ,
\end{equation}
under the flow of a stochastic differential equation in the It\^o or Stratonovich sense. This result is by itself not new and many general results are known in particular a {\bf stochastic Naguno-Brezis} Theorem as proved by Aubin-Da Prato in \cite{aubin} or A. Milian in \cite{milian}. However, most of these results are difficult to read for a non-specialist in the field of stochastic calculus. The main interest of the following computations are precisely that our criterion can be {\bf easily derived} using {\bf basic results} in stochastic calculus. 

\subsection{Geometric definition of invariance}

We consider an ordinary differential equation of the form 
\begin{equation} 
\label{DE1}
\left\{
\begin{array}{lll}
 \dot {x}_t & = & f (t,x_t ), \\
x(0) & = & x_0
\end{array}
\right.
\tag{ODE}
\end{equation}
where $f:\mathbb{R}^{+} \times \mathbb{R}^{n}\longrightarrow \mathbb{R}$ is a function of class $C^{1}$ and $x_0 \in \R^n$ is the initial condition.

\begin{definition}
A given submanifold $M \subset \mathbb{R}^{n}$ is said to be invariant under the flow of the differential equation \eqref{DE1} if for all $ x_{0}\in M ,$ the maximal solution $ x_{t}(x_{0})$ starting in $ x_{0} $ when $ t=0 $ satisfied $x_{t}(x_{0})\in M$ for all $ t\in \mathbb{R}^+.$
\end{definition}

We denote by $T_{x} M$ the tangent plane of $M$ at $x$, we can write the invariance condition as follows 
\begin{equation}
f(t,x)\in T_{x}M,\quad \text{ for all } (t,x)\in \mathbb{R}^+ \times M.
\end{equation}
As $M$ is of codimension 1, for all $x\in M $ we can define the normal vector $N(x) $ to the tangent hyperplane $ T_{x}M $ in $ x ,$ such that 
$$ T_{x} M =\lbrace y \in \mathbb{R}^{d}, y.N(x)=0\rbrace,$$   
then the invariance condition can be written as 
\begin{equation}
\label{IF}
N(x)\cdot f(t,x)=0, \quad \text{for all } (t,x)\in \mathbb{R}^+ \times M.
\end{equation}
When $M$ is of the form \eqref{manifold}, the normal vector to $M$ at $x$ is equal to $\nabla F(x).$ Then the invariance condition reads as 
\begin{equation}
\label{IFF}
\nabla F(x) \cdot f(t,x)=0,\quad \text{for all } (t,x)\in \mathbb{R}^+ \times M.
\tag{IF}
\end{equation}

In the stochastic case, the trajectories are continuous but nowhere differentiable. As a consequence the previous geometric condition can not be used. In the following we discuss two natural generalization of the notion of invariance in the stochastic setting.

\subsection{Strong stochastic invariance}

Let us consider a stochastic differential equation of the form \eqref{IE}. The stochastic character of the flow allows us to defined two natural notions of invariance.
 
\begin{definition}[Strong persistence]
A submanifold $M$ is invariant in the strong sense for the stochastic system \eqref{IE} if for every initial data $x_0\in M$ almost surely, the corresponding solution $x(t),$ satisfies 
\begin{equation}
\mathbb{P}\lbrace F\left( x(t)\right)  =0, t \in [ t_0, +\infty )\rbrace =1,
\end{equation}
i.e., the solution almost surely attains values within the submanifold $M.$
\end{definition}

\subsubsection{Strong invariance - the Kubo oscillator}
\label{kubo}

Let us consider the {\bf stochastic Kubo oscillator} model as defined by Milstein in \cite{mil}:
\begin{equation}
\label{Kubo oscillator}
\left\lbrace \begin{array}{lll}
dX_1&=&-a X_2dt-\sigma X_2\circ dW_t, \quad X_1(0)=1\\
dX_2&=&a X_1dt+\sigma X_1\circ dW_t, \quad X_2(0)=0
\end{array}
\right.
\end{equation}
where $X=\begin{pmatrix} X_1\\ X_2 \end{pmatrix}\in \R^{2}, a, \sigma \in\R$ and $W_t$ is a 1-dimensional Brownian motion.\\

These systems have specific properties. In particular, the function $H_0$, when it does not depend on time, is a {\bf first integral} corresponding to the {\bf energy} of the underlying system, i.e. that for all solution of the deterministic system $(x_{1,t} ,x_{2,t} ) \in \R^{2n}$, we have 
\begin{equation}
\di\frac{d}{dt} \left [H_0 (x_{1,t} ,x_{2,t} )\right ]=0.
\end{equation}
As a consequence, all the {\bf level surfaces} of the function $H_0$ are invariant under the flow of the deterministic system. In general, this property is destroyed by a stochastic perturbation.  However, as we will see, the Kubo oscillator preserves this particular features of the deterministic system.\\

In the Kubo oscillator case, the family of Hamiltonian is given by $\mathbf{H}=\{ H_0 (X)=X_1^2 +X_2^2 ,\ H_1=\sigma H_0 \}$, where $\sigma \in \R$. The previous result on the invariance of the level surfaces of the Hamiltonian $H_0$ is preserved under the stochastic perturbation. Precisely, we have:

\begin{lemma}
The function $H_0 (X)=X_1^2 +X_2^2$ is a strong first integral for the Kubo oscillator, i.e. 
\begin{equation}
d\left [ H_0 (X_{1,t} ,X_{2,t} )\right ] =0 ,
\end{equation}
over the solution of the system, meaning that the level surfaces of $H_0$ are strongly invariant under the flow of the Kubo oscillator.
\end{lemma}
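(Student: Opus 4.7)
The plan is to exploit the defining feature of the Stratonovich integral, namely that it obeys the ordinary Leibniz/chain rule. Since the Kubo oscillator system is written in Stratonovich form, applying the classical chain rule directly to $H_0(X_1,X_2) = X_1^2 + X_2^2$ along a solution yields
\[
dH_0(X_{1,t},X_{2,t}) = 2X_{1,t}\,dX_{1,t} + 2X_{2,t}\,dX_{2,t}.
\]
This is the whole strategy: reduce the question to a pointwise algebraic computation, with no correction terms to control.

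Next I would substitute the two Stratonovich equations into the right-hand side. The drift contributions combine to $-2aX_{1,t}X_{2,t}\,dt + 2aX_{1,t}X_{2,t}\,dt = 0$, and the diffusion contributions give $-2\sigma X_{1,t}X_{2,t}\circ dW_t + 2\sigma X_{1,t}X_{2,t}\circ dW_t = 0$. Both the $dt$ part and the $\circ dW_t$ part cancel identically, so $dH_0 = 0$. Integrating pathwise then gives $H_0(X_{1,t},X_{2,t}) = H_0(X_{1,0},X_{2,0})$ almost surely for every $t\geq 0$, which is exactly the strong invariance of the level set through the initial condition.

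The only delicate point --- really a notational rather than a conceptual obstacle --- is to resist the temptation to first convert the system to It\^o form before differentiating. In the It\^o framework one would have to supply the Wong--Zakai correction to the drift and the Hessian term from the multi-dimensional It\^o formula; these two corrections do cancel, but the bookkeeping is substantially less transparent, and one risks obscuring why the identity holds. Staying in the Stratonovich formalism makes the conservation law manifest and underlines the general principle that geometric conservation laws of a deterministic system transfer cleanly to their Stratonovich stochastic analogues whenever the noise vector field is tangent to the level sets of the conserved quantity, as it is here by inspection.
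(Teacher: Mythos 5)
Your proof is correct and is essentially identical to the paper's own argument: both apply the Stratonovich chain rule to $H_0$, substitute the two equations of the Kubo oscillator, and observe that the drift and diffusion contributions cancel term by term. The additional remarks about avoiding the It\^o conversion are sound but not needed beyond what the paper already does.
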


\begin{proof}
We have 
\begin{equation}
\left .
\begin{array}{lll}
d [H_0 (X_{1,t} ,X_{2,t} )] & = &  2X_{1,t} dX_1 +2X_{2,t} dX_2 ,\\
 & = & -2aX_1 X_2 dt -2\sigma X_1 X_2 \circ dW_t +2aX_1 X_2 dt +2\sigma X_1 X_2 \circ dW_t ,\\
 & = & 0.
\end{array}
\right .
\end{equation}
This concludes the proof.
\end{proof}

As a consequence, any solution of the Kubo oscillator starting with an initial condition $(X_{1,0} ,X_{2,0} )$ will remain on the circle
$X_1^2 +X_2^2 =r_0^2$ with $r_0^2 =X_{1,0}^2 +X_{2,0}^2$.

\subsection{Explicit conditions for Invariance - It\^o case}

In this case, the strong persistence of the invariant submanifold $M$ is conditioned by the deterministic invariance condition (IF) imposed at the stochastic perturbation i.e.,  $ \sigma(t,x_{t})\in T_{x_{t}}M.$ Although, the form of $\sigma$ has not an influence in the weak persistence, where $ E \left[\int_{0}^t \nabla F(x_{s})\cdot \sigma(s,x_{s})dW_{s}\right]=0 $ for any stochastic perturbation.

 \begin{theorem}[It\^o 's strong invariance]
 \label{invarianceito}
 Let $M$ be a submanifold defined by a function $F$ invariant under the deterministic flow associated to \eqref{DE1}, i.e., 
 $$  \nabla F(x)\cdot f (t,x)=0, \text{ for all } x\in M, t\geq 0$$
The submanifold $ M $ is strongly invariant under the flow of the stochastic system \eqref{IE}, if and only if, 
 $$  \nabla F(x)\cdot \sigma (t,x)=0, \text{ for all } x\in M, t\geq 0$$
 and 
\begin{equation}
\label{sc}
\sum_{i,j}\frac{\partial^{2}F}{\partial x_{i}\partial x_{j}}(x_{t})\sum^{l}_{k=1}\sigma_{i,k}(t,x_{t})\sigma_{j,k}(t,x_{t}) =0.
\end{equation} 
\end{theorem}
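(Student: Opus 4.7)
The plan is to apply the multi-dimensional It\^o formula to $F(x_t)$ and read off conditions on $f$ and $\sigma$ from the requirement that $F(x_t)\equiv 0$ almost surely. Writing the It\^o decomposition explicitly gives
\begin{equation*}
dF(x_t)= \left[\nabla F(x_t)\cdot f(t,x_t)+\tfrac{1}{2}\sum_{i,j}\frac{\partial^2 F}{\partial x_i\partial x_j}(x_t)\sum_{k=1}^{l}\sigma_{i,k}(t,x_t)\sigma_{j,k}(t,x_t)\right]dt+\nabla F(x_t)\cdot\sigma(t,x_t)\,dW_t.
\end{equation*}
Strong invariance of $M$ is equivalent to $F(x_t)=F(x_0)=0$ a.s. for all $t\geq 0$, i.e.\ the semimartingale $F(x_t)$ is identically zero, which forces both its finite-variation (drift) part and its martingale (diffusion) part to vanish.

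For the sufficiency direction, I would assume the three conditions hold pointwise on $M$. Given $x_0\in M$, continuity of the paths lets me argue that as long as $x_t\in M$ the drift bracket and the diffusion coefficient in the display above both vanish, so $F(x_t)$ is locally constant and equal to $0$; a standard continuity/stopping-time argument then extends this to all $t$ and gives strong invariance.

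For the necessity direction, assume $M$ is strongly invariant. Then $F(x_t)\equiv 0$ almost surely, so the quadratic variation $\langle F(x_\cdot)\rangle_t=\int_0^t\bigl|\nabla F(x_s)\cdot\sigma(s,x_s)\bigr|^2 ds$ is identically zero. This forces $\nabla F(x_s)\cdot\sigma(s,x_s)=0$ a.s.\ along the trajectory, and since one can start the diffusion from any $x_0\in M$, one recovers $\nabla F(x)\cdot\sigma(t,x)=0$ for every $x\in M$ and $t\geq 0$. Once the diffusion part is zero, the drift integrand must also vanish; combined with the deterministic invariance hypothesis $\nabla F(x)\cdot f(t,x)=0$ on $M$, this leaves precisely the Hessian condition \eqref{sc}.

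The main obstacle is the necessity direction, specifically turning the a.s.\ pointwise-in-time identities that hold along a single trajectory into conditions that hold at every $x\in M$. The clean way to do this is to exploit the freedom of the initial condition $x_0\in M$ together with continuity of $\sigma$, $f$ and the second derivatives of $F$: for any fixed $x_\star\in M$ and $t_\star\geq 0$ one picks $x_0=x_\star$ and evaluates the identities at $t=0$, which eliminates the stochastic integral and reduces the claim to the pointwise statement. A minor technical point to handle is making sure that the usual Lipschitz/growth hypotheses on $f$ and $\sigma$ (stated earlier in the paper) give a well-defined solution starting from each $x_\star\in M$, so that this evaluation argument is legitimate.
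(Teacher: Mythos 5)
Your proof follows essentially the same route as the paper's: apply the multi-dimensional It\^o formula to $F(x_t)$, use the deterministic invariance hypothesis to eliminate the $\nabla F\cdot f$ term, and identify strong invariance with the simultaneous vanishing of the remaining martingale part and drift part. Your handling of the necessity direction (quadratic variation to isolate the diffusion coefficient, then varying the initial point $x_0\in M$ to convert along-the-trajectory identities into pointwise conditions on $M$) is in fact more careful than the paper's, which simply reads the two conditions off the It\^o decomposition without further argument.
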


\begin{proof}
The essential tool in this case is the It\^o formula that will help us to formulate the invariance condition. Indeed, a process $x_{t}$ leaves the submanifold $M$ invariant if and only if for all initial condition $x_{0}\in M$ a.s, the stochastic process associated to $x_{t}$ satisfies $F(x_{t})=0$ for all $t$ almost surely where it is defined.\\

The multidimensional It\^o formula reads as
$$d[F(x_{t})]=\nabla F(x_{t})dx_{t}+\frac{1}{2}\sum_{i,j}\frac{\partial^{2}F}{\partial x_{i}\partial x_{j}}(x_{t})dx_{i}(t)dx_{j}(t).$$
So we obtain
$$d[F(x_{t}]=\nabla F(x_{t})f(t,x_t)dt + \nabla F(x_{t})\sigma(t,x_{t})dW_t+\frac{1}{2}\sum_{i,j}\frac{\partial^{2}F}{\partial x_{i}\partial x_{j}}(x_{t})\sum^{l}_{k=1}\sigma_{i,k}(t,x_{t})\sigma_{j,k}(t,x_{t})dt.$$ 
The gradient of $F$  being always normal to the tangent space of $M$, we have $\nabla F(x_{t})\cdot f (t,x_{t})=0$ since the manifold $M$ is assumed to be invariant in the deterministic case. It remains
\begin{equation}
\label{ifF}
 d[F(x_{t}]=\nabla F(x_{t})\sigma(t,x_{t})dW_{t}+\frac{1}{2}\sum_{i,j}\frac{\partial^{2}F}{\partial x_{i}\partial x_{j}}(x_{t})\sum^{l}_{k=1}\sigma_{i,k}(t,x_{t})\sigma_{j,k}(t,x_{t})dt.
 \end{equation}
The only contribution to the stochastic part is given by $ \nabla F(x_{t})\sigma(t,x_{t}) $ and is equal to zero if and only if the perturbation $\sigma$ satisfies the invariance condition \eqref{IFF}. Then the previous expression reduces to:
\begin{equation}
\label{itoderiva}
d[F(x_{t}]= \frac{1}{2}\sum_{i,j}\frac{\partial^{2}F}{\partial x_{i}\partial x_{j}}(x_{t})\sum^{l}_{k=1}\sigma_{i,k}(t,x_{t})\sigma_{j,k}(t,x_{t})dt.
\end{equation}
that give us the third conditional.\\

If we assume that the stochastic perturbation take the simplest case, where $\sigma_{i,j}=\delta_{i}^{j},$ we get the condition 
$$ \sum_{i=1}^{n}\frac{\partial^{2}F}{\partial x_{i}^2}(x_{t})\sigma_{i,i}^{2}(t,x_{t})=0,\;\forall(t,x)\in \mathbb{R}^+\times M. $$
\end{proof}

The previous Theorem indicates that unless a very specific form for $\sigma$ and $F$, there is no hope to recover invariance of a given manifold using a direct stochastic perturbation of a deterministic equation in the It\^o case. 
  
\subsection{Explicit conditions for invariance - Stratonovich case}
 
We generalize the previous result in the Stratonovich case.
 
\begin{theorem}[Stratonovich stochastic invariance]
\label{sis}
The submanifold $M$ is invariant under the flow of the stochastic system in the Stratonovich sense if and only if
\begin{equation}
f(t,x)\in T_{x}M \text{ and }  \sigma(t,x)\in T_{x}M ,\text{ for all } (t,x)\in \mathbb{R}^+ \times M .
\end{equation}
\end{theorem}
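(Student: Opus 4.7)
The plan is to exploit the defining feature of the Stratonovich integral, namely that it obeys the classical chain rule (as recalled in the conversion formula and the Stratonovich It\^o-type formula stated earlier). This turns the invariance analysis for $F(x_t)$ into essentially the same computation as in the ODE case \eqref{IFF}, with an extra stochastic term appearing linearly instead of the quadratic correction that obstructed the It\^o case in Theorem \ref{invarianceito}.

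The first step is to apply the Stratonovich chain rule to $F(x_t)$ where $M = \{x : F(x)=0\}$, which gives
\begin{equation*}
d[F(x_t)] = \nabla F(x_t) \cdot dx_t = \nabla F(x_t) \cdot f(t, x_t)\, dt + \nabla F(x_t) \cdot \sigma(t, x_t) \circ dW_t.
\end{equation*}
Sufficiency is then immediate: if $f(t,x) \in T_x M$ and $\sigma(t,x) \in T_x M$ for all $(t,x) \in \mathbb{R}^+ \times M$, then since $\nabla F(x)$ is normal to $T_x M$ both integrands vanish along any trajectory starting in $M$, so $d[F(x_t)] = 0$ and $F(x_t) = F(x_0) = 0$ for all $t \geq 0$.

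For the necessity direction, the plan is to convert the Stratonovich equation to its It\^o form using the Wong--Zakai correction \eqref{strato-ito}, so that $F(x_t)$ becomes a continuous semimartingale admitting a unique decomposition into a finite-variation drift and a local martingale. If $F(x_t) \equiv 0$ almost surely, both components must vanish identically. Vanishing of the martingale part forces $\nabla F(x_t) \cdot \sigma(t, x_t) = 0$ a.s., and vanishing of the drift part then forces $\nabla F(x_t) \cdot f(t, x_t) = 0$ a.s.\ along the trajectory. Identifying $\nabla F(x)^\perp$ with $T_x M$ yields the pointwise geometric conclusion on $M$.

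The main obstacle is the step of upgrading an almost-sure identity along a single trajectory into a pointwise statement valid on all of $M$. The natural route is to vary the initial condition $x_0$ over a dense countable subset of $M$, combine with continuity of $f$ and $\sigma$, and use that for short times trajectories remain in an arbitrarily small neighborhood of $x_0$ on $M$; alternatively, one can argue sample-path by sample-path using right continuity at $t=0$ so that the conditions $\nabla F(x_0)\cdot f(0,x_0)=0$ and $\nabla F(x_0)\cdot \sigma(0,x_0)=0$ are read off directly as limits, and then translate in time.
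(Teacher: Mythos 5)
Your proof follows essentially the same route as the paper: apply the Stratonovich chain rule to $F(x_t)$ and read off that both the drift term $\nabla F\cdot f$ and the diffusion term $\nabla F\cdot\sigma$ must vanish on $M$. The only difference is that you justify the necessity direction (via the unique semimartingale decomposition after Wong--Zakai conversion, plus a continuity/density argument to pass from almost-sure statements along trajectories to pointwise statements on $M$), whereas the paper simply asserts that the sum vanishes if and only if each term does; your added detail is correct and fills a step the paper leaves implicit.
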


\begin{proof}
As the Stratonovich stochastic calculus behaves as the usual differential calculus, we obtain 
$$d\left[ F(x_{t}\right] =\nabla F(x_{t})\cdot f(t,x_{t})dt+\nabla F(x_{t})\cdot \sigma(t,x_{t})\circ dW_{t}.$$
This quantity is zero if and only if $\nabla F(x_{t})\cdot f(t,x_{t})=0$ and $\nabla F(x_{t})\cdot \sigma(t,x_{t})=0$, which concludes the proof.
\end{proof}

The Stratonovich situation is then very close to the usual deterministic case.

\subsection{Stochastic invariance of the sphere $S^2$ in $\R^3$}

We can specialize this result in the case of the sphere which will be important to study the invariance property of Landau-Lifshitz equation.

\begin{corollary}
\label{spherei}
The sphere $ S$ is invariant under the flow of the stochastic system \eqref{IE} if and only if the stochastic perturbation is null on the sphere i.e.,
$$\sigma_{i,i}(t,x)=0, \quad i=1,...,n\text{ for all }\ t\in \mathbb{R}^+\text{ and }\ x\in S^{n-1} .$$
\end{corollary}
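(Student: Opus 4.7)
The corollary is a direct specialization of Theorem \ref{invarianceito} to the case $M = S^{n-1}$, so the plan is to pick a convenient defining function for the sphere, compute its gradient and Hessian, and read off the two invariance conditions.

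First I would write $S^{n-1} = \{x\in\R^n : F(x) = 0\}$ with $F(x) = \|x\|^2 - 1 = \sum_{i=1}^n x_i^2 - 1$. A direct computation gives $\nabla F(x) = 2x$ and $\partial^2 F/\partial x_i\partial x_j = 2\delta_{ij}$. Theorem \ref{invarianceito} then requires, in addition to the deterministic tangency $\nabla F(x)\cdot f(t,x)=0$ on the sphere (which is an assumption on the drift), the two stochastic conditions
\[
\nabla F(x)\cdot \sigma(t,x) = 0 \quad\text{and}\quad \sum_{i,j=1}^n \frac{\partial^2 F}{\partial x_i\partial x_j}(x) \sum_{k=1}^l \sigma_{i,k}(t,x)\sigma_{j,k}(t,x) = 0
\]
for all $t\ge 0$ and $x\in S^{n-1}$.

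The key step is to observe that the Hessian of $F$ is a positive multiple of the identity, so the second condition collapses to
\[
2\sum_{i=1}^n \sum_{k=1}^l \sigma_{i,k}(t,x)^2 = 0 .
\]
Being a sum of squares, this forces $\sigma_{i,k}(t,x) = 0$ for all indices $i,k$ (in particular the diagonal entries $\sigma_{i,i}$ stated in the corollary) whenever $x \in S^{n-1}$. Once $\sigma$ vanishes on the sphere, the orthogonality condition $\nabla F(x)\cdot \sigma(t,x) = 0$ is automatic. For the converse, vanishing of $\sigma$ on $S^{n-1}$ makes both conditions of Theorem \ref{invarianceito} trivially true, so strong invariance follows immediately.

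There is essentially no obstacle here, as the corollary is only an unpacking of the general theorem. The one point worth emphasizing is that because the Hessian of $F$ is definite on $S^{n-1}$, the second-order It\^o correction cannot cancel internally, forcing the entire diffusion matrix to vanish on the sphere rather than merely being tangent to it. This rigidity is precisely what makes direct It\^o perturbations incompatible with sphere-constrained dynamics such as the Landau-Lifshitz equation, motivating the invariantization and RODE approaches developed later in the paper.
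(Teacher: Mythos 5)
Your proof is correct and follows essentially the same route as the paper: specialize Theorem \ref{invarianceito} to $F(x)=\sum_{i=1}^n x_i^2$, note that the Hessian is twice the identity, and observe that condition (\ref{sc}) becomes a sum of squares that must vanish. In fact your version is slightly more careful than the paper's, which writes the reduced condition only as $\sum_{i=1}^{n}[\sigma_{i,i}(t,x_t)]^{2}=0$ (implicitly treating $\sigma$ as diagonal), whereas your computation correctly shows that the full sum $\sum_{i,k}\sigma_{i,k}(t,x)^2=0$ forces every entry of the diffusion matrix, not just the diagonal ones, to vanish on the sphere.
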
 

\begin{proof}
The proof follows from the fact that $F(x)=\di\sum_{i=1}^n x_i^2$ so that condition \ref{sc} reduces to
\begin{equation}
\sum_{i=1}^{n}[\sigma_{i,i}(t,x_{t})]^{2}=0,\;\forall(t,x)\in \mathbb{R}^+\times S .
\end{equation}
This concludes the proof.
\end{proof}
  
An easy consequence of theorem\eqref{sis} is the following "explicit" criteria for the sphere in the Stratonovich case:

\begin{corollary} 
\label{spheres}
The sphere $S$ is invariant under the flow of the stochastic system \eqref{SE} if and only if  
$$x\cdot f(t,x)= x\cdot \sigma(t,x)=0, \;\forall(t,x)\in \mathbb{R}^+\times S.$$
\end{corollary}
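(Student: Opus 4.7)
The plan is to invoke Theorem \ref{sis} directly, taking the sphere as the zero set of the quadratic function $F(x) = \|x\|^2 - 1$ (or, more generally if $S$ has radius $r$, $F(x)=\|x\|^2-r^2$). Since $F$ is smooth, its gradient at any point on $S$ is $\nabla F(x) = 2x$, which up to a nonzero scalar is the outward normal vector to the sphere at $x$. Thus the tangent hyperplane condition ``$v \in T_x S$'' is equivalent to the algebraic condition ``$x \cdot v = 0$''.

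Applying Theorem \ref{sis} to this specific $F$, the manifold $S$ is invariant under the flow of the Stratonovich system \eqref{SE} if and only if both the drift $f(t,x)$ and the diffusion $\sigma(t,x)$ belong to $T_x S$ for every $(t,x) \in \mathbb{R}^+ \times S$. Rewriting these two conditions using $\nabla F(x) = 2x$, we obtain $2x \cdot f(t,x) = 0$ and $2x \cdot \sigma(t,x) = 0$, which after dividing by $2$ give exactly the stated identities $x \cdot f(t,x) = x \cdot \sigma(t,x) = 0$.

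There is no real obstacle here; the corollary is purely a specialization of the Stratonovich invariance criterion to the simplest possible defining function. The only minor care needed is to interpret the condition $\sigma(t,x) \in T_x S$ columnwise when $\sigma$ is matrix-valued, i.e.\ each column $\sigma_{\cdot,k}(t,x)$ must satisfy $x \cdot \sigma_{\cdot,k}(t,x) = 0$; this is precisely what the vectorial equality $x \cdot \sigma(t,x) = 0$ means in the notation of the paper, so no additional argument is required.
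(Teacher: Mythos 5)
Your proof is correct and follows essentially the same route as the paper: both invoke Theorem \ref{sis} and reduce the tangency conditions $f(t,x)\in T_xS$ and $\sigma(t,x)\in T_xS$ to the scalar conditions $x\cdot f(t,x)=0$ and $x\cdot\sigma(t,x)=0$ via the observation that the normal direction to the sphere at $x$ is $x$ itself (in your version, $\nabla F(x)=2x$). Your added remark about interpreting $x\cdot\sigma(t,x)=0$ columnwise for matrix-valued $\sigma$ is a sensible clarification the paper leaves implicit.
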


\begin{proof}
This follows from the fact that a vector $N$ is normal to the tangent plane of $S^2$ at point $x$ if and only if $N\wedge x=0$. As a consequence, the vector $f(t,x)$ and $\sigma (t,x)$ belongs to the tangent manifold $T_x S^2$ at point $x$ if and only if 
$x \cdot f(t,x)=0$ and $x\cdot \sigma (t,x)=0$.
\end{proof}

\end{appendix}

\end{document}